\DeclareMathAlphabet{\mathpzc}{OT1}{pzc}{m}{it}
\newtheorem{defi}{Definition}[section]
\newtheorem{theo}[defi]{Theorem}
\newtheorem{prop}[defi]{Proposition}
\newtheorem{coro}[defi]{Corollary}
\newtheorem{lemm}[defi]{Lemma}
\newtheorem{exam}[defi]{Example}
\newtheorem{rema}[defi]{Remark}
\title{E$_0$-semigroups and product systems of W$^*$-bimodules}
\author{Yusuke Sawada}
\subjclass[2010]{Primary~46L55, 46L10}
\keywords{E$_0$-semigroups, CP$_0$-semigroups, dilations, product systems, W$^*$-bimodules}
\address{Graduate school of mathematics, Nagoya University, Chikusaku, Nagoya, 464-8602, Japan}
\email{m14017c@math.nagoya-u.ac.jp}
\begin{document}
\begin{abstract}
Product systems have been originally
introduced to classify E$_0$-semigroups on type I factors by Arveson. We develop the classification theory of E$_0$-semigroups on a general von Neumann algebra and the dilation theory of CP$_0$-semigroups in terms of W$^*$-bimodules. For this, we provide a notion of product system of W$^*$-bimodules. This is a W$^*$-bimodule version of Arveson's and Bhat-Skeide's product systems. There exists a one-to-one correspondence between CP$_0$-semigroups and units of product systems of W$^*$-bimodules. The correspondence implies a construction of a dilation of a given CP$_0$-semigroup, a classification of E$_0$-semigroups on a von Neumann algebra up to cocycle equivalence and a relationship between Bhat-Skeide's and Muhly-Solel's constructions of minimal dilations of CP$_0$-semigroups.
\end{abstract}
\maketitle
\section{Introduction}
E$_0$-semigroups naturally arise in the quantum field theory. An E$_0$-semigroup is a semigroup of normal $*$-endomorphisms on a von Neumann algebra with $\sigma$-weak continuity, and the study of E$_0$-semigroups have been initiated by Powers in \cite{powe88}. In \cite{arve89}, Arveson has provided the notion of product system and associated a product system with an E$_0$-semigroup on a type I factor. A product system $\{\mathcal{H}_t\}_{t>0}$ is a measurable family of Hilbert spaces $\mathcal{H}_t$ parameterized by positive real numbers equipped with isomorphisms $\mathcal{H}_s\otimes\mathcal{H}_t\cong\mathcal{H}_{s+t}$ with the associativity. Note that this is not his original definition, however they are the essentially same (see \cite{lieb09}). He also classified E$_0$-semigroups on type I factors by product systems up to cocycle conjugacy. E$_0$-semigroups on type I factors are roughly divided into type I, II and III by units of associated product systems. Every product systems have a numerical index and type I E$_0$-semigroups and product systems are completely classified by their indexes. We refer the reader to his monograph \cite{arve03} for the physical background of E$_0$-semigroups and the theory of product systems. The theory of E$_0$-semigroups on von Neumann algebras which are not type I factors, has often been developed in terms of Hilbert modules. A Bhat-Skeide's product system introduced in \cite{bhat-skei00} is a family $\{E_t\}_{t\geq0}$ of Hilbert bimodules over a C$^*$-algebra satisfying a similar property with Arveson's one with respect to tensor products of Hilbert bimodules. They classified E$_0$-semigroups on a C$^*$-algebra by their product systems up to cocycle equivalence. In \cite{alev04}, Alevras has associated a product system of Hilbert bimodules with each E$_0$-semigroup on a II$_1$ factor by a different way from Bhat-Skeide's one and they form a complete invariant. In Skeide's monograph \cite{skei16}, we have the classification theory of E$_0$-semigroups on the algebra $\mathcal{B}^a(E)$ of all adjointable right $A$-linear maps on a Hilbert (von Neumann) $A$-module $E$. On the other hand, Margetts-Srinivasan have introduced other invariants of E$_0$-semigroups on II$_1$ factors in \cite{marg-srin13}, and they have investigated non-cocycle conjugate E$_0$-semigroups on factors in \cite{marg-srin17} by using the modular conjugation of Tomita-Takesaki theory.

A CP$_0$-semigroups is a $\sigma$-weakly continuous semigroup of normal completely positive maps on a von Neumann algebra. The theory of Arveson's product systems influenced the constructions of minimal dilations of CP$_0$-semigroups. Roughly speaking, a dilation of a CP$_0$-semigroup is an extension of it to an E$_0$-semigroup in a suitable sense. Stinespring's dilation theorem can not be applied to CP$_0$-semigroups, and some researchers have shown an existence of the minimal dilation of a given CP$_0$-semigroup gradually. In \cite{bhat96} and \cite{bhat99}, Bhat has shown it in the cases when $M$ is $\mathcal{B}(\mathcal{H})$ and a C$^*$-algebra, respectively, in which we do not assume the $\sigma$-weakly continuity for semigroups. In \cite{bhat-skei00}, Bhat-Skeide constructed minimal dilations by a method which is valid for both of the von Neumann algebra case and the C$^*$-algebra case. Also, we know Muhly-Solel's (\cite{muhl-sole02}) and Arveson's (\cite{arve03}) constructions, which differ from each other, of the minimal dilation of a CP$_0$-semigroup on a von Neumann algebra. It is more difficult to construct an example of E$_0$-semigroups than CP$_0$-semigroups in general, however the existence of (minimal) dilations gives rise to E$_0$-semigroups from CP$_0$-semigroups. This is one of benefits of the dilation theory. Also, in \cite{sawa17}, we have clarified a direct relationships between Bhat-Skeide's and Muhly-Solel's constructions of the minimal dilation of a discrete CP$_0$-semigroup, which is different from one described by Skeide's commutant duality in \cite{skei03} and \cite{skei06}.

There have been no approaches to the classification theory of E$_0$-semigroups on a von Neumann algebra and the dilation theory of CP$_0$-semigroups by the W$^*$-bimodule (which is not von Neumann bimodule) theory. In this paper, we attempt to give a W$^*$-bimodule approach to their field by a way reflected by Bhat-Skeide's works in \cite{bhat-skei00}. 

We give an outline of this paper. We will recall the notions of W$^*$-bimodule, relative tensor product, CP$_0$-semigroup and E$_0$-semigroup in Section \ref{Preliminaries}.

In Section \ref{Product systems of W$^*$-bimodules}, we will provide a concept of product system of W$^*$-bimodules, where adopted tensor products are relative tensor product introduced by Connes\cite{conn94}. This is a direct extension of Arveson's product system. A unit $\Xi$ of a product system $H$ of W$^*$-$M$-bimodules induces an E$_0$-semigroup on ${\rm
End}(\mathfrak{H}_M)$, where $\mathfrak{H}$ is the inductive limit of $H$ with respect to parameters. The E$_0$-semigroup is called the dilation of the pair $(H,\Xi)$. We prove that cocycles of the dilation of the pair $(H,\Xi)$ and units of $H$ are the essentially same.

In Section \ref{CP$_0$-semigroups and units of product systems of W$^*$-bimodules}, we will find a one-to-one correspondence between CP$_0$-semigroups on a von Neumann algebra $M$ and pairs of product systems of W$^*$-$M$-bimodules and units up to unit preserving isomorphism. The correspondence enables as to translate the $\sigma$-weak continuity of CP$_0$-semigroups into a continuity of units. Also, the dilation of the pair associated with a given CP$_0$-semigroup $T$, gives a dilation of $T$. The product system of W$^*$-bimodules associated with a CP$_0$-semigroup $T$ describes a relation between Bhat-Skeide's and Muhly-Solel's constructions of the minimal dilation of $T$. This is an extension to the continuous case of the relation in the discrete case in \cite{sawa17}. Some relationships among the two constructions and Arveson's construction have not been clarified yet.

In Section \ref{Heat semigroups on manifolds and product systems}, we consider the product system associated the heat semigroup $\{e^{t\Delta}\}_{t\geq0}$ given by the Laplacian $\Delta$ on a compact Riemannian manifold, and its dilation by the method in Section $4$, as an example. We will show that the W$^*$-bimodules appearing in the construction of the product system associated with $\{e^{t\Delta}\}_{t\geq0}$ are realized as $L^2$-spaces with respect to measures given by the heat kernel. We will reconstruct the dilation in more detail under this identification. 

We can get the product system $H^\alpha$ of W$^*$-bimodules (and the unit) from an E$_0$-semigroup $\alpha$ on a von Neumann algebra $M$ as CP$_0$-semigroups by the above correspondence. We will classify E$_0$-semigroups on $M$ by product systems of W$^*$-bimodules: two E$_0$-semigroups $\alpha$ and $\beta$ on $M$ are cocycle equivalent if and only if $H^\alpha\cong
H^\beta$ in Section \ref{Classification of E$_0$-semigroups}. Hence, this enables as to classify E$_0$-semigroups up to cocycle conjugacy by product systems of W$^*$-bimodules. Also, we will get a unit of a given E$_0$-semigroup $\theta$ on II$_1$ factor from a unit of the product system $H^\theta$ associated with $\theta$.

\section{Preliminaries}\label{Preliminaries}
In this section, we recall the notions of W$^*$-bimodule, relative tensor product, CP$_0$-semigroup, E$_0$-semigroup, tensor product related to CP$_0$-semigroup and partition, which will be used in the later sections.

W$^*$-bimodules are Hilbert spaces on which von Neumann algebras act from the left and the right. More precisely, for von Neumann algebras $N$ and $M$, a Hilbert space $\mathcal{H}$ with normal $*$-representations of $N$ and the opposite von Neumann algebra $M^\circ$ of $M$ is a W$^*$-$N$-$M$-bimodule if their representations commute. When $N=\mathbb{C}$ or $M=\mathbb{C}$, we call $\mathcal{H}$ a right W$^*$-$M$-module or a left W$^*$-$N$-module, respectively. We write a W$^*$-$N$-$M$-bimodule, a right W$^*$-$M$-module and a left W$^*$-$N$-module by ${}_N\mathcal{H}_M,\ \mathcal{H}_M$ and ${}_N\mathcal{H}$, respectively.

Let $N$ be a von Neumann algebra, $\mathcal{H}_N$ and $\mathcal{K}_N$ be right W$^*$-$N$-modules, and ${}_N\mathcal{H}'$ and ${}_N\mathcal{K}'$ be left W$^*$-$N$-modules. $\mbox{\rm{Hom}}(\mathcal{H}_N,\mathcal{K}_N)$ and $\mbox{\rm{Hom}}({}_N\mathcal{H}',{}_N\mathcal{K}')$ are the sets of all right and left $N$-linear bounded maps, respectively. If $\mathcal{H}=\mathcal{K}$ and $\mathcal{H}'=\mathcal{K}'$, they are denoted by $\mbox{\rm{End}}(\mathcal{H}_N)$ and $\mbox{\rm{End}}({}_N\mathcal{H}')$, respectively.

We denote the standard space of a von Neumann algebra $M$ by $L^2(M)$. The standard space $L^2(M)$ contains all left and right GNS-spaces and we have $[\phi]\overline{M\psi^\frac{1}{2}}=\overline{\phi^\frac{1}{2}M}[\psi]$ in $L^2(M)$ and $\overline{\phi^\frac{1}{2}M}=[\phi]L^2(M)$ for all $\phi,\psi\in
M_*^+$. In particular, we have $\overline{\phi^\frac{1}{2}M}=L^2(M)=\overline{M\phi^\frac{1}{2}}$ for each faithful $\phi\in
M_*$. This observation will be helpful under the assumption which a von Neumann algebra has a faithful normal state in the later sections. We refer the reader to \cite[Chapter IX]{take03}, \cite{yama14}, \cite{yama92} and \cite{yama94} for details of the definition and properties of standard spaces included in the modular theory.

Now, we shall recall (left) relative tensor products. For more details, see \cite[Chapter 5, Appendix B]{conn94}, \cite{sauv83} or \cite[Chapter IX, Section 3]{take03}. Suppose $\mathcal{H}$ is a W$^*$-$M$-$N$-bimodule and $\mathcal{K}$ is a W$^*$-$N$-$P$-bimodule. Let $\phi$ be a faithful normal state on $N$. A vector $\xi\in\mathcal{H}$ is called a (left) $\phi$-bounded vector if there is $c>0$ such that $\|\xi
x\|\leq
c\|\phi^{\frac{1}{2}}
x\|$ for all $x\in
M$. We denote the set of all $\phi$-bounded vectors in $\mathcal{H}$ by $\mathcal{D}(\mathcal{H};\phi)$. The (left) relative tensor product $\mathcal{H}\otimes_\phi^N\mathcal{K}$ is the completion $\overline{\mathcal{D}(\mathcal{H};\phi)\otimes_{{\rm
alg}}\mathcal{K}}$ with respect to an inner product defined by
\[
\langle\xi_1\phi^{-\frac{1}{2}}\eta_1,\xi_2\phi^{-\frac{1}{2}}\eta_2\rangle=\langle\eta_1,\pi_\phi(\xi_1)^*\pi_\phi(\xi_2)\eta_2\rangle,
\]
for each $\xi_1,\xi_2\in\mathcal{D}(\mathcal{H};\phi)$ and $\eta_1,\eta_2\in\mathcal{K}$, where $\pi_\phi(\xi):L^2(N)\ni\phi^\frac{1}{2}x\to\xi
x\in\mathcal{H}$ and we usually use a notation $\xi\phi^{-\frac{1}{2}}\eta$ rather than $\xi\otimes\eta$. Also, we can define the right relative tensor product by right $\phi$-bounded vectors. 
\begin{rema}
Left and right relative tensor products can be defined by the way which is independent on a choice of $\phi$ {\rm(}see \cite{bail-deni-have88}{\rm)}. If we denote the left and the right relative tensor product by $\mathcal{H}\otimes_l^N\mathcal{K}$ and $\mathcal{H}\otimes_r^N\mathcal{K}$, respectively for W$^*$-bimodules ${}_M\mathcal{H}_N$ and ${}_N\mathcal{K}_P$, we already know the W$^*$-bimodule isomorphism $\mathcal{H}\otimes_l^N\mathcal{K}\cong\mathcal{H}\otimes_r^N\mathcal{K}$. In \cite{sawa-yama17}, we have constructed the isomorphism $\mathcal{H}\otimes_l^N\mathcal{K}\cong\mathcal{H}\otimes_r^N\mathcal{K}$ by the canonical way, and shown that the two W$^*$-bicategories of W$^*$-bimmodules with left and right tensor products as tensor functors are monoidally equivalent.
\end{rema}

Now, we provide the basic notions related with CP$_0$-semigroups and E$_0$-semigroups. A family $T=\{T_t\}_{t\geq0}$ of normal UCP-maps $T_t$ on a von Neumann algebra $M$ is called a CP$_0$-semigroup if $T_0={\rm
id}_M,\ T_sT_t=T_{s+t}$ for all $s,t\geq0$, and for every $x\in
M$ and $\phi\in
M_*$, the function $\phi(T_t(x))$ on $[0,\infty)$ is continuous. If each $T_t$ is a $*$-homomorphism, $T$ is called an E$_0$-semigroup. A CP$_0$-semigroup (E$_0$-semigroup) without the continuity is called an algebraic CP$_0$-semigroup (algebraic E$_0$-semigroup, respectively).
\begin{exam}\label{cpexam0}
Let $\{v_t\}_{t\geq0}$ be a family of isometries $v_t$ in a von Neumann algebra $M$ such that $v_{s+t}=v_sv_t$ for all $s,t\geq0$ and $v_0=1_M$. Suppose $\{v_t\}_{t\geq0}$ is strongly continuous with respect to the parameter. If we define $T=\{T_t\}_{t\geq0}$ by $T_t(x)=v_t^*xv_t$ for each $x\in
M$ and $t\geq0$, then $T$ is a CP$_0$-semigroup. If each $v_t$ is unitary, $T$ is an E$_0$-semigroup. 
\end{exam}
\begin{exam}\label{heat}
The CCR heat flow is a CP$_0$-semigroup $T$ which has the noncommutative Laplacian $\Delta$ as generators. This will be immediately and concretely defined by the Weyl system. For more details, see {\rm\cite[Section 7]{arve03}}. 

Let $\mathcal{H}=L^2(\mathbb{R})$ and $M=\mathcal{B}(\mathcal{H})$. For ${\bf
x}=(x,y)\in\mathbb{R}^2$, the concrete Weyl operator is $W_{\bf
x}=\exp(\frac{xy}{2}i)U_xV_y$, where $\{U_x\}_{x\in\mathbb{R}}$ and $\{V_x\}_{x\in\mathbb{R}}$ are the unitary groups which have the position operator $Q$ and the momentum operator $P$ as generators, respectively, i.e. 
\[
(U_tf)(x)=e^{itx}f(x),\ (V_tf)(x)=f(x+t)
\]
for $f\in
L^2(\mathbb{R})$ and $t,x\in\mathbb{R}$. Then, the family $\{W_{\bf
x}\}_{{\bf
x}\in\mathbb{R}^2}$ of the unitaries satisfies the Weyl relations
\begin{equation}\label{Weyl}
W_{{\bf
x}_1}W_{{\bf
x}_2}=\exp\left(\frac{i}{2}(x_2y_2-x_1y_2)\right)W_{{\bf
x}_1+{\bf
x}_2}
\end{equation}
for ${\bf
x}_1=(x_1,y_1),{\bf
x}_2=(x_2,y_2)\in\mathbb{R}^2$. The CCR heat flow is defined as the unique CP$_0$-semigroup $T=\{T_t\}_{t\geq0}$ on $M$ satisfying $T_t(W_{\bf
x})=\exp(-t\|x\|^2)W_{\bf
x}$ for all ${\bf
x}\in\mathbb{R}^2$ and $t\geq0$. More precisely, we define $T_t$ for $t\geq0$ by a weak integral $T_t(x)=\int_{\mathbb{R}^2}W_{\frac{{\bf
x}}{\sqrt{2}}}xW_{\frac{{\bf
x}}{\sqrt{2}}}^*d\mu_t({\bf
x})$ for each $x\in
M$, where $\mu_t$ is the probability measure whose Fourier transformation is $u_t({\bf
x})=\exp(-t\|{\bf
x}\|^2)$. 
\end{exam}

According to Stinespring's dilation theorem, for a UCP-map $T$ from a C$^*$-algebra $A$ into $\mathcal{B}(\mathcal{H})$, there exist a Hilbert space $\mathcal{K}$, a unital representation of $A$ on $\mathcal{K}$ and an isometry $v:\mathcal{H}\to\mathcal{K}$ such that $T(a)=v^*\pi(a)v$ for all $a\in
A$. However, Stinespring's theorem does not apply to CP$_0$-semigroup. The notion of dilation of CP$_0$-semigroups are introduced as follows:
\begin{defi}\label{defdilation}
Let $T=\{T_t\}_{t\geq0}$ be a CP$_0$-semigroup on a von Neumann algebra $M$. A dilation of $T$ consists of a von Neumann algebra $N$, a projection $p\in
N$ and an E$_0$-semigroup $\{\theta_t\}_{t\geq0}$ on $N$ such that $M=pNp$ and $T_t(x)=p\theta_t(x)p$ for all $x\in
M$ and $t\geq0$. In addition, if $N$ is generated by $\theta_{[0,\infty)}(M)$ and the central support of $p$ in $N$ is $1_N$, the dilation is said to be minimal.
\end{defi}
Note that a minimal dilation of a CP$_0$-semigroup is unique (if it exists). The existence of minimal dilations is proved by Bhat-Skeide and Muhly-solel. In Section \ref{CP$_0$-semigroups and units of product systems of W$^*$-bimodules}, a relation between the two constructions will be clarified. Arveson also constructed the minimal dilation by other approach in \cite{arve03} (or \cite{arve02}).

The notion of cocycle for E$_0$-semigroups which is useful for the classification of E$_0$-semigroups, is introduced as the following definition.
\begin{defi}
Let $\theta$ be an E$_0$-semigroup on a von Neumann algebra $M$. A family $w=\{w_t\}_{t\geq0}\subset
M$ is called a right cocycle for $\theta$ if $w_{s+t}=\theta_t(w_s)w_t$ for all $s,t\geq0$. If each $w_t$ is unitary {\rm(}contractive{\rm)}, then $w$ is called a right unitary {\rm(}contractive, respectively{\rm)} cocycle.
\end{defi}
\begin{defi}
Two E$_0$-semigroups $\alpha$ and $\beta$ on a von Neumann algebra $M$ are said to be cocycle equivalent if there exists a strongly continuous right unitary cocycle $w$ such that $\beta_t(x)=w_t^*\alpha_t(x)w_t$ for all $t\geq0$ and $x\in
M$. Then, the E$_0$-semigroup $\beta$ is called the cocycle perturbation of $\alpha$ with respect to $w$.

Let $\alpha$ and $\beta$ be E$_0$-semigroups on von Neumann algebras $M$ and $N$, respectively, and $\Phi:M\to
N$ is a $*$-isomorphism. The conjugation $\beta^\Phi$ of $\beta$ with respect to $\Phi$ is an E$_0$-semigroup on $M$ defined by $\beta_t^\Phi=\Phi^{-1}\circ\beta_t\circ\Phi$ for each $t\geq0$. If $\beta^\Phi$ is a cocycle perturbation of $\alpha$, we say $\alpha$ and $\beta$ are cocycle conjugate.
\end{defi}

We will establish a product system of W$^*$-bimodule from a given CP$_0$-semigroup in Section \ref{CP$_0$-semigroups and units of product systems of W$^*$-bimodules}. For this, we prepare a W$^*$-bimodule equipped with a information of a given normal UCP-map as follows. Let $T$ be a normal UCP-map on a von Neumann algebra $M$ and $\mathcal{H}$ a W$^*$-$M$-$N$-bimodule. We define $M\otimes_T\mathcal{H}$ as the completion of the algebraic tensor product $M\otimes_{{\rm
alg}}\mathcal{H}$ with respect to an inner product defined by
\[
\langle
x\otimes\xi,y\otimes\eta\rangle=\langle\xi,T(x^*y)\eta\rangle
\]
for each $x,y\in
M$ and $\xi,\eta\in\mathcal{H}$. The Hilbert space $M\otimes_T\mathcal{H}$ has the canonical W$^*$-$M$-$N$-bimodule structure: $a(x\otimes\xi)b=(ax)\otimes(\xi
b)$ for each $a,x\in
M,\ b\in
N$ and $\xi\in\mathcal{H}$. When $\mathcal{H}=L^2(M)$, we can provide the following formula related to the relative tensor products and normal UCP-maps, which will be used for computing inner products in later arguments.
\begin{prop}\label{formula}
Let $T$ be a normal UCP-map on a von Neumann algebra $M$. For $x,y\in
M$, we have $x\otimes
y\phi^\frac{1}{2}\in\mathcal{D}(M\otimes_TL^2(M);\phi)$. Moreover, we have
\[
\pi_\phi(x_1\otimes
y_1\phi^\frac{1}{2})^*\pi_\phi(x_2\otimes
y_2\phi^\frac{1}{2})=y_1^*T(x_1^*x_2)y_2
\]
for $x_1,x_2,y_1,y_2\in
M$.
\end{prop}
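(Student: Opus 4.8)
The plan is to verify both assertions by direct computation, reducing everything to the defining inner product of $M\otimes_TL^2(M)$ together with the standard facts about the standard space $L^2(M)$. In particular I will use that, since $\phi$ is faithful, $\overline{\phi^{\frac{1}{2}}M}=L^2(M)$, so that each operator $\pi_\phi(\cdot)$ is determined by its values on the dense set $\{\phi^{\frac{1}{2}}b:b\in M\}$, and that the commuting left and right actions of $M$ on $L^2(M)$ make $y\phi^{\frac{1}{2}}b$ unambiguous, namely $y\phi^{\frac{1}{2}}b=y(\phi^{\frac{1}{2}}b)=(y\phi^{\frac{1}{2}})b$.

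First I would establish $\phi$-boundedness. For $b\in M$ the right action gives $(x\otimes y\phi^{\frac{1}{2}})b=x\otimes(y\phi^{\frac{1}{2}}b)$, so the defining inner product yields
\[
\|(x\otimes y\phi^{\frac{1}{2}})b\|^2=\langle y\phi^{\frac{1}{2}}b,\,T(x^*x)\,y\phi^{\frac{1}{2}}b\rangle.
\]
Since $T$ is unital and positive, $x^*x\leq\|x\|^2 1$ forces $T(x^*x)\leq\|x\|^2 1$, while the left action of $M$ on $L^2(M)$ satisfies $\|y\zeta\|\leq\|y\|\,\|\zeta\|$. Combining these gives $\|(x\otimes y\phi^{\frac{1}{2}})b\|\leq\|x\|\,\|y\|\,\|\phi^{\frac{1}{2}}b\|$, so with $c=\|x\|\,\|y\|$ we obtain $x\otimes y\phi^{\frac{1}{2}}\in\mathcal{D}(M\otimes_TL^2(M);\phi)$, and at the same time that $\pi_\phi(x\otimes y\phi^{\frac{1}{2}})$ is the bounded extension of $\phi^{\frac{1}{2}}b\mapsto x\otimes(y\phi^{\frac{1}{2}}b)$.

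For the product formula I would first observe that each $\pi_\phi(x\otimes y\phi^{\frac{1}{2}})$ intertwines the right $M$-actions on $L^2(M)$ and on $M\otimes_TL^2(M)$, so that $\pi_\phi(\xi_1)^*\pi_\phi(\xi_2)$ is a right $M$-linear endomorphism of $L^2(M)$, hence left multiplication by an element of $M$; it therefore suffices to identify this element by its matrix coefficients against $\phi^{\frac{1}{2}}M$. Writing $\xi_i=x_i\otimes y_i\phi^{\frac{1}{2}}$ and using the adjoint relation together with $\pi_\phi(\xi_i)(\phi^{\frac{1}{2}}b)=x_i\otimes(y_i\phi^{\frac{1}{2}}b)$, I compute
\[
\langle\phi^{\frac{1}{2}}b,\,\pi_\phi(\xi_1)^*\pi_\phi(\xi_2)\,\phi^{\frac{1}{2}}c\rangle=\langle x_1\otimes(y_1\phi^{\frac{1}{2}}b),\,x_2\otimes(y_2\phi^{\frac{1}{2}}c)\rangle,
\]
and the defining inner product of $M\otimes_TL^2(M)$ turns the right-hand side into $\langle y_1\phi^{\frac{1}{2}}b,\,T(x_1^*x_2)\,y_2\phi^{\frac{1}{2}}c\rangle$. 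Moving the left actions of $y_1$ and $y_2$ across the inner product rewrites this as $\langle\phi^{\frac{1}{2}}b,\,y_1^*T(x_1^*x_2)y_2\,\phi^{\frac{1}{2}}c\rangle$, whence $\pi_\phi(\xi_1)^*\pi_\phi(\xi_2)=y_1^*T(x_1^*x_2)y_2$ on the dense domain $\phi^{\frac{1}{2}}M$, and thus as elements of $M$.

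The computations themselves are routine; the only points requiring care — and where I expect the main bookkeeping difficulty — are keeping the two commuting $M$-actions on $L^2(M)$ straight when manipulating expressions such as $y\phi^{\frac{1}{2}}b$, and justifying that $\pi_\phi(\xi_1)^*\pi_\phi(\xi_2)$ genuinely lands in $M$ rather than merely in $\mathcal{B}(L^2(M))$, which is exactly what the right $M$-linearity of the maps $\pi_\phi(\xi_i)$ provides.
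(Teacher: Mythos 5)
Your proposal is correct and follows essentially the same route as the paper: both proofs reduce everything to the defining inner product of $M\otimes_T L^2(M)$ and identify the operator $\pi_\phi(x_1\otimes y_1\phi^{\frac{1}{2}})^*\pi_\phi(x_2\otimes y_2\phi^{\frac{1}{2}})$ by its action (equivalently, its matrix coefficients) on the dense subspace $\phi^{\frac{1}{2}}M$. If anything, you are slightly more thorough than the paper, which omits the explicit $\phi$-boundedness estimate $\|(x\otimes y\phi^{\frac{1}{2}})b\|\leq\|x\|\,\|y\|\,\|\phi^{\frac{1}{2}}b\|$ and the remark that right $M$-linearity forces the composite to lie in $M$.
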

\begin{proof}
For $x',y',z\in
M$, we can compute as
\[
\langle\pi_\phi(x_1\otimes
y_1\phi^\frac{1}{2})\phi^\frac{1}{2}z,x'\otimes
y'\phi^\frac{1}{2}z'\rangle=\langle
x_1\otimes
y_1\phi^\frac{1}{2}z,x'\otimes
y'\phi^\frac{1}{2}z'\rangle=\langle\phi^\frac{1}{2}z,y_1^*T(x_1^*x')y'\phi^\frac{1}{2}z'\rangle,
\]
and hence we have $\pi_\phi(x_1\otimes
y_1\phi^\frac{1}{2})^*(x'\otimes
y'\phi^\frac{1}{2}z')=y_1^*T(x_1^*x')y'\phi^\frac{1}{2}z'$. Thus, we conclude that $\pi_\phi(x_1\otimes
y_1\phi^\frac{1}{2})^*\pi_\phi(x_2\otimes
y_2\phi^\frac{1}{2})\phi^\frac{1}{2}z=y_1^*T(x_1^*x_2)y_2\phi^\frac{1}{2}z.$
\end{proof}

Finally, we prepare notations related with partitions. We fix $t>0$. Let $\mathfrak{P}_t$ be the set of all finite tuples $\mathfrak{p}=(t_1,\cdots,t_n)$ with $t_i>0$ such that $\sum_{i=1}^nt_i=t$. For $\mathfrak{p}=(t_1,\cdots,t_n)\in\mathfrak{P}_t$, we define $\#\mathfrak{p}=n$. Let $\mathfrak{p}=(t_1,\cdots,t_n), \mathfrak{q}=(s_1,\cdots,s_m)\in\mathfrak{P}_t$. We define the joint tuple by $\mathfrak{p}\lor\mathfrak{q}=(t_1,\cdots,t_n,s_1,\cdots,s_m)$ for $\mathfrak{p}=(t_1,\cdots,t_n), \mathfrak{q}=(s_1,\cdots,s_m)\in\mathfrak{P}_t$. Also, we write $\mathfrak{p}\succ\mathfrak{q}$ if there exist partitions $\mathfrak{q}_i\in\mathfrak{P}_{s_i}$ for $i=1,\cdots,m$ such that $\mathfrak{p}=\mathfrak{q}_1\lor\cdots\lor\mathfrak{q}_m$. Let $\mathfrak{P}_0$ be the singleton of the empty tuple $()$ satisfying $\mathfrak{p}\lor()=()\lor\mathfrak{p}=\mathfrak{p}$. Note that when we consider partitions of an interval $[0,t]$, treating $\mathfrak{P}_t$ or the set $\mathfrak{P}_t'$ of all finite tuples $(t_1,\cdots,t_n)$ such that $t=t_n>t_{n-1}>\cdots>t_1>0$ is equivalent because $\mathfrak{P}_t$ and $\mathfrak{P}_t'$ are order isomorphic via a map $\mathfrak{o}:\mathfrak{P}_t\to\mathfrak{P}_t'$ defined by $\mathfrak{o}(t_1,t_2,\cdots,t_n)=\left(\sum_{i=1}^1t_i,\sum_{i=1}^{2}t_i,\cdots,\sum_{i=1}^nt_i\right)$ for each $\mathfrak{p}=(t_1,\cdots,t_n)\in\mathfrak{P}_t$.

\section{Product systems of W$^*$-bimodules and maximal dilations}\label{Product systems of W$^*$-bimodules}
In this section, we will introduce a new concepts of product system of W$^*$-bimodules and their units which are inspired by the definitions of Arveson's and Bhat-Skeide's product systems and units. We will construct an algebraic E$_0$-semigroup from a given unit of a product system of W$^*$-bimodules by taking the inductive limit.

\begin{defi}\label{defrelativesystem}
Let $M$ be a von Neumann algebra and $H=\{\mathcal{H}_t\}_{t\geq0}$ a family of W$^*$-$M$-bimodules with $\mathcal{H}_0=L^2(M)$. If there exist bimodule unitaries $U_{s,t}:\mathcal{H}_s\otimes^M\mathcal{H}_t\to\mathcal{H}_{s+t}$ for each $s,t\geq0$ such that
\begin{equation}\label{productsystemunitary}
U_{r,s+t}({\rm
id}_{\mathcal{H}_r}\otimes^MU_{s,t})=U_{r+s,t}(U_{r,s}\otimes^M{\rm
id}_{\mathcal{H}_t})
\end{equation}
for each $r,s,t\geq0$, and $U_{0,t}$ and $U_{s,0}$ are the canonical identifications, then the pair $(H,\{U_{s,t}\}_{s,t\geq0})$ is called a product system of W$^*$-$M$-bimodules, where the notation $\otimes^M$ denotes the relative tensor product over $M$.
\end{defi}
\begin{rema}
Precisely speaking, associativity {\rm(\ref{productsystemunitary})} means that the following diagram commutes.
\[
\begin{xy}
(75,0) *{\mathcal{H}_{r+s+t}}="D",
(35,0) *{\mathcal{H}_{r+s}\otimes^M\mathcal{H}_t}="E",
(35,-15) *{(\mathcal{H}_r\otimes^M\mathcal{H}_s)\otimes^M\mathcal{H}_t}="F",
(115,-15) *{\mathcal{H}_r\otimes^M(\mathcal{H}_s\otimes^M\mathcal{H}_t)}="G",
(115,0) *{\mathcal{H}_r\otimes^M\mathcal{H}_{s+t}}="H",
\ar "H";"D"^{U_{r,s+t}}
\ar "E";"D"^{U_{r+s,t}}
\ar "F";"E"^{U_{r,s}\otimes{\rm
id}_{\mathcal{H}_t}}
\ar "F";"G"^a
\ar "G";"H"^{{\rm
id}_{\mathcal{H}_r}\otimes
U_{s,t}}
\end{xy}
\]
Here, the morphism $a$ is the associativity isomorphism which is discussed in detail in \cite{sawa-yama17}. By \cite[Theorem 3.2]{sawa-yama17}, we can choose either the left or the right relative tensor product. We will construct a product system of W$^*$-bimodules from a given CP$_0$-semigroup by left relative tensor products.
\end{rema}
\begin{rema}
The notion of product system of W$^*$-bimodules is a direct extension of Arveson's product system. Indeed, if $M=\mathbb{C}$, then a product system of W$^*$-$\mathbb{C}$-bimodules is just an Arveson's product system without a measurable structure. For the purpose of this paper, measurable structures for product systems of W$^*$-bimodules are not necessary.
\end{rema}
\begin{defi}\label{units}
Let $M$ be a von Neumann algebra with a faithful normal state $\phi$ and $(H,\{U_{s,t}\}_{s,t\geq0})$ a product system of W$^*$-$M$-bimodules. A family $\Xi=\{\xi(t)\}_{t\geq0}$ of $\xi(t)\in\mathcal{D}(\mathcal{H}_t;\phi)$ is called a unit of $H$ with respect to $\phi$ if $\xi(0)=\phi^\frac{1}{2}$ and
\[
U_{s,t}(\xi(s)\phi^{-\frac{1}{2}}\xi(t))=\xi(s+t)
\]
for all $s,t\geq0$.

If a unit $\Xi=\{\xi(t)\}_{t\geq0}$ satisfies $\pi_\phi(\xi(t))^*\pi_\phi(\xi(t))=1_M\ (\|\pi_\phi(\xi(t))^*\pi_\phi(\xi(t))\|\leq1)$, it is said to be unital {\rm(}contractive, respectively{\rm)}.
\end{defi}
We introduce a natural notion of isomorphism between product systems of W$^*$-$M$-bimodules and generating property for units as follows:
\begin{defi}\label{isomorphismsystem}
Let $(H,\{U_{s,t}\}_{s,t\geq0})$ and $(K,\{V_{s,t}\}_{s,t\geq0})$ be  product systems of W$^*$-$M$-bimodules. An isomorphism is a family $u^\otimes=\{u_t\}_{t\geq0}$ of $M$-bilinear unitaries $u_t:\mathcal{H}_t\to\mathcal{K}_t$ satisfying 
\begin{eqnarray}\label{isomorphism}
V_{s,t}(u_s\otimes^M
u_t)=u_{s+t}U_{s,t}
\end{eqnarray}
for all $s,t\geq0$. Then, the product system $H$ is said to be isomorphic to $K$ and we denote as $H\cong
K$.
\end{defi}
\begin{defi}
Let $H=\{\mathcal{H}_t\}_{t\geq0}$ be a product system of W$^*$-$M$-bimodules with $M$-bilinear unitaries $\{U_{s,t}\}_{s,t\geq0}$. For $t\geq0$ and $\mathfrak{p}=(t_1,\cdots,t_n)\in\mathfrak{P}_t$, we denote the $M$-bilinear unitary 
\begin{eqnarray*}
U(\mathfrak{p})=U_{t_1,t_1'}({\rm
id}_{t_1}\otimes
U_{t_2,t_2'})({\rm
id}_{t_1,t_2}\otimes
U_{t_3,t_3'})\cdots({\rm
id}_{t_1,\cdots,t_{n-2}}\otimes
U_{t_{n-1},t_{n-1}'})
\end{eqnarray*}
from $\mathcal{H}_{t_1}\otimes^M\cdots\otimes^M\mathcal{H}_{t_n}$ onto $\mathcal{H}_{t}$, where $t_i'=t_{i+1}+\cdots+t_{n}$ and ${\rm
id}_{t_1,\cdots,t_{i}}={\rm
id}_{t_1}\otimes\cdots\otimes{\rm
id}_{t_i}$. A unital unit $\Xi=\{\xi(t)\}_{t\geq0}$ with respect to $\phi$ is said to be generating when the set 
\[
\{U(\mathfrak{p})(x_1\xi(t_1)\phi^{-\frac{1}{2}}\cdots
\phi^{-\frac{1}{2}}x_{n-1}\xi(t_{n-1})\phi^{-\frac{1}{2}}x_n\xi(t_n)y)\mid\mathfrak{p}\in\mathfrak{P}_t,\ x_1,\cdots,x_n,y\in
M\}
\]
is dense in $\mathcal{H}_{t}$ for all $t\geq0$.
\end{defi}
\begin{rema}
Like Arveson's and Bhat-Skeide's product system, every product system of W$^*$-$M$-bimodules does not always have a unit with respect to a faithful normal state $\phi$ on $M$. We say that a product system of W$^*$-$M$-bimodules equipped with a unit with respect to $\phi$ is $\phi$-spatial. From now, we fix a faithful normal state $\phi$ on a von Neumann algebra $M$ through this paper . When we say a unit merely, suppose that it is a unit with respect to $\phi$ of a $\phi$-spatial product system.
\end{rema}
Let $\Xi$ be a unital unit of a $\phi$-spatial product system $H$ of W$^*$-$M$-bimodules. We shall define the inductive limit $\mathfrak{H}$ of $H$ which depends on $\Xi$ and an algebraic E$_0$-semigroup on ${\rm
End}(\mathfrak{H}_M)$ as follows: for $0\leq
s\leq
t$, we define a right $M$-linear isometry $b_{t,s}:\mathcal{H}_s\to\mathcal{H}_t$ by
\begin{equation}\label{inductivebeta}
b_{t,s}(\xi)=U_{t-s,s}(\xi(t-s)\phi^{-\frac{1}{2}}\xi)
\end{equation}
for each $\xi\in\mathcal{H}_s$. Note that $b_{t,s}\circ
b_{s,r}=b_{t,r}$ for $0\leq
r\leq
s\leq
t$. Let $\mathfrak{H}$ be the inductive limit of the inductive system $(H,\{b_{t,s}\}_{s\leq
t})$ and $\kappa_t:\mathcal{H}_t\to\mathfrak{H}$ the canonical embedding for each $t\geq0$. The right W$^*$-$M$-module $\mathfrak{H}$ is called the inductive limit of the pair $(H,\Xi)$.
\begin{theo}\label{U_tgeneral}
Fix $t\geq0$. There exists a right $M$-linear unitary $U_t:\mathfrak{H}\otimes^M\mathcal{H}_t\to\mathfrak{H}$
\end{theo}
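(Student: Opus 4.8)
The plan is to define $U_t$ on the simple tensors coming from the levels of the inductive system, check it is a well-defined isometry, and then show it has dense range. Fix $s\ge 0$, $\eta\in\mathcal{D}(\mathcal{H}_s;\phi)$ and $\zeta\in\mathcal{H}_t$. Since $\kappa_s$ is an isometric right $M$-linear map we have $\|\kappa_s(\eta)x\|=\|\eta x\|$ for $x\in M$, so $\kappa_s(\eta)\in\mathcal{D}(\mathfrak{H};\phi)$, and a short computation on the dense subspace $\phi^{\frac12}M\subseteq L^2(M)$ gives the key identity $\pi_\phi(\kappa_s(\eta_1))^*\pi_\phi(\kappa_s(\eta_2))=\pi_\phi(\eta_1)^*\pi_\phi(\eta_2)$ as operators on $L^2(M)$. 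I therefore define
\[
U_t\bigl(\kappa_s(\eta)\phi^{-\frac12}\zeta\bigr)=\kappa_{s+t}\bigl(U_{s,t}(\eta\phi^{-\frac12}\zeta)\bigr).
\]

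First I would verify that this does not depend on the level $s$ used to represent a vector of $\mathfrak{H}$; this is the heart of the construction and the only place the associativity (\ref{productsystemunitary}) enters. For $s\le s'$ put $r=s'-s$. Using (\ref{inductivebeta}) one unwinds both $b_{s',s}(\eta)=U_{r,s}(\xi(r)\phi^{-\frac12}\eta)$ and $b_{s'+t,\,s+t}(\omega)=U_{r,s+t}(\xi(r)\phi^{-\frac12}\omega)$, and the required compatibility $b_{s'+t,\,s+t}(U_{s,t}(\eta\phi^{-\frac12}\zeta))=U_{s',t}(b_{s',s}(\eta)\phi^{-\frac12}\zeta)$ becomes precisely the associativity identity (\ref{productsystemunitary}) applied to $\xi(r)\phi^{-\frac12}\eta\phi^{-\frac12}\zeta\in\mathcal{H}_r\otimes^M\mathcal{H}_s\otimes^M\mathcal{H}_t$. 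Since $\kappa_{s+t}=\kappa_{s'+t}\circ b_{s'+t,\,s+t}$, this shows $U_t$ is well defined on the level simple tensors.

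Next I would prove $U_t$ is isometric. Given two such simple tensors I may, after pushing both to a common level through the isometries $b_{\bullet,\bullet}$ (legitimate by the previous step, and harmless since each $b$ preserves $\phi$-boundedness), assume they are $\kappa_s(\eta_i)\phi^{-\frac12}\zeta_i$ with the same $s$. Their inner product equals $\langle\zeta_1,\pi_\phi(\eta_1)^*\pi_\phi(\eta_2)\zeta_2\rangle$ by the key identity above and the definition of the relative tensor product, while the inner product of their images equals $\langle U_{s,t}(\eta_1\phi^{-\frac12}\zeta_1),U_{s,t}(\eta_2\phi^{-\frac12}\zeta_2)\rangle$ by the isometry of $\kappa_{s+t}$, and this in turn equals $\langle\zeta_1,\pi_\phi(\eta_1)^*\pi_\phi(\eta_2)\zeta_2\rangle$ since $U_{s,t}$ is unitary. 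By sesquilinearity $U_t$ preserves inner products on the span of the level simple tensors, hence is well defined and isometric there and extends to an isometry on the closure.

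Finally, $U_t$ has dense range: the vectors $U_{s,t}(\eta\phi^{-\frac12}\zeta)$ are dense in $\mathcal{H}_{s+t}$, so the range meets each $\kappa_{s+t}(\mathcal{H}_{s+t})$ densely, and since $b_{s',s}$ embeds $\mathcal{H}_s$ into $\mathcal{H}_{s'}$ the subspaces $\kappa_r(\mathcal{H}_r)$ increase with $r$, whence $\overline{\bigcup_{r\ge t}\kappa_r(\mathcal{H}_r)}=\mathfrak{H}$; as an isometry has closed range, $U_t$ is onto. Right $M$-linearity is immediate, because the right action on $\mathfrak{H}\otimes^M\mathcal{H}_t$ is carried on the $\mathcal{H}_t$-factor and both $U_{s,t}$ and $\kappa_{s+t}$ are right $M$-linear. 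The one genuinely technical point, which I would isolate as a separate lemma, is that the level simple tensors $\kappa_s(\eta)\phi^{-\frac12}\zeta$ are dense in \emph{all} of $\mathfrak{H}\otimes^M\mathcal{H}_t$ — equivalently, that relative tensoring with $\mathcal{H}_t$ on the right commutes with the inductive limit defining $\mathfrak{H}$. This forces one to approximate an arbitrary $\phi$-bounded vector of $\mathfrak{H}$ by level vectors $\kappa_s(\eta)$ with simultaneous control of the maps $\pi_\phi$, and I expect it to be the main obstacle.
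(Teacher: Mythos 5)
Your proposal is correct and follows essentially the same route as the paper's proof: the same formula $U_t\bigl(\kappa_s(\eta)\phi^{-\frac12}\zeta\bigr)=\kappa_{s+t}U_{s,t}(\eta\phi^{-\frac12}\zeta)$, isometry via the identity $\pi_\phi(\kappa_s\eta_1)^*\pi_\phi(\kappa_s\eta_2)=\pi_\phi(\eta_1)^*\pi_\phi(\eta_2)$ after pushing two levels to a common one, and surjectivity from density of $\bigcup_{r}\kappa_r(\mathcal{H}_r)$ in $\mathfrak{H}$ together with closedness of an isometry's range (the paper argues this in two cases $s\le t$ and $s>t$, yours uniformly, but the content is the same). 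The two points at which you are more careful than the paper --- the associativity-based compatibility $b_{s'+t,s+t}\circ U_{s,t}=U_{s',t}\circ(b_{s',s}\otimes^M\mathrm{id}_{\mathcal{H}_t})$ that gives well-definedness, and the density of level simple tensors in $\mathfrak{H}\otimes^M\mathcal{H}_t$, which you flag as the main remaining obstacle --- are both passed over in silence in the paper's proof; for the record, the latter closes quickly: for $\xi\in\mathcal{D}(\mathfrak{H};\phi)$ the vector $\kappa_s\kappa_s^*\xi=\kappa_s(\kappa_s^*\xi)$ is a level $\phi$-bounded vector with $\pi_\phi(\kappa_s\kappa_s^*\xi)=\kappa_s\kappa_s^*\pi_\phi(\xi)$, so $\pi_\phi(\xi)^*(1-\kappa_s\kappa_s^*)\pi_\phi(\xi)$ is a bounded decreasing net of positive elements of $M$ tending strongly to $0$, and normality of the left action of $M$ on $\mathcal{H}_t$ yields $(\kappa_s\kappa_s^*\xi)\phi^{-\frac12}\eta\to\xi\phi^{-\frac12}\eta$.
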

\begin{proof}
For $s\geq0$, $\xi_s\in\mathcal{D}(\mathcal{H}_t;\phi)$ and $\eta_t\in\mathcal{H}_t$, we define
\[
U_t((\kappa_s\xi_s)\phi^{-\frac{1}{2}}\eta_t)=\kappa_{s+t}U_{s,t}(\xi_s\phi^{-\frac{1}{2}}\eta_t).
\]
We shall show that $U_t$ is an isometry. For $s\geq0$, $\xi_s,\xi_s'\in\mathcal{H}_s$ and $\eta_t,\eta_t'\in\mathcal{H}_t$, we have
\begin{eqnarray*}
&&\hspace{-20pt}\langle
U_t((\kappa_s\xi_s)\phi^{-\frac{1}{2}}\eta_t),U_t((\kappa_s\xi_s')\phi^{-\frac{1}{2}}\eta_t')\rangle=\langle\xi_s\phi^{-\frac{1}{2}}\eta_t,\xi_s'\phi^{-\frac{1}{2}}\eta_t'\rangle\\
&&\hspace{-20pt}=\langle\eta_t,\pi_\phi(\xi_s)^*\pi_\phi(\xi_s')\eta_t'\rangle=\langle\eta_t,\pi_\phi(\kappa_s\xi_s)^*\pi_\phi(\kappa_s\xi_s')\eta_t'\rangle=\langle(\kappa_s\xi_s)\phi^{-\frac{1}{2}}\eta_t,(\kappa_s\xi_s')\phi^{-\frac{1}{2}}\eta_t'\rangle.
\end{eqnarray*}
This implies that for $s\geq0$, $\xi_s\in\mathcal{H}_s,\ \zeta_r\in\mathcal{H}_r$ and $\eta_t,\eta_t'\in\mathcal{H}_t$, in the general case, we have
\begin{eqnarray*}
&&\langle
U_t((\kappa_s\xi_s)\phi^{-\frac{1}{2}}\eta_t),U_t((\kappa_r\zeta_r)\phi^{-\frac{1}{2}}\eta_t')\rangle\\
&&=\langle
U_t((\kappa_{s+t}b_{s+r.s}\xi_s)\phi^{-\frac{1}{2}}\eta_t),U_t((\kappa_{s+r}b_{s+r,r}\zeta_r)\phi^{-\frac{1}{2}}\eta_t')\rangle\\
&&=\langle
(\kappa_{s+t}b_{s+r.s}\xi_s)\phi^{-\frac{1}{2}}\eta_t,(\kappa_{s+r}b_{s+r,r}\zeta_r)\phi^{-\frac{1}{2}}\eta_t'\rangle=\langle
(\kappa_s\xi_s)\phi^{-\frac{1}{2}}\eta_t,(\kappa_r\zeta_r)\phi^{-\frac{1}{2}}\eta_t'\rangle.
\end{eqnarray*}

We shall check that $U_t$ is surjective. In the case when $s\leq
t$, for $\eta=\kappa_s\eta_s\in\mathfrak{H}$, we can conclude that the image of $(\kappa_0\phi^{\frac{1}{2}})\phi^{-\frac{1}{2}}\kappa_t^*\kappa_s\eta_s$ by $U_t$ is $\eta$. In the case when $s>t$, let $\mathcal{D}$ be a subspace of $\mathcal{H}_{s-t}\otimes^M\mathcal{H}_t$ spanned by vectors $\eta_{s-t}\phi^{-\frac{1}{2}}\eta_t$ for all $\phi$-bounded vectors $\eta_{s-t}\in\mathcal{H}_{s-t}$ and $\eta_t\in\mathcal{H}_t$. For $\eta=\kappa_s\eta_s\in\mathfrak{H}$, $\eta_s$ can be approximated by vectors $U_{s-t,t}\zeta$ for some $\zeta\in\mathcal{D}$ and we have $U_t(\kappa_{s-t}\otimes{\rm
id}_{\mathcal{H}_t})=\kappa_sU_{s-t,t}$ on $\mathcal{D}$.
\end{proof}

The von Neumann algebra $M$ can be represented faithfully on $\mathfrak{H}$ by 
\begin{equation}\label{faithfulrepM}
\pi(x)\xi=\kappa_0(x(\kappa_0^*\xi))
\end{equation}
for each $x\in
M$ and $\xi\in\mathfrak{H}$. Note that $\pi(M)\subset{\rm
End}(\mathfrak{H}_M)$. Then, we can define an algebraic E$_0$-semigroup $\theta$ on the von Neumann algebra ${\rm
End}(\mathfrak{H}_M)$ by
\begin{equation}\label{maxdilation}
\theta_t(a)=U_t(a\otimes^M{\rm
id}_{\mathcal{H}_t})U_t^*
\end{equation}
for each $a\in{\rm
End}(\mathfrak{H}_M)$. The algebraic E$_0$-semigroup $\theta$ is called the dilation of the pair $(H,\Xi)$. If we assume a continuity to the unit $\Xi$, then it becomes an E$_0$-semigroup as follows:
\begin{prop}
If the unital unit $\Xi=\{\xi(t)\}_{t\geq0}$ of the $\phi$-spatial product system $H$ of W$^*$-$M$-bimodules satisfies that
\begin{equation}\label{continuity2}
U_t(\xi\phi^{-\frac{1}{2}}\xi(t))\to\xi\hspace{20pt}(t\to+0)
\end{equation}
for all $\xi\in\mathcal{D}(\mathfrak{H};\phi)$, then the dilation $\theta$ of the pair $(H,\Xi)$ is an E$_0$-semigroup.
\end{prop}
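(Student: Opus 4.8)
The plan is to verify the single missing property, namely the $\sigma$-weak continuity of $t\mapsto\theta_t(a)$: the algebraic semigroup law and the fact that each $\theta_t$ is a normal unital $*$-endomorphism (so $\|\theta_t(a)\|\le\|a\|$) are already contained in the construction (\ref{maxdilation}). Since the linear span of the vector functionals $\langle\xi,\,\cdot\,\eta\rangle$ is norm-dense in the predual of ${\rm End}(\mathfrak{H}_M)$, the uniform bound $\|\theta_t(a)\|\le\|a\|$ reduces everything to proving that $t\mapsto\langle\xi,\theta_t(a)\eta\rangle$ is continuous on $[0,\infty)$ for $\xi,\eta$ in the total set $\mathcal{D}(\mathfrak{H};\phi)$. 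The key device is the family $w_t\xi:=U_t(\xi\phi^{-\frac{1}{2}}\xi(t))$, defined for $\xi\in\mathcal{D}(\mathfrak{H};\phi)$. Because $a\in{\rm End}(\mathfrak{H}_M)$ acts on the left leg, unitarity of $U_t$ and (\ref{maxdilation}) give the intertwining identity $\theta_t(a)w_t=w_t a$; note also that $a\eta$ stays $\phi$-bounded when $\eta$ is. In this language the hypothesis (\ref{continuity2}) states exactly that $w_t\xi\to\xi$ in $\mathfrak{H}$ as $t\to+0$ for every $\xi\in\mathcal{D}(\mathfrak{H};\phi)$.

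First I would establish continuity at $0$. For $\xi,\eta\in\mathcal{D}(\mathfrak{H};\phi)$ the intertwiner gives
\[
\langle\xi,\theta_t(a)\eta\rangle=\langle\xi,\theta_t(a)(\eta-w_t\eta)\rangle+\langle\xi,\theta_t(a)w_t\eta\rangle=\langle\xi,\theta_t(a)(\eta-w_t\eta)\rangle+\langle\xi,w_t(a\eta)\rangle .
\]
The first summand is dominated by $\|\xi\|\,\|a\|\,\|\eta-w_t\eta\|\to0$ by (\ref{continuity2}), while the second tends to $\langle\xi,a\eta\rangle$ because $a\eta\in\mathcal{D}(\mathfrak{H};\phi)$, to which (\ref{continuity2}) again applies. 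Hence $\theta_t(a)\to a$ $\sigma$-weakly as $t\to+0$, for every $a$. This step is robust: it uses only the pointwise convergence $w_t\to1$ and the bound $\|\theta_t(a)\|\le\|a\|$, not any isometry of $w_t$.

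Right continuity at an arbitrary $t_0$ then follows at once from the semigroup law and normality: $\theta_{t_0+h}(a)=\theta_{t_0}(\theta_h(a))$, and since $\theta_h(a)\to a$ $\sigma$-weakly while $\theta_{t_0}$ is normal and the net is bounded, $\theta_{t_0+h}(a)\to\theta_{t_0}(a)$ $\sigma$-weakly. The remaining, and principal, difficulty is left continuity at $t_0>0$. Writing $\theta_{t_0}(a)=\theta_h(\theta_{t_0-h}(a))$ with $c_h:=\theta_{t_0-h}(a)$, the same manipulation yields
\[
\langle\xi,\bigl(\theta_{t_0}(a)-\theta_{t_0-h}(a)\bigr)\eta\rangle=\langle\xi,\theta_h(c_h)(\eta-w_h\eta)\rangle+\langle\xi,(w_h-1)(c_h\eta)\rangle .
\]
The first term vanishes as $h\to+0$ exactly as before. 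The hard part will be the second term: here $w_h-1$ is evaluated at the moving vector $c_h\eta$, and pointwise convergence $w_h\to1$ on $\mathcal{D}(\mathfrak{H};\phi)$ no longer suffices to conclude.

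To resolve this I would proceed as follows. When $w_s$ is isometric (equivalently $w_s^*w_s=1$; this holds, for instance, whenever the state attached to the unit via $x\mapsto\langle\xi(s),x\,\xi(s)\rangle$ equals $\phi$, so that $\|w_s\xi\|=\|\xi\|$), the adjoint of $\theta_s(c)w_s=w_sc$ reads $w_s^*\theta_s(c)=cw_s^*$, and combining this with $\theta_t=\theta_s\theta_{t-s}$ gives the clean identity $w_s^*\theta_t(a)w_s=\theta_{t-s}(a)$ for $t>s$. Taking $t=t_0$ and letting $s\to+0$, and using that $w_s\to1$ strongly forces $w_s^*\to1$ strongly for contractions, one obtains $\theta_{t_0-s}(a)=w_s^*\theta_{t_0}(a)w_s\to\theta_{t_0}(a)$ $\sigma$-weakly, which is precisely left continuity; together with right continuity this finishes the proof. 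In the general case, where $w_s$ need not be isometric, I would instead restrict the total set to the unit-generated vectors $U(\mathfrak{p})(x_1\xi(t_1)\phi^{-\frac{1}{2}}\cdots\phi^{-\frac{1}{2}}x_n\xi(t_n)y)$ and propagate the continuity (\ref{continuity2}) of $\Xi$ through the structure maps $U_{s,t}$ and the embeddings $\kappa_s$—which are themselves built from $\Xi$ via (\ref{inductivebeta})—so that $h\mapsto c_h\eta$ varies norm-continuously and the moving-vector term is controlled. I expect this left-continuity step, and in particular the status of the isometry of $w_s$, to be the main obstacle.
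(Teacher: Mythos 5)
Your opening step is exactly the paper's entire proof: for $\phi$-bounded $\xi$ the paper writes the decomposition $\theta_t(a)\xi-a\xi=\theta_t(a)\bigl(\xi-U_t(\xi\phi^{-\frac{1}{2}}\xi(t))\bigr)+\bigl(U_t(a\xi\phi^{-\frac{1}{2}}\xi(t))-a\xi\bigr)$, kills the first term by $\|\theta_t(a)\|\leq\|a\|$ together with (\ref{continuity2}), and the second by (\ref{continuity2}) applied to the $\phi$-bounded vector $a\xi$ (your intertwining identity $\theta_t(a)w_t=w_ta$ in disguise), and then concludes in a single sentence that $t\mapsto\theta_t(a)$ is $\sigma$-weakly continuous. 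So your worry is well placed: the paper proves continuity only at $t=0$ and silently treats the passage to arbitrary $t_0$ as standard. Your right-continuity argument (semigroup law plus normality of $\theta_{t_0}$) is correct.

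Your treatment of left continuity, however, which you yourself flag as the main obstacle, does not close the gap. The isometric branch assumes something the proposition does not provide: since $\|w_s\xi\|^2=\langle\xi(s),\pi_\phi(\xi)^*\pi_\phi(\xi)\,\xi(s)\rangle$ while $\|\xi\|^2=\phi\bigl(\pi_\phi(\xi)^*\pi_\phi(\xi)\bigr)$, isometry of $w_s$ is equivalent to the vector state $x\mapsto\langle\xi(s),x\xi(s)\rangle$ being equal to $\phi$, and mere boundedness of $w_s$ is already equivalent to a domination $\langle\xi(s),\cdot\ \xi(s)\rangle\leq C\phi$ in $M_*$; unitality of the unit only says $\pi_\phi(\xi(s))^*\pi_\phi(\xi(s))=1_M$ and gives neither. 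Thus in general $w_s$ need not even be a bounded operator, so the contraction argument for $w_s^*\to1$ is unavailable. Your fallback for the non-isometric case restricts to unit-generated vectors, but the proposition does not assume $\Xi$ is generating, so that set need not be total in $\mathfrak{H}$. The clean way to finish---and evidently what the paper's ``Thus'' appeals to---is classical semigroup theory on the predual: what you prove at $t=0$ says that the contraction semigroup $\{(\theta_t)_*\}_{t\geq0}$ on ${\rm End}(\mathfrak{H}_M)_*$ satisfies $(\theta_t)_*\rho\to\rho$ weakly as $t\to+0$ for every $\rho$, and a locally bounded one-parameter semigroup on a Banach space that is weakly continuous at the origin is strongly continuous on all of $[0,\infty)$ (weak right continuity gives weak measurability with separably valued orbits, Pettis' theorem gives strong measurability, the Hille--Phillips theory of measurable semigroups gives norm continuity of orbits on $(0,\infty)$, and a convex-combination approximation recovers continuity at $0$); point-norm continuity of the predual semigroup is precisely $\sigma$-weak continuity of $\theta$ at every $t_0$. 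Replacing your left-continuity sketch by this argument makes your proof complete; as written, it is not.
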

\begin{proof}
For each $a\in{\rm
End}(\mathfrak{H}_M)$ and each $\phi$-bounded vector $\xi\in\mathfrak{H}$, we have
\begin{eqnarray*}
\theta_t(a)\xi-a\xi&=&\theta_t(a)\xi-U_t(a\xi\phi^{-\frac{1}{2}}\xi(t))+U_t(a\xi\phi^{-\frac{1}{2}}\xi(t))-a\xi\\
&=&\theta_t(a)(\xi-U_t(\xi\phi^{-\frac{1}{2}}\xi(t)))+U_t(a\xi\phi^{-\frac{1}{2}}\xi(t))-a\xi\to0
\end{eqnarray*}
when $t\to0$. Thus, the map $t\mapsto\theta_t(a)$ is $\sigma$-weakly continuous for each $a\in{\rm
End}(\mathfrak{H}_M)$.
\end{proof}
\begin{defi}
We say that a unit $\Xi$ with {\rm(\ref{continuity2})} is continuous.
\end{defi}
A right cocycle $w=\{w_t\}_{t\geq0}$ for $\theta$ is said to be adapted if $\kappa_t\kappa_t^*w_t\kappa_t\kappa_t^*=w_t$ for all $t\geq0$, where $\kappa_t$ is the canonical embedding from $\mathcal{H}_t$ into $\mathcal{H}$. The following theorem describing a correspondence between cocycles and units, will enable as to classify E$_0$-semigroups by product systems of W$^*$-bimodules in Section \ref{Classification of E$_0$-semigroups}.
\begin{theo}\label{unitcocycle}
Let $\theta=\{\theta_t\}_{t\geq0}$ be the dilation of a pair $(H,\Xi)$ of a $\phi$-spatial product system $H=\{\mathcal{H}_t\}_{t\geq0}$ of W$^*$-$M$-bimodules and a continuous unital unit $\Xi=\{\xi(t)\}_{t\geq0}$. There exists a one-to-one correspondence between contractive adapted right cocycles $w=\{w_t\}_{t\geq0}$ on ${\rm
End}(\mathfrak{H}_M)$ and contractive units $\Lambda=\{\lambda(t)\}_{t\geq0}$ of $H$ by relations $\lambda(t)=\kappa_t^*w_t\kappa_0\phi^\frac{1}{2}$ and $w_t=\pi_\phi(\kappa_t\lambda(t))\pi_\phi(\kappa_0\phi^\frac{1}{2})^*$ for all $t\geq0$.
\end{theo}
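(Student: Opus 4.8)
The plan is to construct the two maps $w\mapsto\Lambda$ and $\Lambda\mapsto w$ from the stated formulas, check that each lands in the correct class, and then verify they are mutually inverse. Throughout I will use the elementary identities $\pi_\phi(\kappa_t\eta)=\kappa_t\pi_\phi(\eta)$ for $\eta\in\mathcal{D}(\mathcal{H}_t;\phi)$ (immediate from the definition of $\pi_\phi$ and right $M$-linearity of $\kappa_t$), $\pi_\phi(\kappa_0\phi^\frac{1}{2})=\kappa_0$, the relation $\kappa_tb_{t,s}=\kappa_s$ of the inductive limit (so $\mathrm{ran}\,\kappa_0\subseteq\mathrm{ran}\,\kappa_t$), and the two consequences of Theorem \ref{U_tgeneral}: $U_t((\kappa_0\phi^\frac{1}{2})\phi^{-\frac{1}{2}}\eta_t)=\kappa_t\eta_t$, hence $U_t^*\kappa_t\eta_t=(\kappa_0\phi^\frac{1}{2})\phi^{-\frac{1}{2}}\eta_t$.

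For the direction $w\mapsto\Lambda$, the decisive observation is that the cocycle relation at $t=0$ gives $w_s=\theta_0(w_s)w_0=w_sw_0$, and that $w_0$ is a contractive idempotent (from $s=t=0$) which by adaptedness has the form $\kappa_0q\kappa_0^*$ with $q$ a projection in $\mathrm{End}(L^2(M)_M)\cong M$; the unital normalisation pins this down to $w_0=\kappa_0\kappa_0^*$. Combining $w_s=w_s\kappa_0\kappa_0^*$ with adaptedness $w_s=\kappa_s\kappa_s^*w_s\kappa_s\kappa_s^*$ and $\kappa_0\kappa_0^*\le\kappa_s\kappa_s^*$ yields the normal form $w_s=\kappa_s(\kappa_s^*w_s\kappa_0)\kappa_0^*=\pi_\phi(\kappa_s\lambda(s))\kappa_0^*$, where $\lambda(s)=\kappa_s^*w_s\kappa_0\phi^\frac{1}{2}$ and $\kappa_s^*w_s\kappa_0=\pi_\phi(\lambda(s))$. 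This single identity simultaneously shows $\lambda(s)\in\mathcal{D}(\mathcal{H}_s;\phi)$ with bound $1$ (the three factors being contractions), that $\Lambda$ is contractive (since $\pi_\phi(\lambda(s))^*\pi_\phi(\lambda(s))=\kappa_0^*w_s^*w_s\kappa_0$ has norm $\le1$, using adaptedness to absorb the middle $\kappa_s\kappa_s^*$), and that applying the second map to $\Lambda$ returns $w$. The unit relation $U_{s,t}(\lambda(s)\phi^{-\frac{1}{2}}\lambda(t))=\lambda(s+t)$ I would obtain by computing $w_{s+t}\kappa_0\phi^\frac{1}{2}$ two ways: directly it is $\kappa_{s+t}\lambda(s+t)$, while via $w_{s+t}=\theta_t(w_s)w_t$ one has $w_t\kappa_0\phi^\frac{1}{2}=\kappa_t\lambda(t)$, then $U_t^*\kappa_t\lambda(t)=(\kappa_0\phi^\frac{1}{2})\phi^{-\frac{1}{2}}\lambda(t)$, and applying $w_s\otimes^M\mathrm{id}$ followed by $U_t$ gives $\kappa_{s+t}U_{s,t}(\lambda(s)\phi^{-\frac{1}{2}}\lambda(t))$; applying $\kappa_{s+t}^*$ yields the claim. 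Finally $\lambda(0)=\phi^\frac{1}{2}$ follows from $w_0=\kappa_0\kappa_0^*$.

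For the direction $\Lambda\mapsto w$, set $w_t=\pi_\phi(\kappa_t\lambda(t))\kappa_0^*=\kappa_t\pi_\phi(\lambda(t))\kappa_0^*$. Right $M$-linearity is clear since each $\pi_\phi(\xi)$ intertwines the right actions; contractivity follows from $w_t^*w_t=\kappa_0\pi_\phi(\lambda(t))^*\pi_\phi(\lambda(t))\kappa_0^*$ together with $\|\pi_\phi(\lambda(t))\|\le1$; adaptedness holds because $\mathrm{ran}\,w_t\subseteq\mathrm{ran}\,\kappa_t$ and $\kappa_0^*=\kappa_0^*\kappa_t\kappa_t^*$. The substantive point is the cocycle relation. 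I would verify it by noting that both $w_{s+t}$ and $\theta_t(w_s)w_t$ end in $\kappa_0^*$, hence vanish on $(\mathrm{ran}\,\kappa_0)^\perp$ and are determined by their values on the dense set $\kappa_0\phi^\frac{1}{2}M$: there $w_{s+t}\kappa_0\phi^\frac{1}{2}x=\kappa_{s+t}(\lambda(s+t)x)$, while $w_t\kappa_0\phi^\frac{1}{2}x=\kappa_t(\lambda(t)x)$ and then $\theta_t(w_s)\kappa_t(\lambda(t)x)=U_t((w_s\kappa_0\phi^\frac{1}{2})\phi^{-\frac{1}{2}}\lambda(t))x=U_t((\kappa_s\lambda(s))\phi^{-\frac{1}{2}}\lambda(t))x=\kappa_{s+t}U_{s,t}(\lambda(s)\phi^{-\frac{1}{2}}\lambda(t))x=\kappa_{s+t}(\lambda(s+t)x)$, using the unit relation for $\Lambda$. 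Composing the two maps in this direction returns $\lambda$ since $\kappa_t^*w_t\kappa_0\phi^\frac{1}{2}=\kappa_t^*\kappa_t\pi_\phi(\lambda(t))\phi^\frac{1}{2}=\lambda(t)$.

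I expect the main obstacle to be the first direction, and specifically the reduction of an adapted contractive cocycle to the normal form $w_s=\pi_\phi(\kappa_s\lambda(s))\kappa_0^*$: a priori adaptedness only confines $w_s$ to the corner $\kappa_s\kappa_s^*\,\mathrm{End}(\mathfrak{H}_M)\,\kappa_s\kappa_s^*$, which is far larger than the rank-one-type operators the formula produces, so without further input the two maps cannot be inverse. The point that rescues the statement is the identity $w_s=w_sw_0$ from the cocycle relation at $t=0$ together with $w_0=\kappa_0\kappa_0^*$; I would therefore justify $w_0=\kappa_0\kappa_0^*$ with care (a contractive idempotent is a projection, adaptedness localises it to the $0$-th corner, and the unital normalisation fixes it), since the normal form, contractivity, and the inverse property all rest on it. The remaining work—the $\pi_\phi$-bookkeeping and the associativity manipulation hidden in the cocycle relation—is routine given the formulas from Theorem \ref{U_tgeneral}.
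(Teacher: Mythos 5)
Your two directions coincide, computation for computation, with the paper's own proof: the unit relation for $\Lambda$ is obtained by evaluating the cocycle identity $w_{s+t}=\theta_t(w_s)w_t$ against $\kappa_0\phi^{\frac{1}{2}}$ and using $U_t^*\kappa_t\eta_t=(\kappa_0\phi^{\frac{1}{2}})\phi^{-\frac{1}{2}}\eta_t$, and the cocycle identity for $w_t=\pi_\phi(\kappa_t\lambda(t))\kappa_0^*$ is checked on $\kappa_0\phi^{\frac{1}{2}}M$ after noting that $w_t$ vanishes on the orthogonal complement of the range of $\kappa_0$ --- this is exactly the paper's (\ref{wt}) and (\ref{wt0}). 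Your genuine addition is the normal form $w_s=\kappa_s\pi_\phi(\lambda(s))\kappa_0^*$ for an arbitrary contractive adapted cocycle, which reduces the mutual-inverse claim to a one-line verification; the paper disposes of bijectivity with ``we can check \dots by (\ref{wt})'', so on this point your write-up is more complete than the original.

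The gap is precisely the step you yourself flagged: $w_0=\kappa_0\kappa_0^*$. There is no ``unital normalisation'' in the hypotheses --- the theorem quantifies over contractive adapted right cocycles and nothing more. What the hypotheses actually yield is $w_0=w_0^2$ (cocycle at $s=t=0$), $\|w_0\|\leq1$, and adaptedness, hence $w_0=\kappa_0q\kappa_0^*$ for a projection $q\in{\rm End}(L^2(M)_M)\cong M$; and $q$ need not be $1_M$, since the family $w_t\equiv0$ is a contractive adapted right cocycle with $q=0$. The consequences split in two. First, your normal form does not in fact need $q=1_M$: from $w_s=w_sw_0$ one gets $w_s\kappa_0=w_s\kappa_0q$, hence $w_s=\kappa_s\kappa_s^*w_s\kappa_0q\kappa_0^*=\kappa_s(\kappa_s^*w_s\kappa_0)\kappa_0^*=\kappa_s\pi_\phi(\lambda(s))\kappa_0^*$ all the same, so $\phi$-boundedness, contractivity and the inverse property survive; your worry that these ``all rest on'' $w_0=\kappa_0\kappa_0^*$ is unfounded. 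Second, what genuinely fails is the unit condition at zero: $\lambda(0)=q\phi^{\frac{1}{2}}$, which by faithfulness of $\phi$ equals $\phi^{\frac{1}{2}}$ only when $q=1_M$, whereas Definition \ref{units} demands $\xi(0)=\phi^{\frac{1}{2}}$. So for $q\neq1_M$ (e.g.\ the zero cocycle) the map $w\mapsto\Lambda$ does not land in the set of units, and no argument can close this, because the statement as literally phrased is false. This defect is inherited from the paper, whose proof likewise never verifies $\lambda(0)=\phi^{\frac{1}{2}}$; the repair is to restrict the correspondence to cocycles normalised by $w_0=\kappa_0\kappa_0^*$ (note that $w_0=1$ is incompatible with adaptedness, and $\kappa_0\kappa_0^*$ is exactly what the reverse map produces from $\lambda(0)=\phi^{\frac{1}{2}}$). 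With that amendment, stated openly rather than smuggled in as a ``unital normalisation'', your proof is complete.
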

\begin{proof}
Let $\{U_{s,t}\}_{s,t\geq0}$ be a family giving the relative product system structure of $H$ and $\mathfrak{H}$ the inductive limit of $(H,\Xi)$.

Let $w=\{w_t\}_{t\geq0}$ be a contractive adapted right cocycle for $\theta$. Note that each $\lambda(t)=\kappa_t^*w_t\kappa_0\phi^\frac{1}{2}$ is $\phi$-bounded. Moreover, for each $t\geq0$, we have
\begin{eqnarray}\label{etat}
\pi_\phi(\lambda(t))^*\pi_\phi(\lambda(t))\phi^\frac{1}{2}x=\kappa_0^*w_t^*w_t\kappa_0\phi^\frac{1}{2}x
\end{eqnarray}
for each $x\in
M$, and hence the contractivity of $w_t$ implies that $\|\pi_\phi(\lambda(t))^*\pi_\phi(\lambda(t))\|\leq1$. We shall show that $\Lambda$ is a unit. For $s,t\geq0$, $\kappa_{s+t}=U_t(\kappa_s\otimes{\rm
id}_t)U_{s,t}^*$ implies the following calculations.
\begin{eqnarray*}
\lambda(s+t)&=&\kappa_{s+t}^*w_{s+t}\kappa_0\phi^\frac{1}{2}=\kappa_{s+t}^*\theta_t(w_s)w_t\kappa_0\phi^\frac{1}{2}=\kappa_{s+t}^*U_t(w_s\otimes{\rm
id}_{t})U_t^*w_t\kappa_0\phi^\frac{1}{2}\\
&=&\kappa_{s+t}^*U_t(w_s\otimes{\rm
id}_{t})((\kappa_0\phi^\frac{1}{2})\phi^{-\frac{1}{2}}(\kappa_t^*w_t\kappa_0\phi^\frac{1}{2}))=\kappa_{s+t}^*U_t((w_s\kappa_0\phi^\frac{1}{2})\phi^{-\frac{1}{2}}(\kappa_t^*w_t\kappa_0\phi^\frac{1}{2}))\\
&=&U_{s,t}(\kappa_s^*\otimes{\rm
id}_t)U_t^*U_t((w_s\kappa_0\phi^\frac{1}{2})\phi^{-\frac{1}{2}}(\kappa_t^*w_t\kappa_0\phi^\frac{1}{2}))=U_{s,t}(\lambda(s)\phi^{-\frac{1}{2}}\lambda(t)).
\end{eqnarray*}

Conversely, let $\Lambda=\{\lambda(t)\}_{t\geq0}$ be a contractive unit of $H$, and for each $t\geq0$, $w_t=\pi_\phi(\kappa_t\lambda(t))\pi_\phi(\kappa_0\phi^\frac{1}{2})^*\in{\rm
End}(\mathcal{H}_M)$. For all $\xi\in\mathcal{H}$, the equation
\begin{eqnarray}\label{wt}
w_t\xi=\kappa_tU_{t,0}(\lambda(t)\phi^{-\frac{1}{2}}\kappa_0^*\xi)
\end{eqnarray}
is implied from the approximation of $\kappa_0^*\xi$ by vectors as the form of $\phi^\frac{1}{2}x$. In particular, 
\begin{eqnarray}\label{wt0}
w_t(\kappa_0\phi^\frac{1}{2}x)=\kappa_t\lambda(t)x
\end{eqnarray}
for all $t\geq0,\ x\in
M$, and hence $w_t=0$ on the orthogonal complement of the closed subspace $\kappa_0\kappa_0^*\mathcal{H}$. Thus, computations
\begin{eqnarray*}
\theta_t(w_s)w_t(\kappa_0\phi^\frac{1}{2}x)&=&U_t(w_s\otimes{\rm
id}_t)U_t^*\pi_\phi(\kappa_t\lambda(t))\pi_\phi(\kappa_0\phi^\frac{1}{2})^*\kappa_0(\phi^\frac{1}{2}x)\\
&=&U_t(w_s\otimes{\rm
id}_t)U_t^*\kappa_t(\lambda(t)x)=U_t(w_s\otimes{\rm
id}_t)((\kappa_0\phi^\frac{1}{2})\phi^{-\frac{1}{2}}\lambda(t))\\
&=&U_t((w_s\kappa_0\phi^\frac{1}{2})\phi^{-\frac{1}{2}}(\eta_tx))=U_t((\kappa_s\lambda(s))\phi^{-\frac{1}{2}}(\lambda(t)x))\\
&=&\kappa_{s+t}U_{s,t}(\lambda(s)\phi^{-\frac{1}{2}}(\lambda(t)x))=w_{s+t}(\kappa_0\phi^\frac{1}{2}x)
\end{eqnarray*}
for every $x\in
M$, implies that $w$ is a right cocycle. We shall show that $w$ is adapted. For all $t\geq0$ and all $\xi\in\mathcal{H}$, by (\ref{wt}), we have
\begin{eqnarray*}
\kappa_t\kappa_t^*w_t\xi=\kappa_t\kappa_t^*w_t\kappa_0\kappa_0^*\xi=\kappa_t\kappa_t^*\kappa_tU_{t,0}(\lambda(t)\phi^{-\frac{1}{2}}(\kappa_0^*\xi))=w_t\kappa_0\kappa_0^*\xi=w_t\xi.
\end{eqnarray*}
By (\ref{wt}) again and the fact that the family $\{\kappa_t\kappa_t^*\}_{t\geq0}$ is increasing,  we have also
\[
w_t\kappa_t\kappa_t^*\xi=\kappa_tU_{t,0}(\lambda(t)\phi^{-\frac{1}{2}}\kappa_0^*\kappa_t\kappa_t^*\xi)\\
=\kappa_tU_{t,0}(\lambda(t)\phi^{-\frac{1}{2}}\kappa_0^*\xi)=w_t\xi.
\]
We conclude that $\kappa_t\kappa_t^*w_t\kappa_t\kappa_t^*=w_t$, that is, the adaptedness.

We can check that the correspondence between contractive adapted right cocycles and contractive units is one-to-one by (\ref{wt}). 
\end{proof}
Note that by (\ref{etat}) and (\ref{wt}), the unit associated with an adapted unitary right cocycle is unital, and the contractive adapted right cocycle associated with a unital unit preserves inner products on $\kappa_0\kappa_0^*\mathcal{H}$.

\section{CP$_0$-semigroups and units of product systems of W$^*$-bimodules}\label{CP$_0$-semigroups and units of product systems of W$^*$-bimodules}
In this section, we will obtain a one-to-one correspondence between algebraic CP$_0$-semigroups on $M$ and pairs of ($\phi$-spatial) product systems of W$^*$-$M$-bimodules and generating unital units up to unit preserving isomorphism. It will be shown that the dilation of the pair associated with a given CP$_0$-semigroup $T$ is a dilation of $T$. Also, we will discuss a relation between the continuity of CP$_0$-semigroups and one of units as follows: the unit associated with a CP$_0$-semigroup is continuous, and conversely, a continuous unit gives rise to a CP$_0$-semigroup.

First, we shall establish an algebraic CP$_0$-semigroup from a unit. Let $\Xi=\{\xi(t)\}_{t\geq0}$ be a unital unit of a $\phi$-spatial product system $H=\{\mathcal{H}_t\}_{t\geq0}$ of W$^*$-$M$-bimodules. We define a unital linear map $T_t^{\Xi}$ on $M$ by
\begin{equation}\label{cpfromunit}
T_t^{\Xi}(x)=\pi_\phi(\xi(t))^*\pi_\phi(x\xi(t))\in
M
\end{equation}
for each $t\geq0$ and $x\in
M$.
\begin{lemm}\label{CP0-semigroups}
The family $T^{\Xi}=\{T_t^{\Xi}\}_{t\geq0}$ is an algebraic CP$_0$-semigroup.
\end{lemm}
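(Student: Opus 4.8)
The plan is to recast each $T_t^{\Xi}$ into a Stinespring-type form and read off all the required properties from it, leaving the semigroup law as the only genuine computation. Write $V_t:=\pi_\phi(\xi(t))\colon L^2(M)\to\mathcal{H}_t$, which is bounded since $\xi(t)$ is $\phi$-bounded, and let $\rho_t$ denote the left $M$-action on $\mathcal{H}_t$. First I would record the identity $\pi_\phi(x\xi(t))=\rho_t(x)V_t$: for $y\in M$ one has $\pi_\phi(x\xi(t))(\phi^{\frac{1}{2}}y)=(x\xi(t))y=x(\xi(t)y)=\rho_t(x)V_t(\phi^{\frac{1}{2}}y)$, using that the left and right actions commute. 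Hence
\[
T_t^{\Xi}(x)=V_t^{*}\rho_t(x)V_t .
\]
From this expression the normality and complete positivity of $T_t^{\Xi}$ are immediate, since $\rho_t$ is a normal $*$-representation and $V_t$ is a fixed bounded operator. Unitality follows from the unit being unital, $V_t^{*}V_t=\pi_\phi(\xi(t))^{*}\pi_\phi(\xi(t))=1_M$; and $T_0^{\Xi}=\mathrm{id}_M$ because $\xi(0)=\phi^{\frac{1}{2}}$ forces $V_0=\mathrm{id}_{L^2(M)}$ and $\rho_0$ to be left multiplication. I would also note that $V_t$ is right $M$-linear (it intertwines the right $M$-actions on $L^2(M)$ and $\mathcal{H}_t$), so $T_t^{\Xi}(x)$ commutes with the right $M$-action on $L^2(M)$ and therefore lies in $M$ by standard-form commutation, justifying the membership asserted in \textup{(\ref{cpfromunit})}.

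The substantive step is the semigroup identity $T_s^{\Xi}T_t^{\Xi}=T_{s+t}^{\Xi}$, which is where the product-system structure enters. Using the unit relation $\xi(s+t)=U_{s,t}(\xi(s)\phi^{-\frac{1}{2}}\xi(t))$ together with the canonical identification $U_{0,t}\colon L^2(M)\otimes^M\mathcal{H}_t\to\mathcal{H}_t$, I would factor the isometry $V_{s+t}$ as
\[
V_{s+t}=U_{s,t}\,(V_s\otimes^M\mathrm{id}_{\mathcal{H}_t})\,U_{0,t}^{-1}\,V_t ,
\]
which one checks by evaluating both sides on $\phi^{\frac{1}{2}}y$ and applying the unit relation. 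Substituting this into $T_{s+t}^{\Xi}(x)=V_{s+t}^{*}\rho_{s+t}(x)V_{s+t}$ and moving $\rho_{s+t}(x)$ past the bimodule unitary $U_{s,t}$ (left $M$-linearity gives $U_{s,t}^{*}\rho_{s+t}(x)U_{s,t}=\rho_s(x)\otimes^M\mathrm{id}$) reduces the middle factor to $(V_s\otimes^M\mathrm{id})^{*}(\rho_s(x)\otimes^M\mathrm{id})(V_s\otimes^M\mathrm{id})=(V_s^{*}\rho_s(x)V_s)\otimes^M\mathrm{id}=T_s^{\Xi}(x)\otimes^M\mathrm{id}$. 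Finally, since $U_{0,t}$ intertwines the left actions, $U_{0,t}(T_s^{\Xi}(x)\otimes^M\mathrm{id})U_{0,t}^{-1}=\rho_t(T_s^{\Xi}(x))$, so that
\[
T_{s+t}^{\Xi}(x)=V_t^{*}\rho_t\!\big(T_s^{\Xi}(x)\big)V_t=T_t^{\Xi}\!\big(T_s^{\Xi}(x)\big),
\]
and commutativity of the parameter sum yields the semigroup law in either order.

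I expect the main obstacle to be precisely the bookkeeping in this second paragraph: correctly tracking how the left $M$-action commutes through the associativity/unit identifications of the relative tensor product, and verifying that $U_{0,t}$ and $U_{s,t}$ transport $\rho(\cdot)$ as claimed. An alternative, more hands-on route would replace the operator identities by a direct computation of matrix coefficients $\langle\phi^{\frac{1}{2}}a,\,T_{s+t}^{\Xi}(x)\phi^{\frac{1}{2}}b\rangle$ using the defining inner product of $\mathcal{H}_s\otimes^M\mathcal{H}_t$ and the unit relation; I would keep the Stinespring factorization as the primary argument because it isolates the conceptual content and makes normality, positivity, and unitality essentially free. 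The remaining verifications ($V_t$ bounded and right $M$-linear, $V_0=\mathrm{id}$) are routine and I would only state them.
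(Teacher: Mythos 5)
Your proof is correct, but it takes a genuinely different route from the paper's. The paper dismisses normality and complete positivity as ``clear by the definition'' and proves the semigroup law by a matrix-coefficient computation: it expands $\langle T_s^{\Xi}(T_t^{\Xi}(x))\phi^{\frac{1}{2}}y,\phi^{\frac{1}{2}}z\rangle$ using the defining inner product of the relative tensor product together with the unit relation, and concludes $T_s^{\Xi}\circ T_t^{\Xi}=T_{s+t}^{\Xi}$ by density of $\phi^{\frac{1}{2}}M$ in $L^2(M)$ --- this is exactly the ``more hands-on'' alternative you sketch at the end of your proposal. You instead work at the operator level: the Stinespring form $T_t^{\Xi}=V_t^{*}\rho_t(\cdot)V_t$ plus the factorization $V_{s+t}=U_{s,t}(V_s\otimes^M\mathrm{id}_{\mathcal{H}_t})U_{0,t}^{-1}V_t$, after which the semigroup law is conjugation bookkeeping (you obtain $T_t^{\Xi}\circ T_s^{\Xi}=T_{s+t}^{\Xi}$, which yields the law in either order by renaming the parameters). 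Your route buys more than the paper's: normality, complete positivity, unitality, $T_0^{\Xi}=\mathrm{id}_M$, and the membership $T_t^{\Xi}(x)\in M$ (via right $M$-linearity of $V_t$ and standard-form commutation) all become transparent, whereas the paper asserts the first two without argument and leaves the last two implicit. The price is that you invoke functoriality facts about relative tensor products --- that $V_s\otimes^M\mathrm{id}$ is well defined and bounded, that adjoints and compositions pass through $\cdot\otimes^M\mathrm{id}$, and that $U_{0,t}$ and $U_{s,t}$ transport the left actions as claimed --- but these are standard and are used elsewhere in the paper without comment (e.g.\ in the definition of the dilation $\theta_t(a)=U_t(a\otimes^M\mathrm{id}_{\mathcal{H}_t})U_t^{*}$), so there is no loss of rigor. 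Both arguments ultimately rest on the same identity, the unit relation; yours packages it as an operator factorization of the isometries $V_t$, the paper's as an inner-product identity on a dense set of vectors.
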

\begin{proof}
By the definition, it is clear that each $T_t^{\Xi}$ is normal completely positive map.

For $s,t\geq0$ and $x,y,z\in
M$, we can compute as
\begin{eqnarray*}
&&\langle
T_s^{\Xi}(T_t^{\Xi}(x))\phi^\frac{1}{2}y,\phi^\frac{1}{2}z\rangle=\langle\pi_\phi(\xi(s))^*\pi_\phi(\pi_\phi(\xi(t))^*\pi(x\xi(t))\xi(s))\phi^\frac{1}{2}y,\phi^\frac{1}{2}z\rangle\\
&&=\langle\pi_\phi(\xi(t))^*\pi(x\xi(t))\xi(s)y,\xi(s)z\rangle=\langle
x\xi(t)\phi^{-\frac{1}{2}}\xi(s)y,\xi(t)\phi^{-\frac{1}{2}}\xi(s)z\rangle\\
&&=\langle
xU_{s,t}(\xi(t)\phi^{-\frac{1}{2}}\xi(s))y,U_{s,t}(\xi(t)\phi^{-\frac{1}{2}}\xi(s)z)\rangle=\langle
x\xi(t+s)y,\xi(t+s)z\rangle\\
&&=\langle
\pi_\phi(\xi(s+t))^*\pi_\phi(x\xi(s+t))\phi^\frac{1}{2}y,\phi^\frac{1}{2}z\rangle=\langle
T_{s+t}^{\Xi}(x)\phi^\frac{1}{2}y,\phi^\frac{1}{2}z\rangle,
\end{eqnarray*}
and hence $T_s^{\Xi}\circ
T_t^{\Xi}=T_{s+t}^{\Xi}$.
\end{proof}

We can describe the continuity for the algebraic CP$_0$-semigroup $T^{\Xi}$ as the one for the unit $\Xi$ as the following theorem. 
\begin{theo}\label{condition for continuity}
Let $\mathfrak{H}$ be the inductive limit of the pair $(H,\Xi)$ and $U_t:\mathfrak{H}\otimes^M\mathcal{H}_t\to\mathfrak{H}$ the unitary in {\rm
Theorem \ref{U_tgeneral}}. The semigroup $T^{\Xi}$ associated with $\Xi$ is a CP$_0$-semigroup if and only if 
\begin{eqnarray}\label{continuity}
U_t(\kappa_0(x\phi^\frac{1}{2})\phi^{-\frac{1}{2}}\xi(t))=\kappa_t(x\xi(t))\to\kappa_0(x\phi^\frac{1}{2})\hspace{20pt}(t\to+0)
\end{eqnarray}
holds for each $x\in
M$.
\end{theo}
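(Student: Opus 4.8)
The plan is to route both implications through two elementary identities and then isolate the genuine analytic content. First I would record, for all $x,y,z\in M$ and $t\ge 0$, the inner product formula
\[
\langle T_t^{\Xi}(x)\phi^{\frac12}y,\phi^{\frac12}z\rangle=\langle\pi_\phi(x\xi(t))\phi^{\frac12}y,\pi_\phi(\xi(t))\phi^{\frac12}z\rangle=\langle x\xi(t)y,\xi(t)z\rangle,
\]
which is immediate from the definition (\ref{cpfromunit}) of $T_t^{\Xi}$. Next I would note the inductive-limit identity $\kappa_t(\xi(t))=\kappa_0(\phi^{\frac12})$: indeed $b_{t,0}(\phi^{\frac12})=U_{t,0}(\xi(t)\phi^{-\frac12}\phi^{\frac12})=\pi_\phi(\xi(t))\phi^{\frac12}=\xi(t)$ by (\ref{inductivebeta}), and $\kappa_0=\kappa_t\circ b_{t,0}$. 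Since $\kappa_t$ is isometric and right $M$-linear, combining these yields the bridge
\[
\langle T_t^{\Xi}(x)\phi^{\frac12}y,\phi^{\frac12}z\rangle=\langle\kappa_t(x\xi(t))\,y,\ \kappa_0(\phi^{\frac12})\,z\rangle_{\mathfrak H}
\]
between the CP$_0$-side and the inductive-limit side.

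For the direction (\ref{continuity})$\Rightarrow$CP$_0$, I would feed the norm convergence $\kappa_t(x\xi(t))\to\kappa_0(x\phi^{\frac12})$ into the bridge: since right multiplication by $y$ is bounded, the right-hand side converges to $\langle x\phi^{\frac12}y,\phi^{\frac12}z\rangle=\langle T_0^{\Xi}(x)\phi^{\frac12}y,\phi^{\frac12}z\rangle$. As $\phi$ is faithful the vectors $\phi^{\frac12}y$ are dense in $L^2(M)$, and $\{T_t^{\Xi}(x)\}$ is uniformly bounded by $\|x\|$; hence this weak-operator convergence on a total set upgrades to weak-operator, and therefore $\sigma$-weak, convergence $T_t^{\Xi}(x)\to x$ as $t\to+0$. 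The semigroup law of Lemma \ref{CP0-semigroups} then promotes $\sigma$-weak continuity at the origin to $\sigma$-weak continuity on all of $[0,\infty)$.

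For the converse I would assume $T^{\Xi}$ is a CP$_0$-semigroup, so $T_t^{\Xi}(a)\to a$ $\sigma$-weakly as $t\to+0$ for every $a$, and prove the norm convergence directly by expanding
\[
\|\kappa_t(x\xi(t))-\kappa_0(x\phi^{\frac12})\|^2=\|\kappa_t(x\xi(t))\|^2-2\,\mathrm{Re}\,\langle\kappa_t(x\xi(t)),\kappa_0(x\phi^{\frac12})\rangle+\|\kappa_0(x\phi^{\frac12})\|^2
\]
and showing each term tends to $\phi(x^*x)$. For the first term, $\pi_\phi(x\xi(t))$ equals the left action of $x$ followed by $\pi_\phi(\xi(t))$, so $\|\kappa_t(x\xi(t))\|^2=\langle T_t^{\Xi}(x^*x)\phi^{\frac12},\phi^{\frac12}\rangle=\phi(T_t^{\Xi}(x^*x))\to\phi(x^*x)$ by normality of $\phi$. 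For the cross term I would write $\kappa_0(x\phi^{\frac12})=\kappa_t(\pi_\phi(\xi(t))(x\phi^{\frac12}))$ and push $\pi_\phi(\xi(t))^*$ onto the first slot to get $\langle\kappa_t(x\xi(t)),\kappa_0(x\phi^{\frac12})\rangle=\langle T_t^{\Xi}(x)\phi^{\frac12},x\phi^{\frac12}\rangle\to\langle x\phi^{\frac12},x\phi^{\frac12}\rangle=\phi(x^*x)$, using $\sigma$-weak continuity applied to the normal functional $\omega_{\phi^{\frac12},x\phi^{\frac12}}$. The three limits cancel, giving (\ref{continuity}).

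The step I expect to be the main obstacle is not any single computation but the passage, in the first direction, from $\sigma$-weak continuity at the origin to $\sigma$-weak continuity on the whole half-line: right continuity at $t_0$ follows cleanly from $T_{t_0+h}^{\Xi}=T_{t_0}^{\Xi}T_h^{\Xi}$ and normality of $T_{t_0}^{\Xi}$, but left continuity requires controlling $((T_{t_0-h}^{\Xi})_*\psi)(T_h^{\Xi}(x)-x)$, where $T_h^{\Xi}(x)-x$ only tends to $0$ $\sigma$-weakly. I would dispatch this by the standard fact that the predual semigroup $\{(T_t^{\Xi})_*\}$ is a contraction semigroup on $M_*$ that is weakly, hence strongly, continuous at $0$, so that $s\mapsto(T_s^{\Xi})_*\psi$ is norm continuous and the offending pairing vanishes uniformly. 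A secondary subtlety is that the converse demands genuine norm convergence rather than mere weak convergence: because $\kappa_0(L^2(M))$ is not total in $\mathfrak H$, a naive weak argument fails, and it is precisely the matching of the norm term and the cross term above (both tending to $\phi(x^*x)$) that supplies the norm convergence.
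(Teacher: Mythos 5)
Your proof is correct and follows essentially the same route as the paper's: the same bridge identity $\langle T_t^{\Xi}(x)\phi^{\frac12}y,\phi^{\frac12}z\rangle=\langle x\xi(t)y,\xi(t)z\rangle$ combined with unitarity of $U_t$ for the forward direction, and the same three-term expansion of $\|\kappa_t(x\xi(t))-\kappa_0(x\phi^{\frac12})\|^2$ with each term converging to $\phi(x^*x)$ for the converse. The only difference is that you make explicit (via the predual contraction semigroup and the classical fact that weak continuity at the origin implies strong continuity) the passage from $\sigma$-weak continuity at $t=0$ to continuity on all of $[0,\infty)$, a step the paper compresses into the phrase ``by the boundedness of $\{\|T_t^{\Xi}(x)\|\}_{t\geq0}$''; this is a worthwhile clarification but not a different argument.
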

\begin{proof}
Suppose (\ref{continuity}) for all $x\in
M$. For $t\geq0$ and $x,y,z\in
M$, we have
\begin{eqnarray*}
\langle
T_t^{\Xi}(x)\phi^\frac{1}{2}y,\phi^\frac{1}{2}z\rangle&=&\langle
\pi_\phi(\xi(t))^*\pi_\phi(x\xi(t))\phi^\frac{1}{2}y,\phi^\frac{1}{2}z\rangle=\langle
x\xi(t)y,\xi(t)z\rangle\\
&=&\langle
U_t(\kappa_0(x\phi^\frac{1}{2})\phi^{-\frac{1}{2}}\xi(t)y),U_t(\kappa_0(\phi^\frac{1}{2})\phi^{-\frac{1}{2}}\xi(t)z)\rangle.
\end{eqnarray*}
Thus, when $t\to+0$, the inner product $\langle
T_t(x)\phi^\frac{1}{2}y,\phi^\frac{1}{2}z\rangle$ tends to $\langle\kappa_0(x\phi^\frac{1}{2})y,\kappa_0(\phi^\frac{1}{2})z\rangle=\langle
x\phi^\frac{1}{2}y,\phi^\frac{1}{2}z\rangle.$ We conclude that for every $x\in
M$, $T_t^{\Xi}(x)\to
x$ weakly when $t\to+0$, and hence $T^{\Xi}$ is a CP$_0$-semigroup by the boundedness of $\{\|T_t^{\Xi}(x)\|\}_{t\geq0}$.

Conversely, we assume that $T^{\Xi}$ is a CP$_0$-semigroup. We can compute as
\begin{eqnarray*}
&&\langle
x\xi(t),x\xi(t)\rangle=\langle
xU_{t,0}(\xi(t)\phi^{-\frac{1}{2}}\phi^\frac{1}{2}),xU_{t,0}(\xi(t)\phi^{-\frac{1}{2}}\phi^\frac{1}{2})\rangle=\langle\xi(t)\phi^{-\frac{1}{2}}\phi^\frac{1}{2},x^*x\xi(t)\phi^{-\frac{1}{2}}\phi^\frac{1}{2}\rangle\\
&&=\langle\phi^\frac{1}{2},\pi_\phi(\xi(t))^*\pi_\phi(x^*x\xi(t))\phi^\frac{1}{2}\rangle=\langle\phi^\frac{1}{2},T_t(x^*x)\phi^\frac{1}{2}\rangle,\\
&&\langle
\kappa_t(x\xi(t)),\kappa_0(x\phi^\frac{1}{2})\rangle=\langle
\kappa_t(x\xi(t)),\kappa_t
b_{t,0}\kappa_0(x\phi^\frac{1}{2})\rangle=\langle
x\xi(t),U_{t,0}(\xi(t)\phi^{-\frac{1}{2}}\kappa_0(x\phi^\frac{1}{2}))\rangle\\
&&=\langle
U_{t,0}(x\xi(t)\phi^{-\frac{1}{2}}\phi^\frac{1}{2}),U_{t,0}(\xi(t)\phi^{-\frac{1}{2}}\kappa_0(x\phi^\frac{1}{2}))\rangle=\langle\phi^\frac{1}{2},T_t(x^*)x\phi^\frac{1}{2}\rangle.
\end{eqnarray*}
Thus, when $t\to+0$, we have $\|\kappa_t(x\xi(t))-\kappa_0(x\phi^\frac{1}{2})\|^2\to0$.
\end{proof}
\begin{defi}
We say that a unit $\Xi$ with {\rm(\ref{continuity})} is weakly continuous.
\end{defi}

Next, we shall construct a product system and a unit from a given algebraic CP$_0$-semigroup $T$ on $M$. For $t>0$ and $\mathfrak{p}=(t_1,\cdots,t_n)\in\mathfrak{P}_t$, we define a W$^*$-$M$-bimodule
\begin{equation}\label{H^T(p,t)}
\mathcal{H}^T(\mathfrak{p},t)=(M\otimes_{t_1}L^2(M))\otimes^M\cdots\otimes^M(M\otimes_{t_n}L^2(M)),
\end{equation}
where $M\otimes_{s}L^2(M)=M\otimes_{T_{s}}L^2(M)$ for each $s\geq0$. Suppose $\mathfrak{p}=(t_1,\cdots,t_n)\succ\mathfrak{q}=(s_1,\cdots,s_m)$ in $\mathfrak{P}_t$, $\mathfrak{p}=\mathfrak{q}(s_1)\lor\cdots\lor\mathfrak{q}(s_m)$ and $\mathfrak{q}(s_i)=(s_{i,1},\cdots,s_{i,k(i)})\in\mathfrak{P}_{s_i}$. We define a map $ a_{\mathfrak{q}(s_i)}:M\otimes_{s_i}L^2(M)\to\mathcal{H}^T(\mathfrak{q}(s_i),s_i)$ by
\[
 a_{\mathfrak{q}(s_i)}(x\otimes_{s_i}y\phi^{\frac{1}{2}})=(x\otimes_{s_{i,1}}\phi^\frac{1}{2})\phi^{-\frac{1}{2}}(1_M\otimes_{s_{i,2}}\phi^\frac{1}{2})\phi^{-\frac{1}{2}}\cdots\phi^{-\frac{1}{2}}(1_M\otimes_{s_{i,k(i)-1}}\phi^\frac{1}{2})\phi^{-\frac{1}{2}}(1_M\otimes_{s_{i,k(i)}}y\phi^\frac{1}{2})
\]
for each $x,y\in
M$. We can check that $ a_{\mathfrak{p}(s_i)}$ is an $M$-bilinear isometry by Proposition \ref{formula}. We define an isometry 
\begin{eqnarray}\label{isometries}
 a_{\mathfrak{p},\mathfrak{q}}= a_{\mathfrak{q}(s_1)}\otimes^M\cdots\otimes^M a_{\mathfrak{q}(s_m)}:\mathcal{H}^T(\mathfrak{q},t)\to\mathcal{H}^T(\mathfrak{p},t).
\end{eqnarray}
Then, the pair $(\{\mathcal{H}^T(\mathfrak{p},t)\}_{\mathfrak{p}\in\mathfrak{P}_t},\{ a_{\mathfrak{p},\mathfrak{q}}\}_{\mathfrak{p}\succ\mathfrak{q}})$ is an inductive system of W$^*$-$M$-bimodules. Let $\mathcal{H}_t^T$ be the inductive limit and $\kappa_{\mathfrak{p},t}:\mathcal{H}^T(\mathfrak{p},t)\to\mathcal{H}_t^T$ the canonical embedding. Put $\mathcal{H}_0^T=L^2(M)$. Also, we define a family $\Xi^T=\{\xi^T(t)\}_{t\geq0}$ by $\xi^T(t)=\kappa_{(t),t}(1_M\otimes\phi^\frac{1}{2})$ for each $t>0$ and $\xi^T(0)=\phi^\frac{1}{2}$.

\begin{theo}\label{relative productH}
The family $H^T=\{\mathcal{H}_t^T\}_{t\geq0}$ is a $\phi$-spatial product system of W$^*$-$M$-bimodules and $\Xi^T$ is a generating unital unit of $H^T$.
\end{theo}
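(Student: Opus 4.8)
The plan is to exhibit the connecting unitaries $U_{s,t}$ directly on a natural dense domain, in the spirit of the proof of Theorem \ref{U_tgeneral}, and then read off every remaining assertion. The starting observation is that for $\mathfrak{p}=(t_1,\dots,t_n)\in\mathfrak{P}_s$ and $\mathfrak{q}=(s_1,\dots,s_m)\in\mathfrak{P}_t$ the two iterated relative tensor products
$\mathcal{H}^T(\mathfrak{p},s)\otimes^M\mathcal{H}^T(\mathfrak{q},t)$ and $\mathcal{H}^T(\mathfrak{p}\lor\mathfrak{q},s+t)$ have the \emph{same} defining tensor factors $M\otimes_{t_1}L^2(M),\dots,M\otimes_{s_m}L^2(M)$, so they are canonically identified through the coherent associativity isomorphism of relative tensor products recalled from \cite{sawa-yama17}. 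I would therefore set
\[
U_{s,t}\big((\kappa_{\mathfrak{p},s}\xi)\phi^{-\frac{1}{2}}(\kappa_{\mathfrak{q},t}\eta)\big)=\kappa_{\mathfrak{p}\lor\mathfrak{q},s+t}(\xi\phi^{-\frac{1}{2}}\eta)
\]
for $\phi$-bounded $\xi\in\mathcal{H}^T(\mathfrak{p},s)$ and $\eta\in\mathcal{H}^T(\mathfrak{q},t)$, and verify that this is well defined, isometric and surjective.

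Well-definedness is exactly where the construction of the connecting isometries enters: if $\mathfrak{p}'\succ\mathfrak{p}$ and $\mathfrak{q}'\succ\mathfrak{q}$, then directly from the block-wise definition (\ref{isometries}) one reads off $a_{\mathfrak{p}'\lor\mathfrak{q}',\,\mathfrak{p}\lor\mathfrak{q}}=a_{\mathfrak{p}',\mathfrak{p}}\otimes^M a_{\mathfrak{q}',\mathfrak{q}}$, so replacing $\kappa_{\mathfrak{p},s}\xi=\kappa_{\mathfrak{p}',s}(a_{\mathfrak{p}',\mathfrak{p}}\xi)$ (and likewise in $t$) leaves the right-hand side unchanged. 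Isometry is then the same computation as in Theorem \ref{U_tgeneral}: using $\pi_\phi(\kappa_{\mathfrak{p},s}\xi)=\kappa_{\mathfrak{p},s}\pi_\phi(\xi)$ and $\kappa_{\mathfrak{p},s}^*\kappa_{\mathfrak{p},s}=\mathrm{id}$ the inner product collapses to the one on $\mathcal{H}^T(\mathfrak{p}\lor\mathfrak{q},s+t)$. For surjectivity I would note that partitions of $[0,s+t]$ containing $s$ as a subdivision point are cofinal in $\mathfrak{P}_{s+t}$ and are precisely those of the form $\mathfrak{p}\lor\mathfrak{q}$; since elementary tensors $\xi\phi^{-\frac{1}{2}}\eta$ are dense in $\mathcal{H}^T(\mathfrak{p},s)\otimes^M\mathcal{H}^T(\mathfrak{q},t)=\mathcal{H}^T(\mathfrak{p}\lor\mathfrak{q},s+t)$, the range of $U_{s,t}$ is dense, and an isometry with dense range is onto. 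The associativity (\ref{productsystemunitary}) holds because both composites send $(\kappa_{\mathfrak{p}}\xi)\phi^{-\frac12}(\kappa_{\mathfrak{q}}\eta)\phi^{-\frac12}(\kappa_{\mathfrak{r}}\zeta)$ to $\kappa_{\mathfrak{p}\lor\mathfrak{q}\lor\mathfrak{r}}(\xi\phi^{-\frac12}\eta\phi^{-\frac12}\zeta)$, using associativity of $\lor$ and the pentagon coherence of \cite{sawa-yama17}; the boundary cases $U_{0,t},U_{s,0}$ are the unit isomorphisms $L^2(M)\otimes^M\mathcal{K}\cong\mathcal{K}\cong\mathcal{K}\otimes^M L^2(M)$, matching $()\lor\mathfrak{q}=\mathfrak{q}=\mathfrak{q}\lor()$.

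For the unit, $\xi^T(0)=\phi^{\frac12}$ holds by definition, and unitality is immediate from Proposition \ref{formula}: since $\kappa_{(t),t}$ is an $M$-bilinear isometry, $\pi_\phi(\xi^T(t))^*\pi_\phi(\xi^T(t))=\pi_\phi(1_M\otimes\phi^{\frac12})^*\pi_\phi(1_M\otimes\phi^{\frac12})=T_t(1_M)=1_M$. Multiplicativity $U_{s,t}(\xi^T(s)\phi^{-\frac12}\xi^T(t))=\xi^T(s+t)$ follows by unravelling the definition: the left side equals $\kappa_{(s,t),s+t}\big((1\otimes_{s}\phi^{\frac12})\phi^{-\frac12}(1\otimes_{t}\phi^{\frac12})\big)$, and since the refinement map gives $a_{(s,t),(s+t)}(1\otimes_{s+t}\phi^{\frac12})=(1\otimes_{s}\phi^{\frac12})\phi^{-\frac12}(1\otimes_{t}\phi^{\frac12})$ and $\kappa_{(s,t),s+t}\circ a_{(s,t),(s+t)}=\kappa_{(s+t),s+t}$, this is exactly $\xi^T(s+t)$. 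In particular $H^T$ is $\phi$-spatial. For the generating property, I would first record that by the definition of $U_{s,t}$ the iterated unitary $U(\mathfrak{p})$ carries $x_1\xi^T(t_1)\phi^{-\frac12}\cdots\phi^{-\frac12}x_n\xi^T(t_n)y$ to $\kappa_{\mathfrak{p},t}\big((x_1\otimes_{t_1}\phi^{\frac12})\phi^{-\frac12}\cdots\phi^{-\frac12}(x_n\otimes_{t_n}\phi^{\frac12}y)\big)$; as the union of the ranges $\kappa_{\mathfrak{p},t}(\mathcal{H}^T(\mathfrak{p},t))$ is dense in $\mathcal{H}_t^T$, it suffices to see these vectors are dense in each $\mathcal{H}^T(\mathfrak{p},t)$. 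Here I would invoke the defining relation $(\zeta\, m)\phi^{-\frac12}\omega=\zeta\,\phi^{-\frac12}(m\omega)$ of the relative tensor product to trade a left factor $mx_{i+1}$ against a right action on the $i$-th $L^2$-component, turning each intermediate $\phi^{\frac12}$ into an arbitrary $\phi^{\frac12}m$; iterating from the last slot backwards puts all $(x_1\otimes_{t_1}\phi^{\frac12}m_1)\phi^{-\frac12}\cdots\phi^{-\frac12}(x_n\otimes_{t_n}\phi^{\frac12}m_n)$ in the closed span, and since these are $\phi$-bounded (because $m^*m\le\|m\|^2$) and span a dense subspace of $\mathcal{H}^T(\mathfrak{p},t)$ while $\phi^{\frac12}M$ is dense in $L^2(M)$, this gives density.

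I expect the main obstacle to be the bookkeeping that makes $U_{s,t}$ well defined and unitary on the inductive limit, namely coordinating the refinement isometries $a_{\cdot,\cdot}$ with the concatenation $\lor$ through the coherent associativity isomorphisms of \cite{sawa-yama17} (the identity $a_{\mathfrak{p}'\lor\mathfrak{q}',\mathfrak{p}\lor\mathfrak{q}}=a_{\mathfrak{p}',\mathfrak{p}}\otimes^M a_{\mathfrak{q}',\mathfrak{q}}$ and cofinality of cut-point partitions). Once these compatibilities are in place, unitality, multiplicativity and the generating property are essentially formal.
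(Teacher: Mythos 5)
Your construction of the product-system structure is, in substance, the paper's own proof: you define $U_{s,t}^T$ on vectors $(\kappa_{\mathfrak{p},s}\xi)\phi^{-\frac12}(\kappa_{\mathfrak{q},t}\eta)$ exactly as the paper does, and the compatibility $a_{\mathfrak{p}'\lor\mathfrak{q}',\mathfrak{p}\lor\mathfrak{q}}=a_{\mathfrak{p}',\mathfrak{p}}\otimes^M a_{\mathfrak{q}',\mathfrak{q}}$ you isolate is precisely what the paper's common-refinement computation rests on; your cofinality argument for surjectivity, the associativity check via associativity of $\lor$, and the verification that $\Xi^T$ is a unital unit (via Proposition \ref{formula} and $\kappa_{(s,t),s+t}\circ a_{(s,t),(s+t)}=\kappa_{(s+t),s+t}$) are correct and even supply details the paper leaves implicit.

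The genuine gap is in the generating property. The identity you invoke, $(\zeta m)\phi^{-\frac12}\omega=\zeta\phi^{-\frac12}(m\omega)$, is the balancing rule of an algebraic or C$^*$-module tensor product; for the relative tensor product over a (generally non-tracial) state $\phi$ it is false. Indeed $\pi_\phi(\zeta m)$ equals $\pi_\phi(\zeta)$ composed with the map $\phi^{\frac12}x\mapsto\phi^{\frac12}mx$, and this map coincides with the left action of $m$ on $L^2(M)$ only when $\phi^{\frac12}m=m\phi^{\frac12}$, i.e.\ when $m$ lies in the centralizer of $\phi$; for analytic $m$ it is the left action of the modular translate $\sigma^\phi_{-i/2}(m)$, and in general it is not even bounded. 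For the same reason your parenthetical claim that $x\otimes_{t}\phi^{\frac12}m$ is $\phi$-bounded ``because $m^*m\le\|m\|^2$'' fails: in the standard-space conventions of the paper $\|\phi^{\frac12}mz\|^2=\phi(mzz^*m^*)$, which is not dominated by a multiple of $\|\phi^{\frac12}z\|^2=\phi(zz^*)$; Proposition \ref{formula} yields $\phi$-boundedness only for left multipliers $x\otimes y\phi^{\frac12}$. So both the step producing $(x_1\otimes_{t_1}\phi^{\frac12}m_1)\phi^{-\frac12}\cdots\phi^{-\frac12}(x_n\otimes_{t_n}\phi^{\frac12}m_n)$ and the ensuing density claim break down.

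The argument can be repaired without modular theory by using the balancing relation that is actually valid: for $\phi$-bounded $\xi$ and $v\in M$ one has $\xi\phi^{-\frac12}(v\eta)=\bigl(\pi_\phi(\xi)(v\phi^{\frac12})\bigr)\phi^{-\frac12}\eta$, and $\pi_\phi(x\otimes_{s}\phi^{\frac12})(v\phi^{\frac12})=x\otimes_{s}v\phi^{\frac12}$ since $\zeta\mapsto x\otimes_{s}\zeta$ is bounded. Hence
\[
(x_i\otimes_{t_i}\phi^{\frac12})\phi^{-\frac12}\bigl((vx_{i+1})\otimes_{t_{i+1}}\cdots\bigr)=(x_i\otimes_{t_i}v\phi^{\frac12})\phi^{-\frac12}\bigl(x_{i+1}\otimes_{t_{i+1}}\cdots\bigr),
\]
so a left multiplier on a slot is absorbed, as a \emph{left} (not right) multiplication, into the $L^2(M)$-leg of the preceding slot. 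Applying this once at each tensor sign shows the generating set already contains every vector $(x_1\otimes_{t_1}v_1\phi^{\frac12})\phi^{-\frac12}\cdots\phi^{-\frac12}(x_{n-1}\otimes_{t_{n-1}}v_{n-1}\phi^{\frac12})\phi^{-\frac12}(x_n\otimes_{t_n}\phi^{\frac12}y)$; these are exactly the family the paper treats as total in $\mathcal{H}^T(\mathfrak{p},t)$, with density in the last slot coming from $\overline{\phi^{\frac12}M}=L^2(M)$ and boundedness of $\eta\mapsto\Theta\phi^{-\frac12}\eta$. With that substitution your proof closes; as written, the identity it rests on is not available.
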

\begin{proof}
For $s,t>0$, we define a map $U_{s,t}^T:\mathcal{H}_s^T\otimes^M\mathcal{H}_t^T\to\mathcal{H}_{s+t}^T$ by 
\[
U_{s,t}^T((\kappa_{\mathfrak{q},s}\xi_{\mathfrak{q}})\phi^{-\frac{1}{2}}(\kappa_{\mathfrak{p},t}\eta_{\mathfrak{q}}))=\kappa_{\mathfrak{q}\lor\mathfrak{p},s+t}(\xi_\mathfrak{q}\phi^{-\frac{1}{2}}\eta_{\mathfrak{p}})
\]
for each $\mathfrak{q}=(s_1,\cdots,s_m)\in\mathfrak{P}_s, \mathfrak{p}=(t_1,\cdots,t_n)\in\mathfrak{P}_t,\ \xi_\mathfrak{q}\in\mathcal{D}(\mathcal{H}^T(\mathfrak{q},s);\phi)$ and $\eta_\mathfrak{p}\in\mathcal{H}^T(\mathfrak{p},t)$
Here, note that $\kappa_{\mathfrak{q},s}\xi_\mathfrak{q}$ is $\phi$-bounded. We shall show that $U_{s,t}^T$ is an isometry, i.e. the equation
\[
\langle(\kappa_{\mathfrak{q},s}\xi_{\mathfrak{q}})\phi^{-\frac{1}{2}}(\kappa_{\mathfrak{p},t}\eta_{\mathfrak{p}}),(\kappa_{\mathfrak{q'},s}\xi_{\mathfrak{q'}}')\phi^{-\frac{1}{2}}(\kappa_{\mathfrak{p'},t}\eta_{\mathfrak{p}'}')\rangle=\langle\kappa_{\mathfrak{q}\lor\mathfrak{p},s+t}(\xi_\mathfrak{q}\phi^{-\frac{1}{2}}\eta_{\mathfrak{p}}),\kappa_{\mathfrak{q'}\lor\mathfrak{p'},s+t}(\xi_\mathfrak{q'}'\phi^{-\frac{1}{2}}\eta_{\mathfrak{p'}}')\rangle
\]
holds for all $\mathfrak{q},\mathfrak{q}'\in\mathfrak{P}_s,\ \mathfrak{p},\mathfrak{p}'\in\mathfrak{P}_t,\ \xi_\mathfrak{q}\in\mathcal{D}(\mathcal{H}^T(\mathfrak{q},s);\phi),\xi_{\mathfrak{q'}}'\in\mathcal{D}(\mathcal{H}^T(\mathfrak{q}',s);\phi),\eta_\frak{p}\in\mathcal{H}^T(\mathfrak{p},t)$ and $\eta_{\mathfrak{p}'}'\in\mathcal{H}^T(\mathfrak{p}',t)$. If $\mathfrak{q}=\mathfrak{q}'$ and $\mathfrak{p}=\mathfrak{p}'$, we have
\begin{eqnarray*}
&&\langle(\kappa_{\mathfrak{q},s}\xi_{\mathfrak{q}})\phi^{-\frac{1}{2}}(\kappa_{\mathfrak{p},t}\eta_{\mathfrak{p}}),(\kappa_{\mathfrak{q},s}\xi_{\mathfrak{q}}')\phi^{-\frac{1}{2}}(\kappa_{\mathfrak{p},t}\eta_{\mathfrak{p}}')\rangle=\langle\kappa_{\mathfrak{p},t}\eta_{\mathfrak{p}},\pi_\phi(\kappa_{\mathfrak{q},s}\xi_{\mathfrak{q}})^*\pi_\phi(\kappa_{\mathfrak{q},s}\xi_{\mathfrak{q}}')\kappa_{\mathfrak{p},t}\eta_{\mathfrak{p}}'\rangle\\
&&=\langle\kappa_{\mathfrak{p},t}\eta_{\mathfrak{p}},\kappa_{\mathfrak{p},t}(\pi_\phi(\kappa_{\mathfrak{q},s}\xi_{\mathfrak{q}})^*\pi_\phi(\kappa_{\mathfrak{q},s}\xi_{\mathfrak{q}}'))\eta_{\mathfrak{p}}')\rangle=\langle\eta_{\mathfrak{p}},\pi_\phi(\kappa_{\mathfrak{q},s}\xi_{\mathfrak{q}})^*\pi_\phi(\kappa_{\mathfrak{q},s}\xi_{\mathfrak{q}}')\eta_{\mathfrak{p}}'\rangle\\
&&=\langle\eta_{\mathfrak{p}},\pi_\phi(\xi_{\mathfrak{q}})^*\pi_\phi(\xi_{\mathfrak{q}}')\eta_{\mathfrak{p}}')\rangle=\langle\kappa_{\mathfrak{q}\lor\mathfrak{p},s+t}(\xi_\mathfrak{q}\phi^{-\frac{1}{2}}\eta_{\mathfrak{p}}),\kappa_{\mathfrak{q}\lor\mathfrak{p},s+t}(\xi_\mathfrak{q}'\phi^{-\frac{1}{2}}\eta_{\mathfrak{p}}')\rangle.
\end{eqnarray*}
In general case, since $\kappa_{\mathfrak{s},s}=\kappa_{\mathfrak{s}',s} a_{\mathfrak{s}',\mathfrak{s}}$ for all $\mathfrak{s},\mathfrak{s}'\in\mathfrak{P}_s$ with $\mathfrak{s}'\succ\mathfrak{s}$, if we take $\hat{\mathfrak{q}}\in\mathfrak{P}_s, \hat{\mathfrak{p}}\in\mathfrak{P}_t,$ such that $\hat{\mathfrak{q}}\succ\mathfrak{q},\mathfrak{q}'$ and $\hat{\mathfrak{p}}\succ\mathfrak{p},\mathfrak{p}'$, then
\begin{eqnarray*}
&&\langle(\kappa_{\mathfrak{q},s}\xi_{\mathfrak{q}})\phi^{-\frac{1}{2}}(\kappa_{\mathfrak{p},t}\eta_{\mathfrak{p}}),(\kappa_{\mathfrak{q'},s}\xi_{\mathfrak{q'}}')\phi^{-\frac{1}{2}}(\kappa_{\mathfrak{p'},t}\eta_{\mathfrak{p}'}')\rangle\\
&&=\langle(\kappa_{\hat{\mathfrak{q}},s} a_{\hat{\mathfrak{q}},\mathfrak{q}}\xi_{\mathfrak{q}})\phi^{-\frac{1}{2}}(\kappa_{\hat{\mathfrak{p}},t} a_{\hat{\mathfrak{p}},\mathfrak{p}}\eta_{\mathfrak{p}}),(\kappa_{\hat{\mathfrak{q}},s} a_{\hat{\mathfrak{q}},\mathfrak{q}'}\xi_{\mathfrak{q}'}')\phi^{-\frac{1}{2}}(\kappa_{\hat{\mathfrak{p}},t} a_{\hat{\mathfrak{p}},\mathfrak{p}'}\eta_{\mathfrak{p}'}')\rangle\\
&&=\langle\kappa_{\hat{\mathfrak{q}}\lor\hat{\mathfrak{p}},s+t}(( a_{\hat{\mathfrak{q}},\mathfrak{q}}\xi_{\mathfrak{q}})\phi^{-\frac{1}{2}}( a_{\hat{\mathfrak{p}},\mathfrak{p}}\eta_{\mathfrak{p}})),\kappa_{\hat{\mathfrak{q}}\lor\hat{\mathfrak{p}},s+t}(( a_{\hat{\mathfrak{q}},\mathfrak{q}'}\xi_{\mathfrak{q}'}')\phi^{-\frac{1}{2}}( a_{\hat{\mathfrak{p}},\mathfrak{p}'}\eta_{\mathfrak{p}'}'))\rangle\\
&&=\langle( a_{\hat{\mathfrak{q}},\mathfrak{q}}\otimes^M a_{\hat{\mathfrak{p}},\mathfrak{p}})(\xi_{\mathfrak{q}}\phi^{-\frac{1}{2}}\eta_{\mathfrak{q}}),( a_{\hat{\mathfrak{q}},\mathfrak{q}'}\otimes^M a_{\hat{\mathfrak{p}},\mathfrak{p}'})(\xi_{\mathfrak{q}'}'\phi^{-\frac{1}{2}}\eta_{\mathfrak{q}'}')\rangle\\
&&=\langle a_{\hat{\mathfrak{q}}\lor\hat{\mathfrak{p}},\mathfrak{q}\lor\mathfrak{p}}(\xi_{\mathfrak{q}}\phi^{-\frac{1}{2}}\eta_{\mathfrak{q}}), a_{\hat{\mathfrak{q}}\lor\hat{\mathfrak{p}},\mathfrak{q}'\lor\mathfrak{p}'}(\xi_{\mathfrak{q}'}'\phi^{-\frac{1}{2}}\eta_{\mathfrak{q}'}')\rangle=\langle\kappa_{\mathfrak{q}\lor\mathfrak{p},s+t}(\xi_\mathfrak{q}\phi^{-\frac{1}{2}}\eta_{\mathfrak{p}}),\kappa_{\mathfrak{q'}\lor\mathfrak{p'},s+t}(\xi_\mathfrak{q'}'\phi^{-\frac{1}{2}}\eta_{\mathfrak{p'}}')\rangle.
\end{eqnarray*}
In particular, we conclude that $U_{s,t}^T$ is well-defined and can be extended to an isometry from $\mathcal{H}_s^T\otimes^M\mathcal{H}_t^T$ to $\mathcal{H}_{s+t}^T$, and also denote the isometry by $U_{s,t}^T$ again. The surjectivity and the two-sides linearity of $U_{s,t}^T$ are obvious.


To show (\ref{productsystemunitary}), it is enough to check it for $\phi$-bounded vectors with the form $\kappa_{\mathfrak{p},t}((x_1\otimes_{t_1}y_1\phi^{\frac{1}{2}})\phi^{-\frac{1}{2}}\cdots\phi^{-\frac{1}{2}}(x_m\otimes_{t_m}y_m\phi^{\frac{1}{2}}))$ for some $x_i,y_i\in
M$.

We conclude that $H^T$ is a product system of W$^*$-bimodules, and it is clear that $\Xi^T$ is a generating unital unit of $H^T$ by the definition of $\{U_{s,t}^T\}_{s,t\geq0}$.
\end{proof}
\begin{exam}\label{isocp}
Let $T=\{T_t\}_{t\geq0}$ be an algebraic CP$_0$-semigroup obtained by a semigroup $v$ of isometries in a von Neumann algebra $M$ in {\rm
Example \ref{cpexam0}}. For each $t\geq0$, we can identify $M\otimes_tL^2(M)$ with $L^2(M)$ by a bilinear unitary $U^t(x\otimes_ty\phi^\frac{1}{2})=xv_ty\phi^\frac{1}{2}$ for $x,y\in
M$. For $t\geq0$ and $\mathfrak{p}\in\mathfrak{P}_t$, the unitaries induce a bilinear unitary $U^\mathfrak{p}:\mathcal{H}^T(\mathfrak{p},t)\to
L^2(M)$ such that $U^\mathfrak{p} a_{\mathfrak{p},\mathfrak{q}}=U^\mathfrak{q}$ for all $\mathfrak{p}\succ\mathfrak{q}$. 

\end{exam}
\begin{exam}
We consider the CP$_0$-semigroup generated by a family of stochastic matrices. Let $M=\mathbb{C}\oplus\mathbb{C}$ be a von Neumann algebra regarded as a von Neumann subalgebra of $M_2(\mathbb{C})$. Then, we have $L^2(M)=\mathbb{C}\oplus\mathbb{C}$. Let $T=\{T_t\}_{t\geq0}$ be the CP$_0$-semigroup on $M$ associated with stochastic matrices $\left\{\begin{pmatrix}e^{-t}&1-e^{-t}\\0&1\end{pmatrix}\right\}_{t\geq0}$, that is, each $T_t$ is defined by $T_t(a\oplus
b)=(e^{-t}a+(1-e^{-t})b)\oplus
b$ for each $a,b\in\mathbb{C}$. By using the normalized canonical trace on $M_2(\mathbb{C})$, for $t\geq0$ and $\mathfrak{p}=(t_1,\cdots,t_n)\in\mathfrak{P}_t$, it turns out that $\mathcal{H}^T(\mathfrak{p},t)$ coincides with $\mathbb{C}^n\oplus\mathbb{C}^2$ on which $M$ acts as
\begin{eqnarray*}
&&(a\oplus
b)(x_1\oplus\cdots
x_n\oplus
y\oplus
z)=ax_1\oplus
bx_2\oplus\cdots\oplus
bx_n\oplus
by\oplus
bz\\
&&(x_1\oplus\cdots
x_n\oplus
y\oplus
z)(a\oplus
b)=ax_1\oplus\cdots\oplus
ax_n\oplus
ay\oplus
bz
\end{eqnarray*}
for $a,b,x_1,\cdots,x_n,y,z\in\mathbb{C}$. The W$^*$-bimodule $\mathcal{H}^T(\mathfrak{p},t)$ depends on only $T,\ t$ and the number $n$ of the partition $\mathfrak{p}$, and $\mathcal{H}_t^T$ is infinite dimensional for all $t>0$.
\end{exam}
\begin{exam}
We consider the W$^*$-$\mathcal{B}(\mathcal{H})$-bimodule $\mathcal{H}^T(\mathfrak{p},t)$ associated with the CCR heat flow in {\rm
Example \ref{heat}} for $t\geq0$ and $\mathfrak{p}\in\mathfrak{P}_t$.

Let $\mathcal{H}=L^2(\mathbb{R})$ and $M=\mathcal{B}(\mathcal{H})$.
Then, the standard space $L^2(\mathcal{B}(\mathcal{H}))$ of $M$ is isomorphic to 
$\mathcal{H}\otimes\mathcal{H}^*\cong\mathcal{C}_2(\mathcal{H})$. For $t\geq0,\ x,x'\in
M$ and $\xi\otimes\eta^*,\xi'\otimes{\eta'}^*\in\mathcal{H}\otimes\mathcal{H}^*$, the inner product on $M\otimes_tL^2(M)$ is given by $\langle
x\otimes(\xi\otimes\eta^*),x'\otimes(\xi'\otimes{\eta'}^*)\rangle=\langle\eta',\eta\rangle\int_{\mathbb{R}^2}\langle
xW_{\frac{{\bf
x}}{\sqrt{2}}}^*\xi,x'W_{\frac{{\bf
x}}{\sqrt{2}}}^*\xi'\rangle
d\mu_t({\bf
x}).$ Let $\mathfrak{p}=(t_1,\cdots,t_n)\in\mathfrak{P}_t$. Fix a faithful normal state $\phi$ on $M$ and suppose $\rho\in
\mathcal{C}_1(\mathcal{H})$ is associated with $\phi$ by $\phi(x)={\rm
tr}(\rho
x)$ for all $x\in
M$. In terms of $\mathcal{C}_2(\mathcal{H})$, the inner product on $\mathcal{H}^T(\mathfrak{p},t)$ is
\begin{eqnarray*}
&&\hspace{-20pt}\langle(x_1\otimes_{t_1}a_1\rho^\frac{1}{2})\phi^{-\frac{1}{2}}\cdots\phi^{-\frac{1}{2}}(x_n\otimes_{t_n}a_n\rho^\frac{1}{2}),(y_1\otimes_{t_1}b_1\rho^\frac{1}{2})\phi^{-\frac{1}{2}}\cdots\phi^{-\frac{1}{2}}(y_n\otimes_{t_n}b_n\rho^\frac{1}{2})\rangle\\
&&\hspace{-20pt}=\int_{\mathbb{R}^{2n}}\langle
x_1W_{\frac{{\bf
x}_1}{\sqrt{2}}}^*a_1\cdots
x_nW_{\frac{{\bf
x}_n}{\sqrt{2}}}^*a_n\rho^\frac{1}{2},y_1W_{\frac{{\bf
x}_1}{\sqrt{2}}}^*b_1\cdots
y_nW_{\frac{{\bf
x}_n}{\sqrt{2}}}^*b_n\rho^\frac{1}{2}\rangle
d\mu_{t_1}({\bf
x}_1)\cdots
d\mu_{t_n}({\bf
x}_n)
\end{eqnarray*}
for each $x_1,\cdots,x_n,y_1,\cdots,y_n,a_1,\cdots,a_n,b_1,\dots,b_n\in
M$. The properties {\rm(\ref{Weyl})} and $\mu_s*\mu_t=\mu_{s+t}$ ensure the fact that $ a_{\mathfrak{p},\mathfrak{q}}$ defined by {\rm(\ref{isometries})} is isometry for $\mathfrak{p},\mathfrak{q}\in\mathfrak{P}_t$. 
\end{exam}
In Section \ref{Heat semigroups on manifolds and product systems}, we also will discuss the case of classical heat semigroups, that is, the construction of product system associated with the heat semigroup on a compact Riemannian manifold and its dilation in more detail.\\

We denote the inductive limit of the pair $(H^T,\Xi^T)$ associated with an algebraic CP$_0$-semigroup $T$ by $\mathfrak{H}^T$, and the unitary: $\mathfrak{H}^T\otimes^M\mathcal{H}_t^T\to\mathfrak{H}^T$ in Theorem \ref{U_tgeneral} by $U_t^T$. Also, $\{b_{t,s}^T\}_{s\leq
t}$ is the family of isometries giving the inductive system for $H^T$ as (\ref{inductivebeta}) and $\pi^T$ is the faithful representation of $M$ on $\mathfrak{H}^T$ defined as (\ref{faithfulrepM}).
\begin{prop}\label{continuityp}
If $T$ is a CP$_0$-semigroup, then the unit $\Xi^T$ is continuous.
\end{prop}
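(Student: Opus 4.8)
The plan is to upgrade the \emph{weak} continuity \eqref{continuity}, which is already at hand, to the \emph{full} continuity \eqref{continuity2}, by pairing a clean norm computation with a weak-convergence argument. Throughout write $S_t\xi:=U_t^T(\xi\,\phi^{-\frac12}\,\xi^T(t))$ for $\phi$-bounded $\xi\in\mathfrak{H}^T$, so that continuity of $\Xi^T$ means $S_t\xi\to\xi$ as $t\to+0$ for every $\xi\in\mathcal{D}(\mathfrak{H}^T;\phi)$. First I would reconstruct $T$ from $\Xi^T$: since $\xi^T(t)=\kappa_{(t),t}(1_M\otimes\phi^{\frac12})$ and $\kappa_{(t),t}$ is an isometric bimodule embedding, Proposition \ref{formula} applied to $\mathcal{H}^T((t),t)=M\otimes_{T_t}L^2(M)$ gives $T^{\Xi^T}_t(x)=\pi_\phi(\xi^T(t))^*\pi_\phi(x\xi^T(t))=T_t(x)$, hence $T^{\Xi^T}=T$. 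As $T$ is a CP$_0$-semigroup, Theorem \ref{condition for continuity} shows that $\Xi^T$ is weakly continuous; this is exactly $S_t\xi\to\xi$ for the special vectors $\xi=\kappa_0(x\phi^{\frac12})$.

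Next comes the norm computation, valid for any $\phi$-bounded $\xi$. Put $a=\pi_\phi(\xi)^*\pi_\phi(\xi)\in M_+$. Because $U_t^T$ is unitary and $a\,\xi^T(t)=\kappa_{(t),t}(a\otimes\phi^{\frac12})$, the inner-product formula for the relative tensor product together with the definition of $M\otimes_{T_t}L^2(M)$ yields $\|S_t\xi\|^2=\langle\xi^T(t),a\,\xi^T(t)\rangle=\phi(T_t(a))$, while $\|\xi\|^2=\langle\phi^{\frac12},a\phi^{\frac12}\rangle=\phi(a)$. This has two consequences. First, by the $\sigma$-weak continuity of $T$ we get $\phi(T_t(a))\to\phi(a)$, so $\|S_t\xi\|\to\|\xi\|$. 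Second, since $T_t$ is unital and contractive, $\phi(T_t(a))\le\|a\|=\|\pi_\phi(\xi)\|^2$, so $S_t$ is a contraction from $\big(\mathcal{D}(\mathfrak{H}^T;\phi),\,\|\pi_\phi(\cdot)\|\big)$ into $\big(\mathfrak{H}^T,\,\|\cdot\|\big)$, \emph{uniformly in} $t$; note the identity map is bounded the same way since $\|\xi\|\le\|\pi_\phi(\xi)\|$.

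By this uniform bound it suffices to verify $S_t\xi\to\xi$ for $\xi$ running over a family that is dense in the $\phi$-bound norm, for which I would take the vectors $\xi=\kappa_s(\eta)$ with $\eta\in\mathcal{D}(\mathcal{H}^T_s;\phi)$. For such $\xi$ the first consequence already gives $\|S_t\xi\|\to\|\xi\|$, so by the Hilbert-space fact that weak convergence together with convergence of norms forces norm convergence, it is enough to establish the weak convergence $S_t\kappa_s(\eta)\rightharpoonup\kappa_s(\eta)$. Using the defining relation $U_t^T(\kappa_s(\eta)\phi^{-\frac12}\xi^T(t))=\kappa_{s+t}U_{s,t}^T(\eta\phi^{-\frac12}\xi^T(t))$ from Theorem \ref{U_tgeneral}, and $\kappa_s(\eta)=\kappa_{s+t}b^T_{s+t,s}(\eta)=\kappa_{s+t}U_{t,s}^T(\xi^T(t)\phi^{-\frac12}\eta)$, the relevant inner products reduce, through the explicit description of $U_{s,t}^T$ on the inductive system over $\mathfrak{P}_{s+t}$, to comparing the vector obtained by appending the trivial block $\xi^T(t)=\kappa_{(t),t}(1_M\otimes\phi^{\frac12})$ after $\eta$ with the one obtained by prepending it before $\eta$.

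I expect the main obstacle to be exactly this comparison: one must show that, as $t\to+0$, inserting the block $\xi^T(t)$ becomes negligible in the inductive limit. This is precisely where the weak continuity \eqref{continuity} (which encodes $\xi^T(t)\to\phi^{\frac12}$) and the $\sigma$-weak continuity of $T$ (which governs the scalars $\phi(T_t(\cdot))$ produced by Proposition \ref{formula}) have to be combined, after pushing both vectors into a common refinement of the two partitions $\mathfrak{q}\lor(t)$ and $(t)\lor\mathfrak{q}$ inside the directed set $\mathfrak{P}_{s+t}$. A secondary technical point, which I would isolate as a lemma, is the density in the $\phi$-bound norm of the family $\{\kappa_s(\eta):s\ge0,\ \eta\in\mathcal{D}(\mathcal{H}^T_s;\phi)\}$ among all $\phi$-bounded vectors of $\mathfrak{H}^T$, justifying the reduction used above.
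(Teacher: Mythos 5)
Your reduction scheme contains correct and even useful observations --- the identity $\|S_t\xi\|^2=\langle\xi^T(t),a\,\xi^T(t)\rangle=\phi(T_t(a))$ with $a=\pi_\phi(\xi)^*\pi_\phi(\xi)$, hence $\|S_t\xi\|\to\|\xi\|$, is verified correctly, as is the fact $T^{\Xi^T}=T$ --- but the proposal stops exactly where the paper's proof lives. What you call ``the main obstacle'' (that appending the block $\xi^T(t)$ after a vector becomes asymptotically the same as prepending it) is not a technical remainder: it \emph{is} the proof, and nothing in your text establishes it. The paper carries it out concretely: for an elementary vector $\xi=\kappa_s\kappa_{\mathfrak{q},s}\bigl((x_1\otimes_{s_1}y_1\phi^{\frac12})\phi^{-\frac12}\cdots\phi^{-\frac12}(x_m\otimes_{s_m}y_m\phi^{\frac12})\bigr)$ and $t<\min\{s_1,\dots,s_m\}$, it forms the common refinement $\mathfrak{p}'=(t,s_1-t,t,s_2-t,\dots,s_m-t,t)$ of $(t)\lor\mathfrak{q}$ and $\mathfrak{q}\lor(t)$, rewrites both $\xi=\kappa_{s+t}\kappa_{\mathfrak{p}',s+t}a_{\mathfrak{p}',(t)\lor\mathfrak{q}}(\cdots)$ and $U_t^T(\xi\phi^{-\frac12}\xi^T(t))=\kappa_{s+t}\kappa_{\mathfrak{p}',s+t}a_{\mathfrak{p}',\mathfrak{q}\lor(t)}(\cdots)$ as explicit elementary tensors in the \emph{same} space $\mathcal{H}^T(\mathfrak{p}',s+t)$, and expands $\|U_t^T(\xi\phi^{-\frac12}\xi^T(t))-\xi\|^2$ via Proposition \ref{formula}; the resulting scalars involve only $T_t$ and $T_{s_i-t}$ applied to fixed elements of $M$, and their convergence as $t\to+0$ follows from the $\sigma$-weak continuity of $T$ (the paper cites \cite[Lemma A.2]{skei16} for this limit). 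Until you execute this computation, you have a plan, not a proof; the norm identity and the ``weak convergence plus convergence of norms'' device do not touch this step.

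There is also a genuine flaw in your reduction to a dense family. You need density of $\{\kappa_s(\eta)\mid s\geq0,\ \eta\in\mathcal{D}(\mathcal{H}_s^T;\phi)\}$ in $\mathcal{D}(\mathfrak{H}^T;\phi)$ with respect to the $\phi$-bound norm $\|\pi_\phi(\cdot)\|$, and you defer this to a lemma. That lemma is doubtful: for $\xi'=\kappa_s\kappa_s^*\xi$ one has $\pi_\phi(\xi-\xi')=(1-\kappa_s\kappa_s^*)\pi_\phi(\xi)$, and the strong convergence $\kappa_s\kappa_s^*\to1$ gives no operator-norm control of this quantity; approximation of an intertwiner in \emph{norm} by intertwiners with range inside a fixed $\kappa_s\mathcal{H}_s^T$ should not be expected in general. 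The good news is that your own norm identity repairs the reduction: applied to $\zeta=\xi-\xi'$ (which is $\phi$-bounded whenever $\xi$ is and $\xi'$ is a finite combination of elementary $\phi$-bounded vectors), it gives $\|S_t\zeta\|^2=\phi\bigl(T_t(\pi_\phi(\zeta)^*\pi_\phi(\zeta))\bigr)\to\|\zeta\|^2$, so that $\limsup_{t\to+0}\|S_t\xi-\xi\|\leq2\|\xi-\xi'\|$ once $S_t\xi'\to\xi'$ is known; thus only \emph{Hilbert-norm} density is needed, and one can take the dense family to be the paper's elementary tensors (for which the refinement computation is actually available), rather than general $\kappa_s(\eta)$, where your partition argument would not even apply. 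With that repair the extension step --- which the paper's own proof leaves implicit --- is fine, and what remains missing is precisely the refinement computation described above.
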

\begin{proof}
Suppose that $t<\min\{s_1,\cdots,s_m\}$ and $\xi=\kappa_s\kappa_{\mathfrak{q},s}((x_1\otimes_{s_1}y_1\phi^{\frac{1}{2}})\phi^{-\frac{1}{2}}\cdots\phi^{-\frac{1}{2}}(x_m\otimes_{s_m}y_m\phi^{\frac{1}{2}}))$ for some $s\geq0,\ \mathfrak{q}=(s_1,\cdots,s_m)\in\mathfrak{P}_s,\ x_1,\cdots,x_m,y_1,\cdots,y_m\in
M$. If we put $\mathfrak{p}'=(t,s_1-t,t,s_2-t,t,\cdots,s_m-t,t)$, then we have $\mathfrak{p}'\succ(t)\lor\mathfrak{p},\mathfrak{p}\lor(t)$. Now, we have
\begin{eqnarray*}
\xi&=&\kappa_{s+t}b_{s+t,s}^T\kappa_{\mathfrak{q},s}((x_1\otimes_{s_1}y_1\phi^{\frac{1}{2}})\phi^{-\frac{1}{2}}\cdots\phi^{-\frac{1}{2}}(x_m\otimes_{s_m}y_m\phi^{\frac{1}{2}}))\\
&=&\kappa_{s+t}\kappa_{(t)\lor\mathfrak{p},s+t}((1_M\otimes_t\phi^\frac{1}{2})\phi^{-\frac{1}{2}}(x_1\otimes_{s_1}y_1\phi^{\frac{1}{2}})\phi^{-\frac{1}{2}}\cdots\phi^{-\frac{1}{2}}(x_m\otimes_{s_m}y_m\phi^{\frac{1}{2}}))\\
&=&\kappa_{s+t}\kappa_{\mathfrak{p}',s+t} a_{\mathfrak{p}',(t)\lor\mathfrak{p}}((1_M\otimes_t\phi^\frac{1}{2})\phi^{-\frac{1}{2}}(x_1\otimes_{s_1}y_1\phi^{\frac{1}{2}})\phi^{-\frac{1}{2}}\cdots\phi^{-\frac{1}{2}}(x_m\otimes_{s_m}y_m\phi^{\frac{1}{2}}))\\
&=&\kappa_{s+t}\kappa_{\mathfrak{p}',s+t}((1_M\otimes_t\phi^\frac{1}{2})\phi^{-\frac{1}{2}}((x_1\otimes_{s_1-t}\phi^{\frac{1}{2}})\phi^{-\frac{1}{2}}(1_M\otimes_{t}y_1\phi^{\frac{1}{2}}))\phi^{-\frac{1}{2}}\\
&&\hspace{170pt}\cdots\phi^{-\frac{1}{2}}((x_m\otimes_{s_m-t}\phi^{\frac{1}{2}})\phi^{-\frac{1}{2}}(1_M\otimes_{t}y_m\phi^{\frac{1}{2}}))).
\end{eqnarray*}
On the other hand, we have
\begin{eqnarray*}
&&U_t(\xi\phi^{-\frac{1}{2}}\xi^T(t))=\kappa_{s+t}\kappa_{\mathfrak{q}\lor(t),s+t}((x_1\otimes_{s_1}y_1\phi^{\frac{1}{2}})\phi^{-\frac{1}{2}}\cdots\phi^{-\frac{1}{2}}(x_m\otimes_{s_m}y_m\phi^{\frac{1}{2}})\phi^{-\frac{1}{2}}(1_M\otimes_t\phi^\frac{1}{2}))\\
&&=\kappa_{s+t}\kappa_{\mathfrak{p}',s+t} a_{\mathfrak{p}',\mathfrak{p}\lor(t)}((x_1\otimes_{s_1}y_1\phi^{\frac{1}{2}})\phi^{-\frac{1}{2}}\cdots\phi^{-\frac{1}{2}}(x_m\otimes_{s_m}y_m\phi^{\frac{1}{2}})\phi^{-\frac{1}{2}}(1_M\otimes_t\phi^\frac{1}{2}))\\
&&=\kappa_{s+t}\kappa_{\mathfrak{p}',s+t}(((x_1\otimes_t\phi^\frac{1}{2})\phi^{-\frac{1}{2}}(1_M\otimes_{s_1-t}y_1\phi^{\frac{1}{2}}))\phi^{-\frac{1}{2}}\\
&&\hspace{120pt}\cdots\phi^{-\frac{1}{2}}((x_m\otimes_t\phi^{\frac{1}{2}})\phi^{-\frac{1}{2}}(1_M\otimes_{s_m-t}y_m\phi^{\frac{1}{2}}))\phi^{-\frac{1}{2}}(1_M\otimes_{t}\phi^{\frac{1}{2}})).
\end{eqnarray*}
By calculations of inner products and \cite[Lemma A.2]{skei16}, when $t$ tend to 0, we conclude that $U_t(\xi\phi^{-\frac{1}{2}}\xi^T(t))$ converges to $\xi$.
\end{proof}

The dilation of the pair $(H^T,\Xi^T)$ associated with a CP$_0$-semigroup gives a dilation of $T$ as follows:
\begin{theo}\label{dilation2}
Let $T$ be a CP$_0$-semigroup on a von Neumann algebra $M$ and $\theta$ the dilation of the pair $(H^T,\Xi^T)$. The triple $({\rm
End}(\mathfrak{H}_M^T),\pi^T(1_M),\theta)$ is a dilation of $T$. Moreover, if we denote by $N$ the von Neumann algebra generated by $\bigcup_{t\geq0}\theta_t(\pi^T(M))$, the triple $(N,\pi^T(1_M),\theta|_N)$ is the minimal dilation of $T$.
\end{theo}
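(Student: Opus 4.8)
```latex
The plan is to verify the two defining conditions of a dilation from Definition \ref{defdilation} for the triple $({\rm End}(\mathfrak{H}_M^T),\pi^T(1_M),\theta)$, namely that $\pi^T(1_M){\rm End}(\mathfrak{H}_M^T)\pi^T(1_M)$ is isomorphic to $M$ and that $T_t(x)=\pi^T(1_M)\theta_t(\pi^T(x))\pi^T(1_M)$ for all $x\in M$ and $t\geq0$, and then to verify the additional generation and central-support conditions for minimality. First I would identify the compression. Writing $p=\pi^T(1_M)$, the faithful representation \eqref{faithfulrepM} shows $p=\kappa_0\kappa_0^*$ is the projection of $\mathfrak{H}^T$ onto $\kappa_0(L^2(M))$. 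Since $\pi^T$ embeds $M$ faithfully into ${\rm End}(\mathfrak{H}_M^T)$ with $\pi^T(M)\subset p\,{\rm End}(\mathfrak{H}_M^T)\,p$, the main computation here is that compression by $p$ lands exactly on $\pi^T(M)$, i.e.\ $p\,{\rm End}(\mathfrak{H}_M^T)\,p=\pi^T(M)$; this follows because $a\mapsto \kappa_0^*a\kappa_0$ intertwines the right $M$-actions and $\kappa_0$ identifies $\kappa_0(L^2(M))$ with the standard form, so a right $M$-linear endomorphism of $L^2(M)$ is left multiplication by an element of $M$.

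Next I would establish the intertwining identity. The key is to unwind $\theta_t(\pi^T(x))=U_t^T(\pi^T(x)\otimes^M{\rm id}_{\mathcal{H}_t^T})(U_t^T)^*$ from \eqref{maxdilation} and compress it by $p=\kappa_0\kappa_0^*$. For $x\in M$ and a vector $\kappa_0(\phi^\frac{1}{2}y)$, I would compute, using the surjectivity description of $U_t^T$ from Theorem \ref{U_tgeneral} together with the unit relation $\xi^T(t)=\kappa_{(t),t}(1_M\otimes\phi^\frac12)$, that
\[
\langle p\,\theta_t(\pi^T(x))\,p\,\kappa_0(\phi^\frac12 y),\kappa_0(\phi^\frac12 z)\rangle
=\langle x\,\xi^T(t)\,y,\xi^T(t)\,z\rangle
=\langle T_t^{\Xi^T}(x)\phi^\frac12 y,\phi^\frac12 z\rangle,
\]
where the last equality is exactly \eqref{cpfromunit}. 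By Lemma \ref{CP0-semigroups} and the fact (established in the construction preceding Theorem \ref{relative productH}) that the CP$_0$-semigroup recovered from $\Xi^T$ is $T$ itself, this gives $p\,\theta_t(\pi^T(x))\,p=\pi^T(T_t(x))$, which is the required dilation identity once we transport everything through the isomorphism $M\cong p\,{\rm End}(\mathfrak{H}_M^T)\,p$.

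For the minimality clause I would let $N$ be the von Neumann algebra generated by $\bigcup_{t\geq0}\theta_t(\pi^T(M))$ and check the two conditions in Definition \ref{defdilation}. That $N$ is generated by $\theta_{[0,\infty)}(M)$ is immediate from its definition. The central-support condition, that the central support of $p$ in $N$ equals $1_N$, is where the generating property of the unit $\Xi^T$ from Theorem \ref{relative productH} enters: vectors of the form $U(\mathfrak{p})(x_1\xi^T(t_1)\phi^{-\frac12}\cdots\phi^{-\frac12}x_n\xi^T(t_n)y)$ span a dense subspace of each $\mathcal{H}_t^T$, hence of $\mathfrak{H}^T$, and each such vector is obtained from $\kappa_0(L^2(M))=p\mathfrak{H}^T$ by applying products of elements $\theta_{t}(\pi^T(x_i))$ and the cocycle-type pieces living in $N$. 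Thus $Np\mathfrak{H}^T$ is dense in $\mathfrak{H}^T$, which forces the central support of $p$ to be $1_N$. I expect \textbf{this last step to be the main obstacle}: one must argue carefully that the vectors produced by the generating property really lie in the closed linear span of $N p\,\mathfrak{H}^T$, which requires re-expressing the relative-tensor-product vectors $x_i\xi^T(t_i)$ inside $\mathfrak{H}^T$ in terms of the operators $U_t^T$ defining $\theta$ and the left action of $M$, and tracking the $\phi^{-\frac12}$ insertions through the inductive-limit embeddings $\kappa_{\mathfrak{p},t}$. The restriction $\theta|_N$ is automatically an E$_0$-semigroup since $N$ is invariant under $\theta$ by construction, so no separate argument is needed there.
```
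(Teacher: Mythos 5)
Your proposal is correct and takes essentially the same route as the paper: the dilation identity comes from compressing $\theta_t(\pi^T(x))=U_t^T(\pi^T(x)\otimes^M{\rm id}_{\mathcal{H}_t^T})(U_t^T)^*$ by $p=\kappa_0\kappa_0^*$ and computing inner products against the unit vectors $\kappa_{(t),t}(1_M\otimes_t\phi^{\frac{1}{2}}y)$ (the paper does this directly via Proposition \ref{formula}, while you route it through \eqref{cpfromunit} and the easy identity $T^{\Xi^T}=T$ --- the same computation), and minimality comes from density of $N p\,\mathfrak{H}^T$ forcing the central support of $p$ to be $1_N$. The step you flag as the main obstacle is exactly the paper's single displayed calculation, $\theta_{t}(\pi^T(x_1))\theta_{t-t_1}(\pi^T(x_2))\cdots\theta_{t_n}(\pi^T(x_n))\kappa_0\phi^{\frac{1}{2}}y=\kappa_t\kappa_{\mathfrak{p},t}((x_1\otimes_{t_1}\phi^{\frac{1}{2}})\phi^{-\frac{1}{2}}\cdots\phi^{-\frac{1}{2}}(x_n\otimes_{t_n}\phi^{\frac{1}{2}}y))$, carried out along just the lines you describe, and your explicit verification that $p\,{\rm End}(\mathfrak{H}^T_M)\,p=\pi^T(M)$ fills in a point the paper leaves implicit.
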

\begin{proof}
We shall show that $\pi^T(T_t(x))=p\theta_t(\pi^T(x))p$ for all $t\geq0$ and $x\in
M$. For all $y,z\in
M$, we have
\begin{eqnarray*}
&&\langle\phi^\frac{1}{2}y,\kappa_0^*\theta_t(\pi^T(x))\kappa_0\phi^\frac{1}{2}z)\rangle=\langle
(\kappa_0\phi^\frac{1}{2})\phi^{-\frac{1}{2}}\kappa_t^*\kappa_0\phi^\frac{1}{2}y,(\pi^T(x)\otimes^M{\rm
id}_{\mathcal{H}_t})(\kappa_0\phi^\frac{1}{2})\phi^{-\frac{1}{2}}\kappa_t^*\kappa_0\phi^\frac{1}{2}z\rangle\\
&&=\langle
(\kappa_0\phi^\frac{1}{2})\phi^{-\frac{1}{2}}\kappa_t^*\kappa_0\phi^\frac{1}{2}y,\kappa_0(x\phi^\frac{1}{2})\phi^{-\frac{1}{2}}\kappa_t^*\kappa_0\phi^\frac{1}{2}z\rangle=\langle
(\kappa_0\phi^\frac{1}{2})\phi^{-\frac{1}{2}}b_{t,0}^T\phi^\frac{1}{2}y,\kappa_0(x\phi^\frac{1}{2})\phi^{-\frac{1}{2}}b_{t,0}^T\phi^\frac{1}{2}z\rangle\\
&&=\langle
(\kappa_0\phi^\frac{1}{2})\phi^{-\frac{1}{2}}\kappa_{(t),t}(1_M\otimes_t\phi^\frac{1}{2}y),\kappa_0(x\phi^\frac{1}{2})\phi^{-\frac{1}{2}}\kappa_{(t),t}(1_M\otimes_t\phi^\frac{1}{2}z)\rangle=\langle
\phi^\frac{1}{2}y,T_t(x)\phi^\frac{1}{2}z\rangle.
\end{eqnarray*}
Thus we have $T_t(x)=\kappa_0^*\theta_t(\pi(x))\kappa_0$.

For $t\geq0,\ \mathfrak{p}=(t_1,\cdots,t_n)\in\mathfrak{P}_t,\ x_1,\cdots,x_n,y\in
M$, we can check that
\begin{eqnarray*}
&&\theta_{t}(\pi^T(x_1))\theta_{t-t_1}(\pi^T(x_2))\cdots\theta_{t_{n-1}+t_n}(\pi^T(x_{n-1}))\theta_{t_n}(\pi^T(x_n))\kappa_0\phi^{\frac{1}{2}}y\\
&&=\kappa_t\kappa_{\mathfrak{p},t}((x_1\otimes_{t_1}\phi^{\frac{1}{2}})\phi^{-\frac{1}{2}}(x_2\otimes_{t_2}\phi^{\frac{1}{2}})\phi^{-\frac{1}{2}}\cdots\phi^{-\frac{1}{2}}(x_{n-1}\otimes_{t_{n-1}}\phi^{\frac{1}{2}})\phi^{-\frac{1}{2}}(x_n\otimes_{t_n}\phi^{\frac{1}{2}}y)).
\end{eqnarray*}
Hence, we have $\overline{{\rm
span}}(N\pi(1_M)\mathfrak{H}^T)\supset\overline{{\rm
span}}(N\kappa_0L^2(M))=\mathfrak{H}^T$. Since the central support $c(\pi^T(1_M))$ of $\pi^T(1_M)$ in $N$ is the projection onto $\overline{{\rm
span}}(N\pi(1_M)\mathfrak{H}^T)$, we have $c(\pi^T(1_M))=1_N$. We conclude that the triplet $(N,\pi(1_M),\theta)$ is the minimal dilation of $T$.
\end{proof}
\begin{rema}
Let $\theta$ be the dilation of a CP$_0$-semigroup $T$ defined by (\ref{maxdilation}). When $s\leq
t$ and $\xi_s\in\mathcal{H}_s^T$, we have
\begin{eqnarray*}
&&\theta_t(a)\kappa_s\xi_s=U_t(a\otimes^M{\rm
id}_{\mathcal{H}_t^T})U_t^*\kappa_s\xi_s=U_t(a(\kappa_0\phi^{\frac{1}{2}})\phi^{-\frac{1}{2}}b_{t,s}\kappa_s\xi_s)
\end{eqnarray*}
for all $a\in{\rm
End}(\mathfrak{H}_M^T)$, and hence the operator $\theta_t(a)\kappa_s\xi_s$ is depend on only the image $a(\kappa_0\phi^{\frac{1}{2}})$.
\end{rema}
The following theorem asserts that the correspondence between algebraic CP$_0$-semigroups and generating unital units is one-to-one up to unit preserving isomorphism.
\begin{theo}\label{correspondenceCP}
Let $T$ be an algebraic CP$_0$-semigroup on $M$ and $\Xi$ a generating unital unit of a $\phi$-spatial product system $H$ of W$^*$-$M$-bimodule. Then, we have $T^{\Xi^T}=T$ and there exists an isomorphism from $H^{T^\Xi}$ onto $H$, which preserves the units $\Xi^{T^\Xi}$ and $\Xi$. 
\end{theo}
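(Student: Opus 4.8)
The statement has two halves, $T^{\Xi^T}=T$ and $H^{T^\Xi}\cong H$, and I would treat them separately. \emph{For the identity $T^{\Xi^T}=T$:} applying (\ref{cpfromunit}) to the unit $\Xi^T$ gives $T^{\Xi^T}_t(x)=\pi_\phi(\xi^T(t))^*\pi_\phi(x\xi^T(t))$, where $\xi^T(t)=\kappa_{(t),t}(1_M\otimes_t\phi^\frac{1}{2})$ and, by left $M$-linearity of the embedding, $x\xi^T(t)=\kappa_{(t),t}(x\otimes_t\phi^\frac{1}{2})$. Since $\kappa_{(t),t}$ is an isometry one has $\pi_\phi(\kappa_{(t),t}\zeta)=\kappa_{(t),t}\pi_\phi(\zeta)$ for every $\phi$-bounded $\zeta$, so $\pi_\phi(\kappa_{(t),t}\zeta_1)^*\pi_\phi(\kappa_{(t),t}\zeta_2)=\pi_\phi(\zeta_1)^*\pi_\phi(\zeta_2)$ and the embedding disappears. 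Proposition \ref{formula}, applied to the UCP-map $T_t$ with $x_1=y_1=y_2=1_M$ and $x_2=x$, then yields $\pi_\phi(1_M\otimes_t\phi^\frac{1}{2})^*\pi_\phi(x\otimes_t\phi^\frac{1}{2})=T_t(x)$, i.e. $T^{\Xi^T}_t=T_t$.

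\emph{For the isomorphism $H^{T^\Xi}\cong H$:} write $T=T^\Xi$. I would build the isomorphism $u^\otimes=\{u_t\}$, $u_t:\mathcal{H}^T_t\to\mathcal{H}_t$, starting on the trivial partition with the $M$-bilinear map $u_{(t)}:M\otimes_tL^2(M)\to\mathcal{H}_t$, $u_{(t)}(x\otimes_t y\phi^\frac{1}{2})=x\xi(t)y$. Using $\pi_\phi(x\xi(t))=x\pi_\phi(\xi(t))$ together with (\ref{cpfromunit}) one gets $\pi_\phi(x_1\xi(t))^*\pi_\phi(x_2\xi(t))=T_t(x_1^*x_2)$, hence
\[
\langle x_1\xi(t)y_1,x_2\xi(t)y_2\rangle=\phi(y_1^*T_t(x_1^*x_2)y_2)=\langle x_1\otimes_t y_1\phi^\frac{1}{2},x_2\otimes_t y_2\phi^\frac{1}{2}\rangle,
\]
so $u_{(t)}$ is an isometry. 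For $\mathfrak{p}=(t_1,\dots,t_n)\in\mathfrak{P}_t$ I would then set $u_\mathfrak{p}=U(\mathfrak{p})(u_{(t_1)}\otimes^M\cdots\otimes^M u_{(t_n)}):\mathcal{H}^T(\mathfrak{p},t)\to\mathcal{H}_t$, again an $M$-bilinear isometry.

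To descend to the inductive limit I must verify $u_\mathfrak{p}a_{\mathfrak{p},\mathfrak{q}}=u_\mathfrak{q}$ for $\mathfrak{p}\succ\mathfrak{q}$. This reduces, factor by factor, to $u_{\mathfrak{q}(s_i)}a_{\mathfrak{q}(s_i)}=u_{(s_i)}$: applying the tensored $u_{(\cdot)}$ to $a_{\mathfrak{q}(s_i)}(x\otimes_{s_i}y\phi^\frac{1}{2})$ gives $x\xi(s_{i,1})\phi^{-\frac{1}{2}}\cdots\phi^{-\frac{1}{2}}\xi(s_{i,k(i)})y$, and $U(\mathfrak{q}(s_i))$ collapses $\xi(s_{i,1})\phi^{-\frac{1}{2}}\cdots\phi^{-\frac{1}{2}}\xi(s_{i,k(i)})$ to $\xi(s_i)$ by the unit relation of Definition \ref{units}, leaving $x\xi(s_i)y=u_{(s_i)}(x\otimes_{s_i}y\phi^\frac{1}{2})$. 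Thus $\{u_\mathfrak{p}\}$ induces an isometry $u_t$ with $u_t\kappa_{\mathfrak{p},t}=u_\mathfrak{p}$; its range contains all $U(\mathfrak{p})(x_1\xi(t_1)\phi^{-\frac{1}{2}}\cdots\phi^{-\frac{1}{2}}x_n\xi(t_n)y)$, which are dense by the generating hypothesis, so $u_t$ is a unitary.

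Finally, the isomorphism property (\ref{isomorphism}) in the form $U_{s,t}(u_s\otimes^M u_t)=u_{s+t}U_{s,t}^T$ follows by evaluating on $(\kappa_{\mathfrak{q},s}\xi_\mathfrak{q})\phi^{-\frac{1}{2}}(\kappa_{\mathfrak{p},t}\eta_\mathfrak{p})$: the definition of $U_{s,t}^T$ from the proof of Theorem \ref{relative productH} concatenates to $\mathfrak{q}\lor\mathfrak{p}$, while iterating (\ref{productsystemunitary}) gives $U(\mathfrak{q}\lor\mathfrak{p})=U_{s,t}(U(\mathfrak{q})\otimes^M U(\mathfrak{p}))$, and the two sides agree. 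Unit preservation is immediate, $u_t(\xi^T(t))=u_{(t)}(1_M\otimes_t\phi^\frac{1}{2})=\xi(t)$. I expect the compatibility $u_\mathfrak{p}a_{\mathfrak{p},\mathfrak{q}}=u_\mathfrak{q}$ to be the main obstacle: it is exactly here that the unit relation of $\Xi$ is indispensable, and one must carefully match the tensor factorizations of the $u_{(\cdot)}$ with the associativity of the $U(\mathfrak{p})$ so that the refined and unrefined generators are sent to the same vector of $\mathcal{H}_t$.
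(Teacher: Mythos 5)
Your plan has the same skeleton as the paper's proof: the paper defines $u_t$ directly on the vectors $\kappa_{\mathfrak{p},t}((x_1\otimes_{t_1}\phi^{\frac{1}{2}})\phi^{-\frac{1}{2}}\cdots\phi^{-\frac{1}{2}}(x_n\otimes_{t_n}\phi^{\frac{1}{2}}y))$, sending them to $U(\mathfrak{p})(x_1\xi(t_1)\phi^{-\frac{1}{2}}\cdots\phi^{-\frac{1}{2}}x_n\xi(t_n)y)$, which is exactly your $U(\mathfrak{p})\circ(u_{(t_1)}\otimes^M\cdots\otimes^M u_{(t_n)})$ evaluated on such vectors; isometry plus the generating hypothesis give unitarity, and associativity gives (\ref{isomorphism}), as you outline. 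Your derivation of $T^{\Xi^T}=T$ from Proposition \ref{formula} is correct and usefully fills in what the paper dismisses as clear. However, your verification that $u_{(t)}$ is an isometry contains a genuine error: you conflate the left multiplication $y\phi^{\frac{1}{2}}$ with the right action. In the paper's conventions $\xi(t)y$ is the right $M$-action, i.e. $\xi(t)y=\pi_\phi(\xi(t))(\phi^{\frac{1}{2}}y)$ with $\phi^{\frac{1}{2}}y$ the right-multiplication vector, so that
\[
\langle x_1\xi(t)y_1,x_2\xi(t)y_2\rangle=\langle\phi^{\frac{1}{2}}y_1,T_t(x_1^*x_2)\phi^{\frac{1}{2}}y_2\rangle,
\]
whereas the inner product you must match is
\[
\langle x_1\otimes_t y_1\phi^{\frac{1}{2}},x_2\otimes_t y_2\phi^{\frac{1}{2}}\rangle=\langle y_1\phi^{\frac{1}{2}},T_t(x_1^*x_2)y_2\phi^{\frac{1}{2}}\rangle=\phi(y_1^*T_t(x_1^*x_2)y_2).
\]
These disagree for a general faithful normal state $\phi$: taking $x_1=x_2=1_M$ and $y_1=y_2=y$, unitality of $\Xi$ gives $\|\xi(t)y\|^2=\|\phi^{\frac{1}{2}}y\|^2=\phi(yy^*)$, while $\|1_M\otimes_t y\phi^{\frac{1}{2}}\|^2=\phi(y^*y)$. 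So your elementary map $u_{(t)}(x\otimes_t y\phi^{\frac{1}{2}})=x\xi(t)y$ is an isometry only when $\phi$ is tracial, and the theorem is stated for an arbitrary faithful normal state.

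The repair is local but must be propagated. Define instead $u_{(t)}(x\otimes_t\eta)=x\,\pi_\phi(\xi(t))\eta$ for all $\eta\in L^2(M)$; then $\langle x_1\pi_\phi(\xi(t))\eta_1,x_2\pi_\phi(\xi(t))\eta_2\rangle=\langle\eta_1,T_t(x_1^*x_2)\eta_2\rangle$ holds with no left/right ambiguity, and on right-multiplication vectors this is exactly the paper's assignment $x\otimes_t\phi^{\frac{1}{2}}y\mapsto x\xi(t)y$ (note the paper's generating set and its definition of $u_t$ both put $\phi^{\frac{1}{2}}y$, not $y\phi^{\frac{1}{2}}$, in the last slot). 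The correction touches the compatibility step $u_{\mathfrak{p}}a_{\mathfrak{p},\mathfrak{q}}=u_{\mathfrak{q}}$, which you rightly single out as the crux: the paper's $a_{\mathfrak{q}(s_i)}$ places the left-multiplication vector $y\phi^{\frac{1}{2}}$ in its final tensor factor, so under the corrected map that factor goes to $\pi_\phi(\xi(s_{i,k(i)}))(y\phi^{\frac{1}{2}})$, \emph{not} to $\xi(s_{i,k(i)})y$. The collapse via the unit relation still works because $U_{s,t}(\xi(s)\phi^{-\frac{1}{2}}\pi_\phi(\xi(t))\eta)=\pi_\phi(\xi(s+t))\eta$ for every $\eta\in L^2(M)$: check it on $\eta=\phi^{\frac{1}{2}}y$ from Definition \ref{units} and right linearity, then extend by continuity, the relevant maps being isometric by unitality. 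With these changes your argument goes through and coincides with the paper's proof.
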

\begin{proof}
It is clear that $T^{\Xi^T}=T$. For $t\geq0$ and a partition $\mathfrak{p}=(t_1,\cdots,t_n)\in\mathfrak{P}_t$, we define a map $u_t:\mathcal{H}_t^{\Xi^T}\to\mathcal{H}_t$ by
\begin{eqnarray*}
&&u_t(\kappa_{\mathfrak{p},t}((x_1\otimes_{t_1}\phi^\frac{1}{2})\phi^{-\frac{1}{2}}\cdots\phi^{-\frac{1}{2}}(x_{n-1}\otimes_{t_{n-1}}\phi^\frac{1}{2})\phi^{-\frac{1}{2}}(x_n\otimes_{t_n}\phi^\frac{1}{2}y)))\\
&&=U(\mathfrak{p})(x_1\xi(t_1)\phi^{-\frac{1}{2}}\cdots
\phi^{-\frac{1}{2}}x_{n-1}\xi(t_{n-1})\phi^{-\frac{1}{2}}x_n\xi(t_n)y)
\end{eqnarray*}
for each $x_1,\cdots,x_n,y\in
M$, where $\kappa_{\mathfrak{p},t}:\mathcal{H}^{T^\Xi}(\mathfrak{p},t)\to\mathcal{H}_t^{T^\Xi}$ is the canonical embedding. We can check that $u_t$ is an isometry. Since $\Xi$ is generating, $u_t$ can be extended as unitary from $\mathcal{H}_t^{T^\Xi}$ onto $\mathcal{H}_t$.

We must show that $U_{s,t}((u_s\xi_s)\phi^{-\frac{1}{2}}(u_t\eta_t))=u_{s+t}U_{s,t}^{T^\Xi}(\xi_s\phi^{-\frac{1}{2}}\eta_t)$ for all $\xi_s\in\mathcal{D}(\mathcal{H}_s^{T^\Xi};\phi)$ and all $\eta_t\in\mathcal{H}_t^{T^\Xi}$. It enough to show it for 
\begin{eqnarray*}
&&\xi_s=\kappa_{\mathfrak{q},s}((x_1\otimes_{s_1}\phi^\frac{1}{2})\phi^{-\frac{1}{2}}\cdots\phi^{-\frac{1}{2}}(x_{m-1}\otimes_{s_{m-1}}\phi^\frac{1}{2})\phi^{-\frac{1}{2}}(x_m\otimes_{s_m}\phi^\frac{1}{2})),\\
&&\eta_t=\kappa_{\mathfrak{p},t}((z_1\otimes_{t_1}\phi^\frac{1}{2})\phi^{-\frac{1}{2}}\cdots\phi^{-\frac{1}{2}}(z_{n-1}\otimes_{t_{n-1}}\phi^\frac{1}{2})\phi^{-\frac{1}{2}}(z_n\otimes_{t_n}\phi^\frac{1}{2}w)),
\end{eqnarray*}
where $\mathfrak{q}=(s_1,\cdots,s_m)\in\mathfrak{P}_s,\ \mathfrak{p}=(t_1,\cdots,t_n)\in\mathfrak{P}_t$ and $x_1,\cdots,x_m,z_1,\cdots,z_n,$ $w\in
M$. We put $\zeta_1=x_1\xi(s_1)\phi^{-\frac{1}{2}}\cdots
\phi^{-\frac{1}{2}}x_{m}\xi(s_{m}),\ \zeta_2=z_1\xi(t_1)\phi^{-\frac{1}{2}}\cdots
\phi^{-\frac{1}{2}}z_{n-1}\xi(t_{n-1})\phi^{-\frac{1}{2}}z_n\xi(t_n)w.$ By the associativity of $\{U_{s,t}\}_{s,t\geq0}$, we have
\begin{eqnarray*}
&&u_{s+t}U_{s,t}^{T^\Xi}(\xi_s\phi^{-\frac{1}{2}}\eta_t)\\
&&=U_{s_1,s_1'+t}({\rm
id}_{s_1}\otimes
U_{s_2,s_2'+t})({\rm
id}_{s_1,s_2}\otimes
U_{s_3,s_3'+t})\cdots({\rm
id}_{s_1,\cdots,s_{m-1}}\otimes
U_{s_m,t})(\zeta_1\phi^{-\frac{1}{2}}U(\mathfrak{p})\zeta_2)\\
&&=U_{s,t}(U_{s_1,s_1'}\otimes{\rm
id}_t)({\rm
id}_{s_1}\otimes
U_{s_2,s_2'}\otimes{\rm
id}_{t})({\rm
id}_{s_1,s_2}\otimes
U_{s_3,s_3'}\otimes{\rm
id}_{t})\\
&&\hspace{160pt}\cdots({\rm
id}_{s_1,\cdots,s_{n-2}}\otimes
U_{s_{n-1},s_{n-1}'}\otimes{\rm
id}_{t})(\zeta_1\phi^{-\frac{1}{2}}U(\mathfrak{p})(\zeta_2))\\
&&=U_{s_1,s_1'+t}({\rm
id}_{s_1}\otimes
U_{s_2,s_2'+t})\cdots({\rm
id}_{s_1,\cdots,s_{m-1}}\otimes
U_{s_m,t})({\rm
id}_{s_1,\cdots,s_m}\otimes
U_{t_1,t_1'})\nonumber\\
&&\hspace{170pt}\cdots({\rm
id}_{s_1,\cdots,s_m}\otimes{\rm
id}_{t_1,\cdots,t_{n-2}}\otimes
U_{t_{n-1},t_{n-1}'})(\zeta_1\phi^{-\frac{1}{2}}\zeta_2)\\
&&=U_{s,t}((u_s\xi_s)\phi^{-\frac{1}{2}}(u_t\eta_t)).
\end{eqnarray*}
We conclude that $\{u_t\}_{t\geq0}$ gives an isomorphism.
\end{proof}
As a corollary of Theorem \ref{correspondenceCP}, it will turn out that every weakly continuous generating unit is continuous. For this, we shall show that an isomorphism between product systems of W$^*$-bimodules induces a unitary between their inductive limits as follows: let $H=\{\mathcal{H}_t\}_{t\geq0}$ and $K=\{\mathcal{K}_t\}_{t\geq0}$ be product systems of W$^*$-$M$-bimodules with unital units $\Xi$ and $\Lambda$, respectively. Suppose a family $=\{u_t\}_{t\geq0}$ is an isomorphism from $H$ onto $K$. Then, we can define the canonical right $M$-linear unitary $u$ from the inductive limit $\mathfrak{H}$ of $(H,\Xi)$ onto the one $\mathfrak{K}$ of $(K,\Lambda)$ by $u(\kappa_t^H(\xi_t))=\kappa_t^Ku_t(\xi_t)$ for each $t\geq0$ and $\xi_t\in\mathcal{H}_t$, where $\kappa_t^H:\mathcal{H}_t\to\mathfrak{H}$ and $\kappa_t^K:\mathcal{K}_t\to\mathfrak{K}$ are the canonical embedding.

\begin{coro}\label{strongcontinuity}
If $\Xi$ is a weakly continuous generating unital unit of a product system $H$ of W$^*$-bimodules, then $\Xi$ is continuous.
\end{coro}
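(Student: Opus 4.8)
The plan is to transport the continuity of the canonical unit across the unit-preserving isomorphism supplied by Theorem \ref{correspondenceCP}, thereby reducing the statement to Proposition \ref{continuityp}. Since $\Xi$ is weakly continuous, Theorem \ref{condition for continuity} guarantees that the associated semigroup $T^\Xi$ is genuinely a CP$_0$-semigroup; Proposition \ref{continuityp} then applies to $T^\Xi$ and shows that the canonical unit $\Xi^{T^\Xi}$ of the product system $H^{T^\Xi}$ is continuous in the sense of (\ref{continuity2}). By Theorem \ref{correspondenceCP} there is an isomorphism $\{u_t\}_{t\geq0}$ from $H^{T^\Xi}$ onto $H$ carrying $\Xi^{T^\Xi}$ to $\Xi$, so that $u_t(\xi^{T^\Xi}(t))=\xi(t)$ for all $t\geq0$. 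It therefore suffices to prove that an isomorphism of product systems which sends one continuous unit to another preserves continuity.

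The crux is the behaviour of the induced unitary on the inductive limits. As recalled just above, $\{u_t\}$ induces a right $M$-linear unitary $u\colon\mathfrak{H}^{T^\Xi}\to\mathfrak{H}$ determined by $u(\kappa_t^{T^\Xi}\zeta)=\kappa_t u_t(\zeta)$. I would first establish the intertwining relation
\[
u\,U_t^{T^\Xi}=U_t\,(u\otimes^M u_t),\qquad t\geq0,
\]
where $U_t^{T^\Xi}$ and $U_t$ denote the unitaries of Theorem \ref{U_tgeneral} attached to $(H^{T^\Xi},\Xi^{T^\Xi})$ and $(H,\Xi)$. This is verified on the spanning vectors $(\kappa_s^{T^\Xi}\eta_s)\phi^{-\frac{1}{2}}\zeta_t$ by combining the defining formula for $U_t^{T^\Xi}$, the isomorphism identity (\ref{isomorphism}) written as $u_{s+t}U_{s,t}^{T^\Xi}=U_{s,t}(u_s\otimes^M u_t)$, and the definition of $u$; both sides collapse to $\kappa_{s+t}U_{s,t}(u_s\eta_s\,\phi^{-\frac{1}{2}}\,u_t\zeta_t)$.

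Granting the intertwining relation, the transfer is immediate. For $\xi\in\mathcal{D}(\mathfrak{H};\phi)$ the vector $u^*\xi$ lies in $\mathcal{D}(\mathfrak{H}^{T^\Xi};\phi)$, since a right $M$-linear unitary preserves $\phi$-boundedness, so continuity of $\Xi^{T^\Xi}$ yields $U_t^{T^\Xi}((u^*\xi)\phi^{-\frac{1}{2}}\xi^{T^\Xi}(t))\to u^*\xi$ as $t\to+0$. Applying $u$ and invoking the intertwining relation together with $u_t(\xi^{T^\Xi}(t))=\xi(t)$, the left-hand side equals $U_t(\xi\,\phi^{-\frac{1}{2}}\,\xi(t))$, so that $U_t(\xi\,\phi^{-\frac{1}{2}}\,\xi(t))\to\xi$; this is exactly (\ref{continuity2}), and $\Xi$ is continuous. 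The only genuinely new computation is the intertwining relation above, and I expect verifying that $u$ conjugates $U_t^{T^\Xi}$ into $U_t$ to be the main point, while everything else is an assembly of the quoted results.
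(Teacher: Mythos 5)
Your proposal is correct and follows essentially the same route as the paper: it reduces the claim to Proposition \ref{continuityp} via the unit-preserving isomorphism of Theorem \ref{correspondenceCP}, establishes the intertwining relation $uU_t^{T^\Xi}=U_t(u\otimes^M u_t)$ for the induced unitary on inductive limits, and transports the convergence through $u$. You are in fact slightly more explicit than the paper in noting that the weak continuity hypothesis enters via Theorem \ref{condition for continuity} to make $T^\Xi$ a genuine CP$_0$-semigroup before Proposition \ref{continuityp} can be applied.
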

\begin{proof}
Let $U_t^{T^\Xi}$ be the unitary giving the right W$^*$-$M$-module isomorphism $\mathfrak{H}^{T^\Xi}\otimes^M\mathcal{H}_t^{T^\Xi}\to\mathfrak{H}^{T^\Xi}$ for each $t\geq0$. By Proposition \ref{continuityp}, the unit $\Xi^{T^\Xi}=\{\xi^{T^\Xi}(t)\}_{t\geq0}$ satisfies
\begin{eqnarray}\label{continuity3}
U_t^{T^\Xi}(\xi\phi^{-\frac{1}{2}}\xi^{T^\Xi}(t))\to\xi\hspace{20pt}(t\to+0)
\end{eqnarray}
for all $\xi\in\mathcal{D}(\mathcal{H}^{T^\Xi};\phi)$. Suppose $\mathfrak{H}$ and $\mathfrak{H}^{T^\Xi}$ are the inductive limit of $(H,\Xi)$ and $(H^{T^\Xi},\Xi^{T^\Xi})$, respectively, and $\kappa_t:\mathcal{H}_t\to\mathfrak{H}$ and $\kappa_t^{T^\Xi}:\mathcal{H}_t^{T^\Xi}\to\mathfrak{H}^{T^\Xi}$ are the canonical embedding for each $t\geq0$. Let $u$ be the unitary from $\mathfrak{H}^{T^\Xi}$ onto $\mathfrak{H}$ induced from the isomorphism $\{u_t\}_{t\geq0}$ which is obtained by Theorem \ref{correspondenceCP}. For each $\xi_s\in\mathcal{H}_s^{T^\Xi}$, we have $uU_t^{T^\Xi}(\kappa_s^{T^\Xi}\xi_s\phi^{-\frac{1}{2}}\xi^{T^\Xi}(t))=U_t((u\kappa_s^{T^\Xi}\xi_s)\phi^{-\frac{1}{2}}u_t\xi^{T^\Xi}(t))$ by (\ref{isomorphism}) and (\ref{isomorphism}). Thus, by (\ref{continuity3}) and the fact that $\{u_t\}_{t\geq0}$ is unit preserving, we have $\|U_t(\xi\phi^{-\frac{1}{2}}\xi(t))-\xi\|=\|U_t((uu^*\xi)\phi^{-\frac{1}{2}}(u_tu_t^*\xi(t)))-\xi\|=\|U_t^{T^\Xi}((u^*\xi)\phi^{-\frac{1}{2}}\xi^{T^\Xi}(t))-u^*\xi\|$, and hence $\Xi$ is continuous. 
\end{proof}

\begin{rema}
Let $T$ be a CP$_0$-semigroup on a von Neumann algebra acting on a separable Hilbert space $\mathcal{H}$. The product system $H^T=\{\mathcal{H}_t^T\}_{t\geq0}$ of W$^*$-bimodules associated with $T$ gives a relation between Bhat-Skeide's {\rm(\cite{bhat-skei00})} and Muhly-Solel's {\rm(\cite{muhl-sole02})} constructions of the minimal dilation of $T$ as follows: a common point of the two methods is to establish a product system of von Neumann bimodules which has an information of $T$ by taking the inductive limits with respect to refinements of partitions. Let $\{E_t^T\}_{t\geq0}$ and $\{E^T(t)\}_{t\geq0}$ be the product systems of von Neumann bimodules associated with $T$ appearing in Bhat-Skeide's and Muhly-Solel's constructions, respectively. Note that each $E_t^T$ is a von Neumann $M$-bimodule and each $E^T(t)$ is a von Neumann $M'$-bimodule. By the correspondence between W$^*$-bimodules and von Neumann bimodules {\rm(}see {\rm\cite[Section 2]{skei03})}, we have a correspondence.
\begin{equation}\label{relationbsms}
E_t\longleftrightarrow\mathcal{H}_t^T,\ E(t)\longleftrightarrow\mathcal{H}^*\otimes^M\mathcal{H}_t^T\otimes^M\mathcal{H}.
\end{equation}
for each $t\geq0$. This is an extension to the continuous case of the relation in the discrete case given by {\rm\cite{sawa17}} and the proof of the correspondence {\rm(\ref{relationbsms})} is essentially the same.
\end{rema}

\section{Heat semigroups on manifolds and product systems}\label{Heat semigroups on manifolds and product systems}
In this section, we will consider the product system associated a heat semigroup $T$ and a dilation of $T$.

We shall recall the concept of heat semigroup on a compact Riemannian manifold. We refer the reader to \cite{grig} for their general theory. Let $\mathcal{M}$ be a compact Riemannian manifold with the normalized Riemannian measure $\mu$ associated with $\mathcal{M}$. We can define the self-adjoint positive (unbounded) operator $\Delta$ on $L^2(\mathcal{M})$ like the Laplacian on the Euclid space. The operator $\Delta$ is called the Laplacian (or Dirichlet Laplacian) on $\mathcal{M}$. The semigroup $T=\{T_t\}_{t\geq0}$ of bounded operators on $L^2(\mathcal{M})$ defined by $T_t=e^{-t\Delta}$ for each $t\geq0$, is called the heat semigroup on $\mathcal{M}$. 

For $t>0$, it is known that there exists a measurable function $p_t$ on $\mathcal{M}\times\mathcal{M}$ such that
\begin{equation}\label{heatkernel}
(T_tf)(x)=\int_\mathcal{M}p_t(x,y)f(y)d\mu(y)
\end{equation}
for each $f\in
L^2(\mathcal{M})$. The family $\{p_t\}_{t>0}$ called the heat kernel on $\mathcal{M}$ has the following properties:
\begin{enumerate}[(1)]
\item
$p_t(x,y)=p_t(y,z)\geq0\ (t>0,\ x,y\in\mathcal{M}).$
\item
$\int_\mathcal{M}p_t(x,y)d\mu(y)=1\ (t>0,\ x\in\mathcal{M}).$
\item
$p_{s+t}(x,z)=\int_\mathcal{M}p_t(x,y)p_t(y,z)d\mu(z)\ (s,t>0,\ x,z\in\mathcal{M}).$
\end{enumerate}
The equation (\ref{heatkernel}) enables as to extend the heat semigroup $T$ to a semigroup on $L^p(\mathcal{M})$ for each $1\leq
p\leq\infty$. We have the following continuity with respect to parameters:
\begin{equation}\label{heatcontinuity}
T_tf\overset{L^p}{\to}f\ (t\to+0)
\end{equation}
for each $f\in
L^p(\mathcal{M})$ and $p=1,2$.

The heat semigroup $T$ on $\mathcal{M}$ is a CP$_0$-semigroup on the commutative von Neumann algebra $M=L^\infty(\mathcal{M})$. A Noncommutative Laplacian and the associated CP$_0$-semigroup on a type I factor are discussed in \cite[Chapter 7]{arve03}. They are noncommutative analogies of the Laplacian on and heat semigroup on a manifold.

Now, we shall compute the product system $H^T$ of W$^*$-bimodules associated with the heat semigroup $T$ on $M=L^\infty(\mathcal{M})$. For this, we introduce the following notations.
\begin{defi}\label{M_p,mu_p}
For $t>0$ and $\mathfrak{p}=(t_1,\cdots,t_n)\in\mathfrak{P}_t$, we define a probability measure $\mu_{\mathfrak{p}}$ on $\mathcal{M}_\mathfrak{p}=\mathcal{M}^{n+1}$ by
\[
\mu_{\mathfrak{p}}=p_{t_1}(x_1,x_2)p_{t_2}(x_2,x_3)\cdots
p_{t_{n-1}}(x_{n-1},x_n)p_{t_n}(x_n,x_{n+1})\mu^{n+1}.
\]
For convenience, we define as $\mathcal{M}_{()}=\mathcal{M}$ and $\mu_{()}=\mu$ for the empty partition $()$.
\end{defi}
\begin{defi}
Let $s,t>0,\ \mathfrak{p}\in\mathfrak{P}_s,\ \mathfrak{q}\in\mathfrak{P}_t$ with $\#\mathfrak{p}=m,\ \#\mathfrak{q}=n$, and $f_\mathfrak{p},g_\mathfrak{q}$ and $h$ be functions on $\mathcal{M}_\mathfrak{p},\mathcal{M}_\mathfrak{q}$ and $\mathcal{M}$, respectively. We define functions $f_\mathfrak{p}\Box
g_\mathfrak{q},f_\mathfrak{p}\Box
h$ and $h\Box
g_\mathfrak{q}$ on $\mathcal{M}_{\mathfrak{p}\lor\mathfrak{q}},\mathcal{M}_\mathfrak{p}$ and $\mathcal{M}_\mathfrak{q}$, respectively, by
\begin{eqnarray*}
&&(f_\mathfrak{p}\Box
g_\mathfrak{q})(x_1,\cdots,x_m,y_1,\cdots,y_n,z)=f_\mathfrak{p}(x_1,\cdots,x_m,y_1)g_\mathfrak{q}(y_1,\cdots,y_n,z),\\
&&(f_\mathfrak{p}\Box
h)(x_1,\cdots,x_m,x_{m+1})=f_\mathfrak{p}(x_1,\cdots,x_m,x_{m+1})h(x_{m+1}),\\
&&(h\Box
g_\mathfrak{q})(y_1,\cdots,y_n,y_{n+1})=h(y_1)g_\mathfrak{q}(y_1,\cdots,y_n,y_{n+1}).
\end{eqnarray*}
for $x_i,y_j,z\in\mathcal{M}$.
\end{defi}
For $t>0$ and $\mathfrak{p}\in\mathfrak{P}_t$, the Hilbert space $L^2(\mathcal{M}_\mathfrak{p},\mu_{\mathfrak{p}})$ has a canonical W$^*$-$M$-bimodule structure given by $gf=g\Box
f$ and $f\Box
g$ for each $f\in
L^2(\mathcal{M}_\mathfrak{p},\mu_{\mathfrak{p}})$ and $g\in
M$. Then, we can obtain the following identification as W$^*$-$M$-bimodules.

\begin{prop}\label{H^T(p,t)L^2}
For $t>0$ and $\mathfrak{p}\in\mathfrak{P}_t$, we have an isomorphism $H^T(\mathfrak{p},t)\cong
L^2(\mathcal{M}_\mathfrak{p},\mu_{\mathfrak{p}})$ as W$^*$-$M$-bimodules.
\end{prop}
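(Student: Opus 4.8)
The plan is to identify $\mathcal{H}^T(\mathfrak{p},t)$ with $L^2(\mathcal{M}_\mathfrak{p},\mu_\mathfrak{p})$ by computing the inner products of the generating vectors of $\mathcal{H}^T(\mathfrak{p},t)$ explicitly and matching them with the inner products of product functions in $L^2(\mathcal{M}_\mathfrak{p},\mu_\mathfrak{p})$. Since the relative tensor product, and hence the bimodule $\mathcal{H}^T(\mathfrak{p},t)$, does not depend on the choice of faithful normal state (see the remark following the definition of the relative tensor product), I will carry out every computation with the tracial state $\tau(g)=\int_{\mathcal{M}}g\,d\mu$ on $M=L^\infty(\mathcal{M})$. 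For $\tau$ the vector $\tau^{\frac{1}{2}}$ is the constant function $1$ and $L^2(M)=L^2(\mathcal{M},\mu)$, so the Radon--Nikodym density drops out of all formulas and the natural juxtaposition maps become isometric on the nose.

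Concretely, I would send the generating vector $\Psi=(x_1\otimes_{t_1}y_1)\,\tau^{-\frac{1}{2}}\cdots\tau^{-\frac{1}{2}}(x_n\otimes_{t_n}y_n)$ of $\mathcal{H}^T(\mathfrak{p},t)$, with $\mathfrak{p}=(t_1,\dots,t_n)$ and $x_i,y_i\in M$, to the function $F_\Psi$ on $\mathcal{M}_\mathfrak{p}=\mathcal{M}^{n+1}$ given by $F_\Psi(z_1,\dots,z_{n+1})=x_1(z_1)\,y_n(z_{n+1})\prod_{i=1}^{n-1}y_i(z_{i+1})x_{i+1}(z_{i+1})$. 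This is exactly the map gluing the right variable of the $i$-th block to the left variable of the $(i+1)$-th block, i.e. it realizes the $\Box$-juxtaposition of the blocks, and by construction it intertwines the $M$-actions with the left and right $\Box$-actions on $L^2(\mathcal{M}_\mathfrak{p},\mu_\mathfrak{p})$ (multiplication in the first, resp. last, coordinate).

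The main step is to check that $\Psi\mapsto F_\Psi$ preserves inner products. For a single block $M\otimes_sL^2(M)$, the formula $\langle x\otimes\xi,y\otimes\eta\rangle=\langle\xi,T_s(x^*y)\eta\rangle$ and the kernel representation $(T_sf)(z)=\int_{\mathcal{M}}p_s(z,w)f(w)\,d\mu(w)$ give the inner product as an integral against $p_s(z_2,z_1)\mu^2$; the symmetry $p_s(z_1,z_2)=p_s(z_2,z_1)$ (property (1)) turns this into the $L^2(\mu_{(s)})$-inner product, settling the case $n=1$. For general $n$ I would iterate Proposition \ref{formula}: each relative tensor product contributes a factor $\pi_\tau(\,\cdot\,)^*\pi_\tau(\,\cdot\,)$, which by the iterate of that proposition is multiplication by the element of $M$ obtained by integrating one block against its heat kernel over all but the glued coordinate. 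Collapsing the $n$ blocks from one end then produces exactly the chained kernel $p_{t_1}(z_1,z_2)\cdots p_{t_n}(z_n,z_{n+1})$, that is, the density of $\mu_\mathfrak{p}$, so $\langle\Psi,\Psi'\rangle=\langle F_\Psi,F_{\Psi'}\rangle_{L^2(\mu_\mathfrak{p})}$. Well-definedness and $M$-bilinearity follow from the balancing relation $fb\,\tau^{-\frac{1}{2}}g=f\,\tau^{-\frac{1}{2}}bg$, and surjectivity follows from the density of product functions $f\Box g$ in $L^2(\mu_\mathfrak{p})$ together with the density of $\tau$-bounded vectors; hence $\Psi\mapsto F_\Psi$ extends to the desired bimodule unitary.

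The hard part is the bookkeeping of the state inside the relative tensor product. For a general $\phi$ the operator $\pi_\phi(f)^*\pi_\phi(f')$ carries an extra factor $\rho^{-1}$, with $\rho=d\phi/d\mu$, concentrated at the glued coordinate, which would force a compensating $\rho^{-\frac{1}{2}}$ into the identifying map and clutter the chaining of the kernels; passing to the tracial state $\tau$, as above, is precisely what makes the identification transparent. Finally, I would note that beyond the positivity of $p_t$ (needed only so that $\mu_\mathfrak{p}$ is a genuine measure) the sole property of the heat kernel used is its symmetry, in the base case; in particular the Chapman--Kolmogorov relation (property (3)) plays no role here, since it enters instead when comparing different partitions, e.g. in the isometry of the connecting maps $a_{\mathfrak{p},\mathfrak{q}}$.
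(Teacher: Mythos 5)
Your proof is correct and takes essentially the same route as the paper's: the identical gluing map on generating vectors, isometry via iterated application of Proposition \ref{formula} with the canonical trace $\tau$, and surjectivity from the density of sums of products of single-variable functions in $L^2(\mathcal{M}_\mathfrak{p},\mu_{\mathfrak{p}})$ (which the paper spells out via Stone--Weierstrass). Your supplementary observations---that working with $\tau$ keeps the density bookkeeping trivial, and that only symmetry of the heat kernel, not the Chapman--Kolmogorov identity, is needed for a fixed partition---are accurate but do not change the argument.
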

\begin{proof}
Let $\tau$ be the canonical faithful normal trace on $M=L^\infty(\mathcal{M})$ given by integrals on $\mathcal{M}$. Suppose $\mathfrak{p}=(t_1,\cdots,t_n)$. We define a $M$-bilinear map $u_{\mathfrak{p},t}:H^T(\mathfrak{p},t)\to
L^2(\mathcal{M}_\mathfrak{p},\mu_{\mathfrak{p}})$ by
\[
u_{\mathfrak{p},t}((f_1\otimes_{t_1}g_1\tau^\frac{1}{2})\tau^{-\frac{1}{2}}\cdots\tau^{-\frac{1}{2}}(f_n\otimes_{t_n}g_n))=f_1(x_1)g_1(x_2)f_2(x_2)\cdots
g_{n-1}(x_n)f_n(x_n)g_n(x_{n+1})
\]
for each $f_1,g_1,\cdots,f_n,g_n\in
M$, where $f_i(x_i)$ denotes the function $f_i$ on $\mathcal{M}$ with variables $x_i$ and $g_i(x_{i+1})$ is similar. By the formula in Proposition \ref{formula}, we have
\begin{eqnarray*}
&&\langle(f_1\otimes_{t_1}g_1\tau^\frac{1}{2})\tau^{-\frac{1}{2}}\cdots\tau^{-\frac{1}{2}}(f_n\otimes_{t_n}g_n),(f_1'\otimes_{t_1}g_1'\tau^\frac{1}{2})\tau^{-\frac{1}{2}}\cdots\tau^{-\frac{1}{2}}(f_n'\otimes_{t_n}g_n')\rangle\\
&&=\int_{\mathcal{M}_\mathfrak{p}}\overline{f_1(x_1)g_1(x_2)f_2(x_2)\cdots
g_{n-1}(x_n)f_n(x_n)g_n(x_{n+1})}\\
&&\hspace{120pt}f_1'(x_1)g_1'(x_2)f_2'(x_2)\cdots
g_{n-1}'(x_n)f_n'(x_n)g_n'(x_{n+1})d\mu_{\mathfrak{p}},
\end{eqnarray*}
and hence $u_{\mathfrak{p},t}$ is an isometry. 

We shall check that $u_{\mathfrak{p},t}$ is surjective. For an arbitrary $\varepsilon>0$ and $f\in
L^2(\mathcal{M}_\mathfrak{p},\mu_{\mathfrak{p}})$, there exists $g\in
C(\mathcal{M}^{n+1})$ such that $\|f-g\|_{L^2(\mathcal{M}_\mathfrak{p},\mu_{\mathfrak{p}})}<\varepsilon$. Since the space
\[
{\rm
span}\{f_1(x_1)\cdots
f_{n+1}(x_{n+1})\in
C(\mathcal{M}^{n+1})\mid
f_i\in
C(\mathcal{M})\}
\]
is dense in $C(\mathcal{M}^{n+1})$ with respect to the uniform convergence topology, there exist $N\in\mathbb{N}$ and functions $f_{i,j}\in
C(\mathcal{M})$ for each $i=1,\cdots,n+1$ and $j=1,\cdots,N$ such that $\|g-\sum_{j=1}^Nf_{1,j}(x_1)\cdots
f_{n+1,j}(x_{n+1})\|_{\infty}<\varepsilon$. Now, equations
\[
f_{1,j}(x_1)\cdots
f_{n+1,j}(x_{n+1})=u_{\mathfrak{p},t}((f_{1,j}\otimes_{t_1}1_{M}\tau^\frac{1}{2})\tau^{-\frac{1}{2}}\cdots\tau^{-\frac{1}{2}}(f_{n-1,j}\otimes_{t_{n-1}}1_{M}\tau^\frac{1}{2})\tau^{-\frac{1}{2}}(f_{n,j}\otimes_{t_n}f_{n+1,j}\tau^{\frac{1}{2}}))
\]
imply that the image $u_{\mathfrak{p},t}(D_{\mathfrak{p},t})$ of
\[
D_{\mathfrak{p},t}={\rm
span}\{(f_1\otimes_{t_1}g_1\tau^\frac{1}{2})\tau^{-\frac{1}{2}}\cdots\tau^{-\frac{1}{2}}(f_n\otimes_{t_n}g_n)\mid
f_1,g_1,\cdots,f_n,g_n\in
M\}
\]
by $u_{\mathfrak{p},t}$ is dense in $L^2(\mathcal{M}_\mathfrak{p},\mu_{\mathfrak{p}})$, and hence $u_{\mathfrak{p},t}$ is unitary.
\end{proof}
\begin{rema}
There is a connection between heat kernels and Brownian motions on Riemannian manifolds. Let $T$ be a heat semigroup on a compact Riemannian manifold $\mathcal{M}$ and $\{p_t\}_{t>0}$ the heat kernel associated with $T$. For $x\in\mathcal{M}$, there exist a probability space $(\Omega_x,\mathbb{P}_x)$ and an $\mathcal{M}$-valued stochastic process $\{X_t\}_{t\geq0}$ such that
\[
\mathbb{P}_x(X_t\in
A)=\int_Ap_t(x,y)d\mu(y)=(T_t1_A)(x)
\]
for all Borel set $A\subset\mathcal{M}$, where $1_A$ is the characteristic function on $A$. Also, the family $\{\mu_\mathfrak{p}\mid\mathfrak{p}\in\bigcup_{t>0}\mathfrak{P}_t\}$ of probability measures describes joint distributions for $\{X_t\}_{t\geq0}$ as follows: for a partition $\mathfrak{p}=(t_1,\cdots,t_n)\in\mathfrak{P}_t$ and Borel sets $A_1,\cdots,A_n$, if we denote $\check{\mathfrak{p}}=(t_2,t_3,\cdots,t_n)$, then we have
\begin{eqnarray*}
&&\mathbb{P}_x(X_{t_1}\in
A_1,X_{t_1+t_2}\in
A_2,\cdots,X_{t_1+\cdots,t_{n-1}}\in
A_{n-1},X_{t}\in
A_n)\\
&&=\int_{\mathcal{M}_{\check{\mathfrak{p}}}}p_{t_1}(x,x_2)1_{A_1}(x_2)\cdots1_{A_{n-1}}(x_n)1_{A_n}(x_{n+1})d\mu_{\check{\mathfrak{p}}}.
\end{eqnarray*}
\end{rema}
Now, we shall reconstruct the product system associated with the heat semigroup $T$ and a dilation of $T$ under the identification in Proposition \ref{H^T(p,t)L^2}.

Let $\mathfrak{q}\succ\mathfrak{p}$ with $\mathfrak{p}=(t_1,\cdots,t_n)\in\mathfrak{P}_t$ and $\mathfrak{q}=\mathfrak{p}(t_1)\lor\cdots\lor\mathfrak{p}(t_n)$ with $\mathfrak{p}(t_i)=(t_{i,1},\cdots,t_{i,k(i)})\in\mathfrak{P}_{t_i}$. The isometry $a_{\mathfrak{q},\mathfrak{p}}:L^2(\mathcal{M}_\mathfrak{p},\mu_\mathfrak{p})\to
L^2(\mathcal{M}_\mathfrak{q},\mu_\mathfrak{q})$ is given by
\begin{eqnarray*}
a_{\mathfrak{q},\mathfrak{p}}(f_\mathfrak{p})(x_{1,1},\cdots,x_{1,k(1)},x_{2,1},\cdots,x_{2,k(2)},\cdots,x_{n,1},\cdots,x_{n,k(n)},y)=f_\mathfrak{p}(x_{1,1},x_{2,1},\cdots,x_{n,1},y)
\end{eqnarray*}
for each $f_\mathfrak{p}\in
L^2(\mathcal{M}_\mathfrak{p},\mu_\mathfrak{p})$ and $x_{i,j},y\in\mathcal{M}$. By $\mathcal{H}_t^T$, we denote the inductive limit of the inductive system $(\{L^2(\mathcal{M}_\mathfrak{p},\mu_\mathfrak{p})\}_{\mathfrak{p}\in\mathfrak{P}_t},\{a_{\mathfrak{q},\mathfrak{p}}\}_{\mathfrak{p}\succ\mathfrak{q}})$. Let $\kappa_{\mathfrak{p},t}:L^2(\mathcal{M}_\mathfrak{p},\mu_\mathfrak{p})\to\mathcal{H}_t^T$ be the canonical embedding. 

The family $\{U_{s,t}:\mathcal{H}_s^T\otimes^M\mathcal{H}_t^T\to\mathcal{H}_{s+t}^T\}_{s,t\geq0}$ of $M$-bilinear unitaries giving the structure of the product system $H^T=\{\mathcal{H}_t^T\}_{t\geq0}$ associated with the heat semigroup $T$ satisfies the follows: for $s,t>0,\ \mathfrak{p}\in\mathfrak{P}_s,\ \mathfrak{q}\in\mathfrak{P}_t$ with $\#\mathfrak{p}=m,\ \#\mathfrak{q}=n$ and $f_\mathfrak{p}\in
L^\infty(\mathcal{M}_\mathfrak{p},\mu_\mathfrak{p}),\ g_\mathfrak{q}\in
L^2(\mathcal{M}_\mathfrak{q},\mu_\mathfrak{q}),\ h\in
L^2(\mathcal{M})=\mathcal{H}_0^T,\ h'\in
M=L^\infty(\mathcal{M})$, we have
\begin{eqnarray*}
&&U_{s,t}((\kappa_{\mathfrak{p},s}f_\mathfrak{p})\tau^{-\frac{1}{2}}(\kappa_{\mathfrak{q},t}g_{\mathfrak{q}}))=\kappa_{\mathfrak{p}\lor\mathfrak{q},s+t}(f_\mathfrak{p}\Box
g_\mathfrak{q}),\\
&&U_{s,0}((\kappa_{\mathfrak{p},s}f_\mathfrak{p})\tau^{-\frac{1}{2}}h)=\kappa_{\mathfrak{p},s}(f_\mathfrak{p}\Box
h),\ U_{0,t}(h'\tau^{-\frac{1}{2}}(\kappa_{\mathfrak{q},t}g_{\mathfrak{q}}))=\kappa_{\mathfrak{q},t}(h'\Box
g_\mathfrak{q}).
\end{eqnarray*}
Also, the unit $\Xi^T=\{\xi^T(t)\}_{t\geq0}$ associated with $T$ is given by $\xi^T(t)=\kappa_{(t),t}1_{\mathcal{M}^2}$ for each $t>0$ and $\xi^T(0)=1_\mathcal{M}$.

For $0<s\leq
t,\ \mathfrak{p}=(s_1,\cdots,s_m)\in\mathfrak{P}_s$ and $f_\mathfrak{p}\in
L^2(\mathcal{M}_\mathfrak{p},\mu_\mathfrak{p})$, the image of $\kappa_{\mathfrak{p},s}f_\mathfrak{p}$ by the isometry $b_{t,s}:\mathcal{H}_s^T\to\mathcal{H}_t^T$ is given by $b_{t,s}(\kappa_{\mathfrak{p},s}f_\mathfrak{p})=\kappa_{(t-s)\lor\mathfrak{p},t}\tilde{f}_\mathfrak{p}$, where $\tilde{f}_\mathfrak{p}\in
L^2(\mathcal{M}_{(t-s)\lor\mathfrak{p}},\mu_{(t-s)\lor\mathfrak{p}})$ in the right side is a function defined by $\tilde{f}_\mathfrak{p}(x,x_1,\cdots,x_m,y)=f_\mathfrak{p}(x_1,\cdots,x_m,y)$ for each $x,x_i,y\in\mathcal{M}$. If we also define $\tilde{f}\in
L^2(\mathcal{M}_{(t)},\mu_{(t)})$ for $f\in
L^2(\mathcal{M})$ by $\tilde{f}(x,y)=f(y)$ for each $x,y\in\mathcal{M}$, then $b_{t,0}f=\kappa_{(t),t}\tilde{f}$. We denote the inductive limit of $(\{\mathcal{H}_t^T\}_{t\geq0},\{b_{t,s}\}_{s\leq
t})$ by $\mathfrak{H}^T$ and let $\kappa_t:\mathcal{H}_t^T\to\mathfrak{H}^T$ be the canonical embedding. We can describe the isometry $b_{t,0}^*$ by heat kernels as follows:
\begin{prop}\label{formula(t)}
For $t>0,\ \mathfrak{p}=(t_1,\cdots,t_n)\in\mathfrak{P}_t$ and $f_\mathfrak{p}\in
L^2(\mathcal{M}_\mathfrak{p},\mu_\mathfrak{p})$, we have a formula
\[
(b_{t,0}^*\kappa_{\mathfrak{p},t}f_\mathfrak{p})(y)=\int_{\mathcal{M}_{\mathfrak{p}'}}f_\mathfrak{p}(x_1,\cdots,x_n,y)p_{t_n}(x_n,y)d\mu_{\mathfrak{p}'}(x_1,\cdots,x_n)
\]
for each $y\in\mathcal{M}$, where $\mathfrak{p}'=(t_1,\cdots,t_{n-1})$.
\end{prop}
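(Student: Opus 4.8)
The plan is to compute $b_{t,0}^*\kappa_{\mathfrak{p},t}f_\mathfrak{p}$ through its defining adjoint relation, testing against an arbitrary $h\in L^2(\mathcal{M})$. Writing $\mathfrak{p}=(t_1,\cdots,t_n)$ and identifying the last coordinate of $\mathcal{M}_\mathfrak{p}=\mathcal{M}^{n+1}$ with $y$, I want to show that the function $F$ defined by the claimed formula satisfies $\langle b_{t,0}^*\kappa_{\mathfrak{p},t}f_\mathfrak{p},h\rangle_{L^2(\mathcal{M})}=\langle F,h\rangle_{L^2(\mathcal{M})}$ for every $h$, which forces $b_{t,0}^*\kappa_{\mathfrak{p},t}f_\mathfrak{p}=F$ by uniqueness of the adjoint.

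First I would unfold the adjoint as $\langle b_{t,0}^*\kappa_{\mathfrak{p},t}f_\mathfrak{p},h\rangle_{L^2(\mathcal{M})}=\langle\kappa_{\mathfrak{p},t}f_\mathfrak{p},b_{t,0}h\rangle_{\mathcal{H}_t^T}$. Using the description $b_{t,0}h=\kappa_{(t),t}\tilde h$ recalled just before the statement, together with the compatibility $\kappa_{(t),t}=\kappa_{\mathfrak{p},t}a_{\mathfrak{p},(t)}$ of the inductive system (valid since $\mathfrak{p}\succ(t)$), this collapses the pairing onto a single fiber:
\[
\langle\kappa_{\mathfrak{p},t}f_\mathfrak{p},b_{t,0}h\rangle_{\mathcal{H}_t^T}=\langle\kappa_{\mathfrak{p},t}f_\mathfrak{p},\kappa_{\mathfrak{p},t}a_{\mathfrak{p},(t)}\tilde h\rangle_{\mathcal{H}_t^T}=\langle f_\mathfrak{p},a_{\mathfrak{p},(t)}\tilde h\rangle_{L^2(\mathcal{M}_\mathfrak{p},\mu_\mathfrak{p})}.
\]
Next I would evaluate $a_{\mathfrak{p},(t)}\tilde h$ from the explicit formula for the refinement isometries: since $(t)$ consists of a single block refined into $(t_1,\cdots,t_n)$, only the first refined coordinate is retained, giving $(a_{\mathfrak{p},(t)}\tilde h)(x_1,\cdots,x_n,y)=\tilde h(x_1,y)=h(y)$.

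The remaining step is a disintegration of $\mu_\mathfrak{p}$ over its last coordinate. By Definition \ref{M_p,mu_p} the measure factors as $d\mu_\mathfrak{p}(x_1,\cdots,x_n,y)=d\mu_{\mathfrak{p}'}(x_1,\cdots,x_n)\,p_{t_n}(x_n,y)\,d\mu(y)$ with $\mathfrak{p}'=(t_1,\cdots,t_{n-1})$. Substituting $a_{\mathfrak{p},(t)}\tilde h=h(y)$ and applying Fubini, I would rewrite
\[
\langle f_\mathfrak{p},a_{\mathfrak{p},(t)}\tilde h\rangle_{L^2(\mathcal{M}_\mathfrak{p},\mu_\mathfrak{p})}=\int_\mathcal{M}\overline{\left(\int_{\mathcal{M}_{\mathfrak{p}'}}f_\mathfrak{p}(x_1,\cdots,x_n,y)\,p_{t_n}(x_n,y)\,d\mu_{\mathfrak{p}'}\right)}h(y)\,d\mu(y),
\]
where the conjugation passes through $p_{t_n}$ because the heat kernel is real-valued and nonnegative. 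The inner integral is exactly $F(y)$, so the displayed quantity is $\langle F,h\rangle_{L^2(\mathcal{M})}$; as $h$ was arbitrary, the formula follows.

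I expect the only delicate point to be integrability bookkeeping rather than a genuine obstacle. Since $\mu_\mathfrak{p}$ is a probability measure we have $f_\mathfrak{p}\in L^2(\mu_\mathfrak{p})\subseteq L^1(\mu_\mathfrak{p})$, so Fubini applies and the inner integral defining $F(y)$ converges for $\mu$-almost every $y$; that $F$ actually lies in $L^2(\mathcal{M})$ is automatic from the boundedness of $b_{t,0}^*$, so no separate estimate is needed. The genuinely substantive moves are reading off $a_{\mathfrak{p},(t)}\tilde h$ correctly from the refinement formula and recognizing the factorization of $\mu_\mathfrak{p}$ that isolates the kernel $p_{t_n}(x_n,y)$ as the surviving transition against the final coordinate.
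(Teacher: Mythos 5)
Your proposal is correct and follows essentially the same route as the paper's own proof: unfold the adjoint against a test function $h\in L^2(\mathcal{M})$, use $b_{t,0}h=\kappa_{(t),t}\tilde h$ together with $\kappa_{(t),t}=\kappa_{\mathfrak{p},t}a_{\mathfrak{p},(t)}$ and the isometry of $\kappa_{\mathfrak{p},t}$, evaluate $a_{\mathfrak{p},(t)}\tilde h=h(y)$, and factor $d\mu_{\mathfrak{p}}=p_{t_n}(x_n,y)\,d\mu_{\mathfrak{p}'}\,d\mu(y)$ via Fubini. The only cosmetic difference is that the paper establishes $F\in L^2(\mathcal{M})$ up front by Jensen's inequality, whereas you obtain it a posteriori from the boundedness of $b_{t,0}^*$ after securing $L^1$-integrability; both are adequate.
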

\begin{proof}
Note that the function defined by the right hand belongs to $L^2(\mathcal{M})$ by Jensen's inequality with respect to the convex function $h$ defined by $h(z)=z^2$ for each $z\in\mathbb{R}$.

For each $g\in
L^2(\mathcal{M})$, we can compute as
\begin{eqnarray*}
&&\hspace{-15pt}\langle
b_{t,0}^*\kappa_{\mathfrak{p},t}f,g\rangle=\langle\kappa_{\mathfrak{p},t}f,\kappa_{(t),t}\tilde{g}\rangle=\langle
f,a_{\mathfrak{p},(t)}\tilde{g}\rangle=\int_{\mathcal{M}_\mathfrak{p}}\overline{f_\mathfrak{p}(x_1,\cdots,x_n,y)}g(y)d\mu_\mathfrak{p}(x_1,\cdots,x_n,y)\\
&&\hspace{-15pt}=\int_{\mathcal{M}}\int_{\mathcal{M}^n}\overline{f_\mathfrak{p}(x_1,\cdots,x_n,y)}p_{t_1}(x_1,x_2)\cdots
p_{t_{n-1}}(x_{n-1},x_n)p_{t_n}(x_n,y)d\mu^n(x_1,\cdots,x_n)g(y)d\mu(y)\\
&&\hspace{-15pt}=\int_{\mathcal{M}}\overline{\int_{\mathcal{M}_{\mathfrak{p}'}}f_\mathfrak{p}(x_1,\cdots,x_n,y)p_{t_n}(x_n,y)d\mu_{\mathfrak{p}'}(x_1,\cdots,x_n)}g(y)d\mu(y).
\end{eqnarray*}
Thus, we have shown the desired equation.
\end{proof}
The right action of $M$ on the right W$^*$-$M$-module $\mathfrak{H}^T$ is given by $(\kappa_t\kappa_{\mathfrak{p},t}f_\mathfrak{p})g=\kappa_t\kappa_{\mathfrak{p},t}(f_\mathfrak{p}\Box
g)$ for each $t\geq0,\ \mathfrak{p}\in\mathfrak{P}_t,\ f\in
L^2(\mathcal{M}_\mathfrak{p},\mu_\mathfrak{p})$ and $g\in
M=L^\infty(\mathcal{M})$. Clearly, for $t>0$, the identification $\mathfrak{H}^T\otimes^M\mathcal{H}_t^T\cong\mathfrak{H}^T$ as right W$^*$-modules is obtained by the right $M$-linear unitary
\[
U_t:\mathfrak{H}^T\otimes^M\mathcal{H}_t^T\ni\kappa_s\kappa_{\mathfrak{p},s}(f_p)\tau^{-\frac{1}{2}}\kappa_{\mathfrak{q},t}(g_\mathfrak{q})\mapsto\kappa_{s+t}\kappa_{\mathfrak{p}\lor\mathfrak{q},s+t}(f_\mathfrak{p}\Box
g_\mathfrak{q})\in\mathfrak{H}^T,
\]
where $f_\mathfrak{p}\in
L^\infty(\mathcal{M}_\mathfrak{p},\mu_\mathfrak{p})$ and $g_\mathfrak{q}\in
L^2(\mathcal{M}_\mathfrak{q},\mu_\mathfrak{q})$. Also, the unitary $U_0:\mathfrak{H}^T\otimes^ML^2(\mathcal{M})\to\mathfrak{H}^T$ is the canonical isomorphism.

By the embedding (\ref{faithfulrepM}), we regard as $M\subset{\rm
End}(\mathfrak{H}_M^T)$. Then, the following direct computations imply that the triple of ${\rm
End}(\mathfrak{H}_M^T)$, the E$_0$-semigroup $\theta$ on ${\rm
End}(\mathfrak{H}_M^T)$ defined by (\ref{maxdilation}) and the projection $p=\kappa_0\kappa_0^*$ becomes a dilation of $T$. For $t>0,\ x\in\mathcal{M}$ and $f\in
M=L^\infty(\mathcal{M}),\ g\in
L^2(\mathcal{M})$, we have
\begin{eqnarray*}
&&(\kappa_0^*\theta_t(f)\kappa_0g)(x)=(\kappa_0^*U_t(f\otimes^M{\rm
id}_{\mathcal{H}_t^T})((\kappa_01_M)\tau^{-\frac{1}{2}}\kappa_{(t),t}\tilde{g}))(x)\\
&&=(\kappa_0^*U_t((\kappa_0f)\tau^{-\frac{1}{2}}\kappa_{(t),t}\tilde{g}))(x)=(\kappa_0^*\kappa_t\kappa_{(t),t}(f\Box
\tilde{g}))(x)=\int_\mathcal{M}(f\tilde{g})(y,x)p_t(y,x)d\mu(y)\\
&&=\int_\mathcal{M}f(y)g(x)p_t(y,x)d\mu(y)=(T_tf)g(x).
\end{eqnarray*}
Here, the forth equality is implied from the formula in Proposition \ref{formula(t)} in the case when $\mathfrak{p}=(t)$.

\section{Classification of E$_0$-semigroups}\label{Classification of E$_0$-semigroups}
In this section, we shall classify E$_0$-semigroups on a von Neumann algebra up to cocycle equivalence by the product systems of W$^*$-bimodules associated with their E$_0$-semigroups as CP$_0$-semigroups.

Let $\theta$ be an E$_0$-semigroup on $M$. For each $t\geq0$, let $\tilde{\mathcal{H}}^\theta_t=L^2(M)$ as sets with a left and a right actions of $M$ defined by $x\xi
y=\theta_t(x)\xi
y$ for each $x,y\in
M$ and $\xi\in\tilde{\mathcal{H}}^\theta_t$, and $\tilde{\xi}^\theta(t)=\phi^\frac{1}{2}$. Then, the family $\tilde{H}^\theta=\{\tilde{\mathcal{H}}^\theta_t\}_{t\geq0}$ is a product system of W$^*$-bimodules and $\tilde{\Xi}^{\theta}=\{\tilde{\xi}^\theta(t)\}_{t\geq0}$ is a continuous generating unital unit. 
\begin{prop}\label{canonicalsystemtheta}
Let $(H^\theta,\Xi^\theta)$ be the pair associated with $\theta$ as CP$_0$-semigroups. There is an isomorphism $u^\theta=\{u^\theta_t\}_{t\geq0}$ from $H^{\theta}$ onto $\tilde{H}^{\theta}$ preserving the units $\Xi^\theta$ and $\tilde{\Xi}^\theta$.
\end{prop}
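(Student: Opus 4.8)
The plan is to read the proposition off from Theorem~\ref{correspondenceCP}, once I identify the algebraic CP$_0$-semigroup attached to the pair $(\tilde{H}^\theta,\tilde{\Xi}^\theta)$. First I would pin down the product system structure of $\tilde{H}^\theta$ by setting, for $s,t>0$,
\[
\tilde{V}_{s,t}((a\phi^{\frac{1}{2}})\phi^{-\frac{1}{2}}\eta)=\theta_t(a)\eta\in L^2(M)=\tilde{\mathcal{H}}^\theta_{s+t}
\]
for $a\in M$ and $\eta\in\tilde{\mathcal{H}}^\theta_t$. Because the right $M$-action on $\tilde{\mathcal{H}}^\theta_s=L^2(M)$ is the standard one, $\pi_\phi(a\phi^{\frac{1}{2}})$ is left multiplication by $a$, so $\pi_\phi(a\phi^{\frac{1}{2}})^*\pi_\phi(b\phi^{\frac{1}{2}})$ corresponds to $a^*b\in M$, which acts on $\tilde{\mathcal{H}}^\theta_t$ through the left action twisted by $\theta_t$. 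Hence the inner product formula for the relative tensor product gives
\[
\langle(a\phi^{\frac{1}{2}})\phi^{-\frac{1}{2}}\eta_1,(b\phi^{\frac{1}{2}})\phi^{-\frac{1}{2}}\eta_2\rangle=\langle\eta_1,\theta_t(a^*b)\eta_2\rangle=\langle\theta_t(a)\eta_1,\theta_t(b)\eta_2\rangle,
\]
so $\tilde{V}_{s,t}$ is a well-defined isometry. It is onto since $\theta_t$ is unital (take $a=1_M$), and it is $M$-bilinear: the multiplicativity and semigroup law $\theta_t(\theta_s(x)a)=\theta_{s+t}(x)\theta_t(a)$ handle the left action, while the right action is transported verbatim. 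Associativity \emph{(\ref{productsystemunitary})} for $\{\tilde{V}_{s,t}\}$ is again nothing but $\theta_r\theta_s=\theta_{r+s}$, so $\tilde{H}^\theta$ is a $\phi$-spatial product system.

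Next I would verify that $\tilde{\Xi}^\theta$ is a generating unital unit. Since $\pi_\phi(\tilde{\xi}^\theta(t))=\pi_\phi(\phi^{\frac{1}{2}})=\mathrm{id}_{L^2(M)}$, we get $\pi_\phi(\tilde{\xi}^\theta(t))^*\pi_\phi(\tilde{\xi}^\theta(t))=1_M$, so the unit is unital, and $\tilde{V}_{s,t}(\tilde{\xi}^\theta(s)\phi^{-\frac{1}{2}}\tilde{\xi}^\theta(t))=\theta_t(1_M)\phi^{\frac{1}{2}}=\phi^{\frac{1}{2}}=\tilde{\xi}^\theta(s+t)$ confirms the unit relation. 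Generation already holds for the trivial partition $\mathfrak{p}=(t)$, because $\{x\,\tilde{\xi}^\theta(t)\,y\mid x,y\in M\}\supset\{\phi^{\frac{1}{2}}y\mid y\in M\}$ is dense in $L^2(M)=\tilde{\mathcal{H}}^\theta_t$.

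The crux is the computation $T^{\tilde{\Xi}^\theta}=\theta$. As the left action on $\tilde{\mathcal{H}}^\theta_t$ is twisted, $x\,\tilde{\xi}^\theta(t)=\theta_t(x)\phi^{\frac{1}{2}}$, whence $\pi_\phi(x\,\tilde{\xi}^\theta(t))$ is left multiplication by $\theta_t(x)$; combining with $\pi_\phi(\tilde{\xi}^\theta(t))=\mathrm{id}$, the defining formula \emph{(\ref{cpfromunit})} yields
\[
T^{\tilde{\Xi}^\theta}_t(x)=\pi_\phi(\tilde{\xi}^\theta(t))^*\pi_\phi(x\,\tilde{\xi}^\theta(t))=\theta_t(x)
\]
for all $t\geq0$ and $x\in M$. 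Since $\theta$ is a (continuous, in particular algebraic) CP$_0$-semigroup, Theorem~\ref{correspondenceCP} applies to the generating unital unit $\tilde{\Xi}^\theta$ of $\tilde{H}^\theta$ and produces a unit-preserving isomorphism from $H^{T^{\tilde{\Xi}^\theta}}=H^\theta$ onto $\tilde{H}^\theta$ carrying $\Xi^{T^{\tilde{\Xi}^\theta}}=\Xi^\theta$ to $\tilde{\Xi}^\theta$. This is precisely the desired $u^\theta$.

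The only real work is the bookkeeping that $\tilde{H}^\theta$ is a genuine product system, but every step there collapses to $\theta_s\theta_t=\theta_{s+t}$ and $\theta_t(ab)=\theta_t(a)\theta_t(b)$, so no serious obstacle arises. If one prefers to exhibit the isomorphism directly rather than invoke Theorem~\ref{correspondenceCP}, the same homomorphism property shows that
\[
u^\theta_t(\kappa_{\mathfrak{p},t}((x_1\otimes_{t_1}\phi^{\frac{1}{2}})\phi^{-\frac{1}{2}}\cdots\phi^{-\frac{1}{2}}(x_n\otimes_{t_n}\phi^{\frac{1}{2}}y)))=\theta_t(x_1)\theta_{t-t_1}(x_2)\cdots\theta_{t_n}(x_n)\phi^{\frac{1}{2}}y
\]
is independent of the refinement of $\mathfrak{p}=(t_1,\dots,t_n)\in\mathfrak{P}_t$ and defines the bimodule unitary directly.
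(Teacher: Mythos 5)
Your proposal is correct, but it reaches the isomorphism by a different route than the paper. The paper's proof is a one-step direct construction: it defines $u^\theta_t$ on the spanning vectors $\kappa_{\mathfrak{p},t}((x_1\otimes_{t_1}y_1\phi^{\frac{1}{2}})\phi^{-\frac{1}{2}}\cdots\phi^{-\frac{1}{2}}(x_n\otimes_{t_n}y_n\phi^{\frac{1}{2}}))$ by the nested expression $\theta_{t_n}(\theta_{t_{n-1}}(\cdots(\theta_{t_2}(\theta_{t_1}(x_1)y_1x_2)y_2x_3)\cdots)y_{n-1}x_n)y_n$ (read as a vector of $L^2(M)$, i.e.\ multiplied by $\phi^{\frac{1}{2}}$) and simply asserts that this family is the desired isomorphism; by the homomorphism and semigroup properties this nested expression equals the product $\theta_t(x_1)\theta_{t-t_1}(x_2)\cdots\theta_{t_n}(x_n)y_n$, so the unitary you produce is literally the same map restricted to the generating vectors of the unit. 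What you do differently is to route through Theorem \ref{correspondenceCP}: you equip $\tilde{H}^\theta$ with explicit structure maps $\tilde{V}_{s,t}$, check that $\tilde{\Xi}^\theta$ is a generating unital unit, compute $T^{\tilde{\Xi}^\theta}=\theta$, and then quote the theorem. This buys two things. First, it supplies proofs of exactly the assertions the paper makes without argument in the paragraph preceding the proposition (that $\tilde{H}^\theta$ is a product system and $\tilde{\Xi}^\theta$ a generating unital unit) and of the identity $\theta=T^{\tilde{\Xi}^\theta}$ stated after it. Second, well-definedness, isometry, surjectivity and the intertwining relation (\ref{isomorphism}) for $u^\theta$ are inherited from the theorem rather than having to be re-verified; the paper's version, by contrast, is self-contained and defines the map on the slightly larger spanning set with arbitrary interleaved $y_i$'s. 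One point worth making explicit in your argument: since every left $\phi$-bounded vector $\xi$ of the standard space satisfies $\xi=\pi_\phi(\xi)\phi^{\frac{1}{2}}$ with $\pi_\phi(\xi)\in M$, one has $\mathcal{D}(\tilde{\mathcal{H}}^\theta_s;\phi)=M\phi^{\frac{1}{2}}$ exactly, so your formula for $\tilde{V}_{s,t}$ is defined on all of $\mathcal{D}(\tilde{\mathcal{H}}^\theta_s;\phi)\otimes_{\rm alg}\tilde{\mathcal{H}}^\theta_t$ and no further density argument is needed for well-definedness.
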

\begin{proof}
For $t>0$, we define $u_t^\theta:\mathcal{H}_t^\theta\to\tilde{\mathcal{H}}_t^\theta$ by
\begin{eqnarray*}
&&u_t^\theta(\kappa_{\mathfrak{p},t}((x_1\otimes_{t_1}y_1\phi^\frac{1}{2})\phi^{-\frac{1}{2}}\cdots\phi^{-\frac{1}{2}}(x_n\otimes_{t_n}y_n\phi^\frac{1}{2})))\\
&&=\theta_{t_n}(\theta_{t_{n-1}}(\cdots(\theta_{t_2}(\theta_{t_1}(x_1)y_1x_2)y_2x_3)\cdots)y_{n-1}x_n)y_n
\end{eqnarray*}
for each $\mathfrak{p}=(t_1,\cdots,t_n)\in\mathfrak{P}_t$ and $x_1,\cdots,x_n,y_1,\cdots,y_n\in
M$, where $\kappa_{\mathfrak{p},t}:\mathcal{H}^\theta(\mathfrak{p},t)\to\mathcal{H}_t^\theta$ is the canonical embedding. Put $u_0^\theta={\rm
id}$. The family $u^\theta=\{u_t^\theta\}_{t\geq0}$ is the desired isomorphism. 
\end{proof}
\begin{exam}
For an E$_0$-semigroup $\theta$ on $\mathcal{B}(\mathcal{H})$, the product system $H^\theta$ associated with $\theta$ is isomorphic to the product system $\tilde{H}^\theta=\{\tilde{\mathcal{H}}_t^\theta\}_{t\geq0}$ of W$^*$-bimodules and ${\mathcal{H}}_t^\theta=\mathcal{H}\otimes\mathcal{H}^*$ with left and right actions of $\mathcal{B}(\mathcal{H})$ defined by $x(\xi\otimes\eta^*)y=(\alpha_t(x)\xi)\otimes(y^*\eta)^*$
for each $t\geq0,\ \xi,\eta\in\mathcal{H}$ and $x,y\in\mathcal{B}(\mathcal{H})$.
\end{exam}
For an E$_0$-semigroup $\theta$ on $M$, a family $\{f_t^\theta\}_{t\geq0}$ of the trivial right $M$-linear unitaries $f_t^\theta:\tilde{\mathcal{H}}_t^\theta\ni\xi\mapsto\xi\in
L^2(M)$ induces the right $M$-linear unitary $f^\theta:\tilde{\mathfrak{H}}^\theta\to
L^2(M)$, where $\tilde{\mathfrak{H}}^\theta$ is the inductive limit of $(\tilde{H}^{\theta},\tilde{\Xi}^{\theta})$. Note that the all canonical embeddings $\kappa^\theta_t:\tilde{\mathcal{H}}^\theta_t\to\tilde{\mathfrak{H}}^\theta$ are unitaries and equal to each other. We have $\theta=T^{\tilde{\Xi}^\theta}$, and the E$_0$-semigroup $\{(f^\theta)^*\theta_t(f^\theta\cdot
(f^{\theta})^*)f^\theta\}_{t\geq0}$ coincides with the dilation $\tilde{\theta}$ of the pair $(\tilde{H}^{\theta},\tilde{\Xi}^{\theta})$. We have the following classification of E$_0$-semigroups.
\begin{theo}\label{classifye0}
Two E$_0$-semigroups $\alpha$ and $\beta$ on a von Neumann algebra $M$ are cocycle equivalent if and only if $H^\alpha\cong
H^\beta$.
\end{theo}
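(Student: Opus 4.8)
The plan is to transport the whole question to the concrete product systems $\tilde H^\alpha,\tilde H^\beta$ and to recognise an isomorphism between them as a unitary cocycle. By the unit-preserving isomorphisms of Proposition~\ref{canonicalsystemtheta} one has $H^\theta\cong\tilde H^\theta$ for each $\theta$, so $H^\alpha\cong H^\beta$ holds if and only if $\tilde H^\alpha\cong\tilde H^\beta$, and I would work throughout with $\tilde H^\theta=\{\tilde{\mathcal H}_t^\theta\}$, where $\tilde{\mathcal H}_t^\theta=L^2(M)$ carries the left action $x\xi=\theta_t(x)\xi$ and the standard right action, with unit $\tilde\xi^\theta(t)=\phi^{\frac12}$. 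The first step is to record the product structure: transporting the unitaries $U_{s,t}^\theta$ of Theorem~\ref{relative productH} through $u^\theta$ (or a direct calculation) gives, for the $\phi$-bounded vectors $a\phi^{\frac12},b\phi^{\frac12}\in L^2(M)$, the formula $\tilde V_{s,t}^\theta((a\phi^{\frac12})\phi^{-\frac12}(b\phi^{\frac12}))=\theta_t(a)b\,\phi^{\frac12}$, which indeed fixes $\tilde\xi^\theta(t)=\phi^{\frac12}$.

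Next I would classify the isomorphisms $\tilde H^\alpha\to\tilde H^\beta$. As each $\tilde{\mathcal H}_t^\alpha=\tilde{\mathcal H}_t^\beta=L^2(M)$ has the standard right action and ${\rm End}(L^2(M)_M)=M$ (left multiplication), a right $M$-linear unitary $u_t$ is left multiplication by a unitary $v_t\in M$; left $M$-linearity of $u_t$ is then equivalent to (i) $v_t\alpha_t(x)=\beta_t(x)v_t$ for all $x\in M$. Substituting the product formula into the compatibility relation~(\ref{isomorphism}) and simplifying with (i) and $\alpha_t(1)=1$ collapses it to (ii) $v_{s+t}=\beta_t(v_s)v_t$. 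Hence isomorphisms correspond bijectively to families $\{v_t\}$ of unitaries in $M$ satisfying (i) and (ii). Writing $w_t=v_t^*$ and taking adjoints, (i) becomes $\beta_t(x)=w_t^*\alpha_t(x)w_t$ and (ii) becomes the right cocycle identity $w_{s+t}=\alpha_t(w_s)w_t$ for $\alpha$; both are one-line manipulations using (i).

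The essential difficulty is continuity, because Definition~\ref{isomorphismsystem} is algebraic whereas cocycle equivalence requires a strongly continuous cocycle. In the direction ``cocycle equivalent $\Rightarrow$ isomorphic'' there is nothing to check: a given strongly continuous unitary cocycle $w$ produces $\{v_t=w_t^*\}$ satisfying (i) and (ii), hence an isomorphism. For the converse I would feed the algebraic data into the continuity machinery for units. The isomorphism carries the continuous generating unital unit $\tilde\Xi^\alpha$ to the unit $\Lambda=\{v_t\phi^{\frac12}\}=\{w_t^*\phi^{\frac12}\}$ of $\tilde H^\beta$, which is again generating and unital, and a short computation from~(\ref{cpfromunit}) gives $T^\Lambda_t(x)=w_t\beta_t(x)w_t^*=\alpha_t(x)$. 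Thus $T^\Lambda=\alpha$ is an honest CP$_0$-semigroup, so $\Lambda$ is weakly continuous by Theorem~\ref{condition for continuity} and therefore continuous by Corollary~\ref{strongcontinuity}.

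It remains to convert continuity of $\Lambda$ into strong continuity of $w$. Under the identification $\tilde{\mathfrak H}^\beta\cong L^2(M)$ (all $\kappa_t^\beta$ being equal unitaries), condition~(\ref{continuity2}) evaluated at $\xi=\phi^{\frac12}$ reads $v_t\phi^{\frac12}\to\phi^{\frac12}$ as $t\to+0$; since $v_t$ is unitary this also gives $v_t^*\phi^{\frac12}\to\phi^{\frac12}$, and because each $v_t$ commutes with the right action of $M$, for which $\phi^{\frac12}$ is cyclic, while $\|v_t\|\le1$, one obtains $v_t\to1$ strongly, i.e. $w_t\to1=w_0$ strongly as $t\to+0$. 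A standard argument, using the cocycle identity $w_{t_0+h}=\alpha_h(w_{t_0})w_h$ together with the point-strong continuity of $\alpha$ on $L^2(M)$, promotes this to strong continuity of $t\mapsto w_t$ everywhere, so $w$ is a strongly continuous right unitary cocycle for $\alpha$ with $\beta_t(x)=w_t^*\alpha_t(x)w_t$, proving cocycle equivalence. The main obstacle is precisely this last upgrade from the purely algebraic isomorphism to a strongly continuous cocycle, and it is exactly what Theorem~\ref{condition for continuity} and Corollary~\ref{strongcontinuity} are designed to supply.
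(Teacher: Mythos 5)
Your proposal follows the same route as the paper: reduce to the concrete systems $\tilde H^\alpha$, $\tilde H^\beta$ via Proposition \ref{canonicalsystemtheta}, treat the easy direction algebraically, and in the converse direction transport the unit $\tilde\Xi^\alpha$ to a generating unital unit $\Lambda=\{v_t\phi^{\frac{1}{2}}\}$ of $\tilde H^\beta$ with $T^\Lambda=\alpha$, then invoke Theorem \ref{condition for continuity} and Corollary \ref{strongcontinuity}. Your explicit description of isomorphisms as families of unitaries $v_t\in M$ satisfying (i) $v_t\alpha_t(x)=\beta_t(x)v_t$ and (ii) $v_{s+t}=\beta_t(v_s)v_t$ is correct (it is what the paper uses implicitly), and everything up to the assertion that $\Lambda$ is continuous is sound.

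The gap is the step that converts continuity of $\Lambda$ into $v_t\phi^{\frac{1}{2}}\to\phi^{\frac{1}{2}}$. Continuity in the sense of (\ref{continuity2}) is defined with respect to the inductive limit of the pair $(\tilde H^\beta,\Lambda)$, i.e.\ the one built from the unit $\Lambda$ itself, not with respect to the inductive limit of $(\tilde H^\beta,\tilde\Xi^\beta)$, which is the only one in which all embeddings $\kappa_t^\beta$ coincide. For $(\tilde H^\beta,\Lambda)$ the connecting isometries (\ref{inductivebeta}) are $b_{t,s}(\xi)=\beta_s(v_{t-s})\xi$, hence $\kappa_0=\kappa_t\circ v_t$, and evaluating (\ref{continuity2}) at $\xi=\kappa_0\phi^{\frac{1}{2}}$ yields $U_t(\xi\phi^{-\frac{1}{2}}\lambda(t))=\kappa_t(\beta_t(1_M)v_t\phi^{\frac{1}{2}})=\kappa_t(v_t\phi^{\frac{1}{2}})=\kappa_0\phi^{\frac{1}{2}}=\xi$ identically: the condition is a tautology and says nothing about $v_t$. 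In fact the desired conclusion is false for a general algebraic isomorphism: take $\beta=\alpha$ and $v_t=e^{i\chi(t)}1_M$ with $\chi:\mathbb{R}\to\mathbb{R}$ additive and nonlinear (hence nowhere continuous). Then (i) and (ii) hold, so $\{v_t\}$ is an isomorphism; $\Lambda=\{v_t\phi^{\frac{1}{2}}\}$ is generating, unital and satisfies $T^\Lambda=\alpha$, hence is weakly continuous and, by Corollary \ref{strongcontinuity}, continuous; yet $v_t\phi^{\frac{1}{2}}\not\to\phi^{\frac{1}{2}}$, and $w_t=v_t^*$ is not strongly continuous. So your construction, applied to an arbitrary isomorphism, can output a discontinuous cocycle, and no argument along the lines you sketch can exclude this; the strategy needs to be repaired (e.g.\ by modifying the cocycle), not merely the one estimate. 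To be fair, this is precisely the point the paper's own proof passes over with the unproved claim that ``by the continuity of $\Lambda$, we can check that $\kappa_t^\alpha\lambda(t)\to\kappa_s^\alpha\lambda(s)$,'' and the same example tests that claim; your more explicit derivation simply makes the conflation of the two inductive limits visible. A secondary issue: even granting $w_h\to1$ strongly as $h\to+0$, your closing ``standard argument'' gives right continuity of $t\mapsto w_t$, but the left-continuity estimate involves $\alpha_h(w_{t_0-h})-w_{t_0-h}$ with an $h$-dependent entry, which requires an additional uniformity argument; the paper instead obtains two-sided continuity from the symmetric inner-product computation (\ref{wtwsinner}) together with unitarity.
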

\begin{proof}
We will use the above notations for $\alpha$ and $\beta$ in this proof. If $w=\{w_t\}_{t\geq0}\subset
M$ is a unitary right cocycle for $\alpha$ and $\beta_t(\cdot)=w_t^*\alpha_t(\cdot)w_t$ for each $t\geq0$, then $u_t:\tilde{\mathcal{H}}_t^{\beta}\ni
x\phi^\frac{1}{2}\mapsto
w_tx\phi^\frac{1}{2}\in\tilde{\mathcal{H}}_t^{\alpha}$ gives an isomorphism $\tilde{H}^{\beta}\cong\tilde{H}^{\alpha}$. Thus, we have $H^{\beta}\cong\tilde{H}^{\beta}\cong\tilde{H}^{\alpha}\cong
H^{\alpha}$.

Conversely, suppose a family $\{u_t\}_{t\geq0}$ of $M$-bilinear unitaries gives an isomorphism from $H^{\beta}$ onto $H^{\alpha}$. Let $u^\alpha$ be the isomorphism from $H^\alpha$ onto $\tilde{H}^\alpha$ in Proposition \ref{canonicalsystemtheta}. Put $\lambda(t)=u_t^\alpha
u_t\xi^{\beta}(t)\in\tilde{\mathcal{H}}_t^\alpha$ for each $t\geq0$, and then $\Lambda=\{\lambda(t)\}_{t\geq0}$ is a unital unit of $\tilde{H}^\alpha$. We have
\begin{equation}\label{theta'}
\beta_t(x)=\pi_\phi(\xi^{\beta}(t))^*\pi_\phi(x\xi^{\beta}(t))=\pi_\phi(\lambda(t))^*\pi_\phi(x\lambda(t))
\end{equation}
for all $x\in
M$, that is, $\beta=T^\Lambda$. We can check that $\Lambda$ is weakly continuous and generating. Hence the unit $\Lambda$ is continuous by Corollary \ref{strongcontinuity}. We denote the right cocycle for $\tilde{\alpha}$ associated with $\Lambda$ by $w^0=\{w_t^0=\pi_\phi(\kappa_t^\alpha\lambda(t))\pi_\phi(\kappa_0^\alpha\phi^\frac{1}{2})^*\}_{t\geq0}$ as Theorem \ref{unitcocycle}.

By (\ref{wt}), each $w_t^0$ is isometry. Since the map $:\phi^\frac{1}{2}x\mapsto\lambda(t)x$ is isometry, we have $\overline{{\rm
span}}\{\lambda(t)x\mid
x\in
M\}=L^2(M)$. Thus, by (\ref{wt0}), each $w_t^0$ is surjective. Now, we shall show that $w^0$ is strongly continuous. For $s\geq0$, by the continuity of $\Lambda$, we can check that $\kappa^\alpha_t\lambda(t)\to\kappa^\alpha_s\lambda(s)$ when $t\to
s$. Let $\xi\in\tilde{\mathfrak{H}}^\alpha$ and $t\geq
s$, by (\ref{wt}), we have
\begin{eqnarray}\label{wtwsinner}
&&\langle
w_t^0\xi,w_s^0\xi\rangle=\langle
U_{t,0}^\alpha(\lambda(t)\phi^{-\frac{1}{2}}(\kappa_0^{\alpha})^*\xi),b_{t,s}^\alpha
U_{s,0}^\alpha(\lambda(s)\phi^{-\frac{1}{2}}(\kappa_0^{\alpha})^*\xi)\rangle\nonumber\\
&&=\langle
U_{t,0}^\alpha(\lambda(t)\phi^{-\frac{1}{2}}(\kappa_0^{\alpha})^*\xi),U_{t-s,s}^\alpha({\rm
id}_{t-s}\otimes
U_{s,0}^\alpha)(\xi^\alpha(t-s)\phi^{-\frac{1}{2}}\lambda(s)\phi^{-\frac{1}{2}}(\kappa_0^{\alpha})^*\xi)\rangle\nonumber\\
&&=\langle
U_{t,0}^\alpha(\lambda(t)\phi^{-\frac{1}{2}}(\kappa_0^{\alpha})^*\xi),U_{t,0}^\alpha(U_{t-s,s}^\alpha\otimes{\rm
id}_{0})(\xi^\alpha(t-s)\phi^{-\frac{1}{2}}\lambda(s)\phi^{-\frac{1}{2}}(\kappa_0^{\alpha})^*\xi)\rangle\nonumber\\
&&=\langle\lambda(t)\phi^{-\frac{1}{2}}(\kappa_0^{\alpha})^*\xi,b_{t,s}^\alpha\lambda(s)\phi^{-\frac{1}{2}}(\kappa_0^{\alpha})^*\xi\rangle=\langle(\kappa_0^{\alpha})^*\xi,\pi_\phi(\kappa_t^\alpha\lambda(t))^*\pi_\phi(\kappa_s^\alpha\lambda(s))(\kappa_0^{\alpha})^*\xi\rangle.
\end{eqnarray}
Since $\pi_\phi(\kappa_t^\alpha\lambda(t))^*\pi_\phi(\kappa_s^\alpha\lambda(s))\to1_M$ weakly when $t\to
s$ or $s\to
t$, (\ref{wtwsinner}) tends to $\langle\xi,\xi\rangle$ when $t\to
s+0$, and by the symmetry, $\langle
w_t^0\xi,w_s^0\xi\rangle$ also tends to $\langle\xi,\xi\rangle$ when $t\to
s-0$. We conclude that $w_t^0\xi\to
w_s^0\xi$ when $t\to
s$.

Put $w_t=f^\alpha
w_t^0(f^\alpha)^*\in
M$. Then, the family $w=\{w_t\}_{t\geq0}$ is a strongly continuous right cocycle for $\alpha$. For all $t\geq0$ and $x,y,z\in
M$, since $w_t\phi^\frac{1}{2}x=\lambda(t)x$ and $\beta$ is given as (\ref{theta'}), we have $\langle
w_t^*\alpha_t(x)w_t\phi^\frac{1}{2}y,\phi^\frac{1}{2}z\rangle=\langle\pi_\phi(\lambda(t))^*\pi_\phi(x\cdot\lambda(t))\phi^\frac{1}{2}y,\phi^\frac{1}{2}z\rangle=\langle\beta_t(x)\phi^\frac{1}{2}y,\phi^\frac{1}{2}z\rangle$, and hence $\beta_t(x)=w_t^*\alpha_t(x)w_t$.
\end{proof}
\begin{coro}
Two E$_0$-semigroups $\alpha$ and $\beta$ on von Neumann algebras $M$ and $N$, respectively, are cocycle conjugate if and only if there exists a $*$-isomorphism $\Phi:M\to
N$ such that $H^\alpha$ and $H^{\beta^\Phi}$ are isomorphic.
\end{coro}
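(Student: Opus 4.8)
The plan is to reduce the corollary directly to Theorem \ref{classifye0} by unwinding the definition of cocycle conjugacy, with no new analytic input. First I would recall that, by definition, $\alpha$ and $\beta$ are cocycle conjugate precisely when there exists a $*$-isomorphism $\Phi:M\to N$ such that the conjugated semigroup $\beta^\Phi=\{\Phi^{-1}\circ\beta_t\circ\Phi\}_{t\geq0}$ is a cocycle perturbation of $\alpha$. The key observation is that $\beta^\Phi$ is itself an E$_0$-semigroup \emph{on $M$}: since $\Phi$ is a normal $*$-isomorphism, each $\beta_t^\Phi=\Phi^{-1}\circ\beta_t\circ\Phi$ is a normal $*$-endomorphism of $M$, the semigroup law $\beta_s^\Phi\beta_t^\Phi=\beta_{s+t}^\Phi$ and the $\sigma$-weak continuity transport along $\Phi$ and $\Phi^{-1}$, and $\beta_0^\Phi=\mathrm{id}_M$. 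Moreover, by the definition of cocycle perturbation, the assertion ``$\beta^\Phi$ is a cocycle perturbation of $\alpha$'' is exactly the statement that $\alpha$ and $\beta^\Phi$ are cocycle equivalent as E$_0$-semigroups on the common von Neumann algebra $M$ equipped with the fixed faithful normal state $\phi$.

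Having made this identification, Theorem \ref{classifye0} applies verbatim to the pair $(\alpha,\beta^\Phi)$ on $M$ and gives that $\alpha$ and $\beta^\Phi$ are cocycle equivalent if and only if $H^\alpha\cong H^{\beta^\Phi}$. Chaining the two equivalences yields the claim. For the forward direction I would take the $*$-isomorphism $\Phi$ furnished by cocycle conjugacy, feed $(\alpha,\beta^\Phi)$ into the ``only if'' part of Theorem \ref{classifye0}, and thereby produce the product-system isomorphism $H^\alpha\cong H^{\beta^\Phi}$. For the reverse direction I would take the $\Phi$ producing $H^\alpha\cong H^{\beta^\Phi}$, invoke the ``if'' part of Theorem \ref{classifye0} to obtain a strongly continuous right unitary cocycle $w=\{w_t\}_{t\geq0}$ for $\alpha$ with $\beta_t^\Phi(x)=w_t^*\alpha_t(x)w_t$ for all $x\in M$ and $t\geq0$, and conclude that $\beta^\Phi$ is a cocycle perturbation of $\alpha$, hence that $\alpha$ and $\beta$ are cocycle conjugate by definition.

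I do not expect a genuine obstacle here: the entire mathematical content is carried by Theorem \ref{classifye0}, and the corollary is simply its ``up to a $*$-isomorphism of the underlying algebra'' reformulation. The only point deserving an explicit line is the verification, noted above, that $\beta^\Phi$ is a bona fide E$_0$-semigroup on $M$ so that the hypotheses of Theorem \ref{classifye0} are met; everything else is a direct translation between the definitions of cocycle conjugacy, cocycle perturbation, and cocycle equivalence.
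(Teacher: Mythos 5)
Your proposal is correct and is exactly the argument the paper leaves implicit: the corollary is stated without proof because, as you observe, cocycle conjugacy is by definition the existence of a $*$-isomorphism $\Phi$ making $\beta^\Phi$ a cocycle perturbation of $\alpha$ on $M$, so Theorem \ref{classifye0} applied to the pair $(\alpha,\beta^\Phi)$ gives the equivalence at once. Your added remark that $\beta^\Phi$ is a bona fide E$_0$-semigroup on $M$ (normality and $\sigma$-weak continuity transporting along $\Phi$) is the right point to make explicit and involves no gap.
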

\begin{exam}\label{trivialexample}
Let $u=\{u_t\}_{t\geq0}$ be a strongly continuous semigroup of unitaries $u_t$ in a von Neumann algebra $M$. We define $\theta_t(x)=u_t^*xu_t$ for each $t\geq0$ and $x\in
M$. The product system of W$^*$-bimodules associated with $\theta$ is isomorphic to the trivial product system $\{L^2(M)\}_{t\geq0}$.
\end{exam}
\begin{prop}\label{unitcocycle}.
Let $\theta$ be an E$_0$-semigroup on a von Neumann algebra $M$. For a unit $\Xi=\{\xi(t)\}_{t\geq0}$ of $H^\theta$ and $t\geq0$, there exists a unique $a_t\in
M$ such that $\xi(t)=a_t\phi^\frac{1}{2}$. The family $\{a_t\}_{t\geq0}$ is a right cocycle for $\theta$.
\end{prop}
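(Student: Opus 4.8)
The plan is to transport the whole problem onto the concrete product system $\tilde{H}^\theta=\{\tilde{\mathcal{H}}_t^\theta\}_{t\geq0}$, where each $\tilde{\mathcal{H}}_t^\theta=L^2(M)$ carries the $\theta_t$-twisted left action $x\xi=\theta_t(x)\xi$ and the standard right action. By Proposition \ref{canonicalsystemtheta} the isomorphism $u^\theta=\{u_t^\theta\}_{t\geq0}$ from $H^\theta$ onto $\tilde{H}^\theta$ preserves units, so $\{u_t^\theta\xi(t)\}_{t\geq0}$ is again a unit, and since each $u_t^\theta$ is an $M$-bilinear unitary it carries $\mathcal{D}(\mathcal{H}_t^\theta;\phi)$ onto $\mathcal{D}(\tilde{\mathcal{H}}_t^\theta;\phi)$. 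It therefore suffices to prove everything in $\tilde{H}^\theta$, identifying $\xi(t)$ with $u_t^\theta\xi(t)\in L^2(M)$. First I would pin down the $\phi$-bounded vectors: as the right $M$-action on $\tilde{\mathcal{H}}_t^\theta=L^2(M)$ is the standard one and $\phi$ is faithful (so $\overline{\phi^{\frac{1}{2}}M}=L^2(M)$ by the remark in Section \ref{Preliminaries}), the map $\pi_\phi(\xi(t))\colon\phi^{\frac{1}{2}}x\mapsto\xi(t)x$ is a bounded operator on $L^2(M)$ commuting with right multiplication by $M$, hence is left multiplication by a unique $a_t\in M$; evaluating at $\phi^{\frac{1}{2}}$ gives $\xi(t)=a_t\phi^{\frac{1}{2}}$, and faithfulness of $\phi$ forces the uniqueness of $a_t$. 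This settles the existence--uniqueness assertion.

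Next I would make the structure maps $U_{s,t}$ of $\tilde{H}^\theta$ explicit, which is the step I expect to need the most care. The same computation as above yields $\pi_\phi(a_1\phi^{\frac{1}{2}})^*\pi_\phi(a_2\phi^{\frac{1}{2}})=a_1^*a_2$ as an element of $M$, and this scalar acts on the second tensor leg $\tilde{\mathcal{H}}_t^\theta$ through its \emph{twisted} left action, i.e.\ via $\theta_t$. Using the inner-product formula for the relative tensor product one then computes
\[
\langle (a_1\phi^{\frac{1}{2}})\phi^{-\frac{1}{2}}(b_1\phi^{\frac{1}{2}}),\ (a_2\phi^{\frac{1}{2}})\phi^{-\frac{1}{2}}(b_2\phi^{\frac{1}{2}})\rangle=\phi\bigl(b_1^*\,\theta_t(a_1^*a_2)\,b_2\bigr),
\]
which matches the inner product of $\theta_t(a_1)b_1\phi^{\frac{1}{2}}$ and $\theta_t(a_2)b_2\phi^{\frac{1}{2}}$ in $\tilde{\mathcal{H}}_{s+t}^\theta$ (here one uses that $\theta_t$ is a $*$-homomorphism). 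Hence the assignment $(a\phi^{\frac{1}{2}})\phi^{-\frac{1}{2}}(b\phi^{\frac{1}{2}})\mapsto\theta_t(a)b\phi^{\frac{1}{2}}$ is an isometry; it is plainly $M$-bilinear and its range is dense, so it is unitary onto $\tilde{\mathcal{H}}_{s+t}^\theta$, and it sends $\phi^{\frac{1}{2}}\phi^{-\frac{1}{2}}\phi^{\frac{1}{2}}$ to $\phi^{\frac{1}{2}}$. Thus it is precisely the product-system unitary $U_{s,t}$ (the unit $\tilde{\Xi}^\theta$, corresponding to $a_t=1$, confirms the normalization).

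Finally I would apply the defining relation of a unit. With $\xi(s)=a_s\phi^{\frac{1}{2}}$ and $\xi(t)=a_t\phi^{\frac{1}{2}}$, the identity $U_{s,t}(\xi(s)\phi^{-\frac{1}{2}}\xi(t))=\xi(s+t)$ becomes
\[
\theta_t(a_s)\,a_t\,\phi^{\frac{1}{2}}=a_{s+t}\,\phi^{\frac{1}{2}},
\]
and cancelling the separating vector $\phi^{\frac{1}{2}}$ (faithfulness of $\phi$) gives $a_{s+t}=\theta_t(a_s)a_t$ for all $s,t\geq0$. This is exactly the defining relation $w_{s+t}=\theta_t(w_s)w_t$ of a right cocycle, so $\{a_t\}_{t\geq0}$ is a right cocycle for $\theta$, completing the proof.
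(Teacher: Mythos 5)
Your proof is correct, and at bottom it runs on the same fuel as the paper's: identify the unit vectors as $a_t\phi^{\frac{1}{2}}$, use the (twisted) bimodule structure of $\tilde{H}^\theta$ and the relative tensor product inner-product formula, and finish with faithfulness of $\phi$. The differences are in execution and completeness. The paper's own proof is four lines and only treats the cocycle identity: it takes the existence and uniqueness of $a_t$ for granted (you supply the missing justification, namely that $\pi_\phi(\xi(t))$ is a bounded right $M$-linear operator on $L^2(M)$, hence left multiplication by a unique element of $M$), it silently uses the explicit action $U_{s,t}\colon (a_s\phi^{\frac{1}{2}})\phi^{-\frac{1}{2}}(a_t\phi^{\frac{1}{2}})\mapsto \theta_t(a_s)a_t\phi^{\frac{1}{2}}$ (which you verify by the $\theta_t$-twisted computation of $\pi_\phi(\xi_1)^*\pi_\phi(\xi_2)$), and then, instead of cancelling the separating vector as you do, it establishes the two identities $\phi(a_{s+t}^*y)=\phi(a_{s+t}^*a_{s+t})$ and $\phi(y^*a_{s+t})=\phi(y^*y)$ for $y=\theta_t(a_s)a_t$ and deduces $y=a_{s+t}$ from faithfulness --- a small polarization detour that your direct cancellation renders unnecessary. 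One spot you should tighten: the sentence asserting your isometry ``is precisely the product-system unitary $U_{s,t}$'' deserves one more line, since the paper never actually writes down the structure maps of $\tilde{H}^\theta$. Either take your formula as their (only sensible) definition, your isometry computation then being the verification that $\tilde{H}^\theta$ is indeed a product system; or note that the normalization does pin the map down: if $V$ and $V'$ are right $M$-linear unitaries onto $\tilde{\mathcal{H}}_{s+t}^\theta=L^2(M)$ agreeing at the single vector $\phi^{\frac{1}{2}}\phi^{-\frac{1}{2}}\phi^{\frac{1}{2}}$, then $V'V^*$ is a right $M$-linear unitary of $L^2(M)$ fixing $\phi^{\frac{1}{2}}$, hence left multiplication by an element of $M$ fixing a separating vector, hence the identity, so $V=V'$. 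With that remark added, your argument is a complete proof, and in fact covers the existence--uniqueness assertion that the paper's proof omits.
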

\begin{proof}
Fix $s,t\geq0$. Since we have
\begin{eqnarray*}
&&\phi(a_{s+t}^*(\theta_t(a_s)a_t))=\langle
a_{s+t},U_{s,t}((a_s\phi^\frac{1}{2})\phi^{-\frac{1}{2}}(a_t\phi^\frac{1}{2}))\rangle=\phi(a_{s+t}^*a_{s+t}),\\
&&\phi((\theta_t(a_s)a_t)^*a_{s+t})=\langle
U_{s,t}((a_s\phi^\frac{1}{2})\phi^{-\frac{1}{2}}(a_t\phi^\frac{1}{2})),U_{s,t}(\xi(s)\phi^{-\frac{1}{2}}\xi(t))\rangle=\phi((\theta_t(a_s)a_t)^*\theta_t(a_s)a_t)
\end{eqnarray*}
and $\phi$ is faithful, the equation $\theta_t(a_s)a_t=a_{s+t}$ holds.
\end{proof}
\begin{exam}
Let $\theta$ be an E$_0$-semigroup on a {\rm
II}$_1$ factor $M$ and $\Xi$ be a unit of the product system $H^\theta$ of W$^*$-bimodules associated with $\theta$. For each $t\geq0$, we define an operator $X_t^\Xi\in\mathcal{B}(L^2(M))$ by $X_t^\Xi(x\phi^\frac{1}{2})=\theta_t(x)a_t\phi^\frac{1}{2}$ for each $x\in
M$, where $\{a_t\}_{t\geq0}$ is the right cocycle for $\theta$ associated with $\Xi$ in {\rm
Proposition \ref{unitcocycle}}. Then, the family $X^\Xi=\{X_t^\Xi\}_{t\geq0}$ is a unit of $\theta$, that is, $X^\Xi$ is a semigroup satisfying $X_t^\Xi
x=\theta_t(x)X_t^\Xi$ for all $t\geq0$.
\end{exam}

\section*{Acknowledgments}
The author would like to express deeply gratitude to Shigeru Yamagami for his helpful comments. He also would like to thank Raman Srinivasan for a discussion. He is grateful to Yoshimichi Ueda for giving valuable informations and a chance of the discussion. He also is grateful to Yuhei Suzuki.


\end{document}